\documentclass[a4paper,12pt]{amsart}
\usepackage{amsthm,amssymb}
\usepackage{stmaryrd}
\usepackage[T1]{fontenc}
\usepackage[utf8]{inputenc}
\usepackage{enumitem}
\usepackage{mathrsfs}
\usepackage{dsfont}
\usepackage{mathtools}
\usepackage{scalerel,stackengine}
\usepackage{nicefrac}
\usepackage{enumitem}
\usepackage[margin=3cm]{geometry}
\usepackage[numbers, sort&compress]{natbib}
\usepackage{caption}
\usepackage{xspace}

\usepackage[all]{xy}
\numberwithin{equation}{section}

\usepackage[hypertexnames=false]{hyperref}
\hypersetup{
    colorlinks,
    linkcolor={red!90!black},
    citecolor={green!70!black},
    urlcolor={b<}
    }

\usepackage{tikz}
\usepackage{tikz-cd}
\usetikzlibrary{calc,hobby}
\usetikzlibrary{positioning}
\usetikzlibrary{backgrounds}
\usetikzlibrary{patterns}

\makeatletter % we
\DeclareSymbolFont{lettersA}{U}{txmia}{m}{it}
\SetSymbolFont{lettersA}{bold}{U}{txmia}{bx}{it}
\DeclareFontSubstitution{U}{txmia}{m}{it}

\DeclareMathSymbol{\m@thbbch@rN}{\mathord}{lettersA}{142}
\DeclareMathSymbol{\m@thbbch@rR}{\mathord}{lettersA}{146}
\long\def\DoLongFutureLet #1#2#3#4{%
   \def\@FutureLetDecide{#1#2\@FutureLetToken
      \def\@FutureLetNext{#3}\else
      \def\@FutureLetNext{#4}\fi\@FutureLetNext}
   \futurelet\@FutureLetToken\@FutureLetDecide}
\def\DoFutureLet #1#2#3#4{\DoLongFutureLet{#1}{#2}{#3}{#4}}
\def\@EachCharacter{\DoFutureLet{\ifx}{\@EndEachCharacter}%
   {\@EachCharacterDone}{\@PickUpTheCharacter}}
\def\m@keCharacter#1{\csname\F@ntPrefix#1\endcsname}
\def\@PickUpTheCharacter#1{\m@keCharacter{#1}\@EachCharacter}
\def\@EachCharacterDone \@EndEachCharacter{}

\DeclareRobustCommand*{\varmathbb}[1]{\gdef\F@ntPrefix{m@thbbch@r}%
  \@EachCharacter #1\@EndEachCharacter}
\makeatother

\newcommand{\ca}[1]{\mathcal{#1}}
\newcommand{\mfr}{\mathfrak}
\newcommand{\ms}[1]{\mathscr{#1}}

\newcommand{\calC}{\ca{C}}

\newcommand{\calF}{\ca{F}}
\newcommand{\calG}{\ca{G}}

\newcommand{\calM}{\ca{M}}

\newcommand\frB{\mfr{B}}

\newcommand\frS{\mfr{S}}

\newcommand{\scrK}{\ms{K}}
\newcommand{\scrM}{\ms{M}}

\newcommand{\scrS}{\ms{S}}

\newcommand{\scrZ}{\ms{Z}}

\newcommand{\KS}{\scrK_{\scrS}} %ultracontact from stack
\newcommand{\SK}{\scrS_{\scrK}} %stack from ultracontact
\newcommand{\Smin}{\scrS_{\min}}
\newcommand{\Smax}{\scrS_{\max}}
\newcommand{\Kmin}{\ensuremath{{\scrK_{\min}}}\xspace}
\newcommand{\Kmax}{\ensuremath{{\scrK_{\max}}}\xspace}

\newcommand{\suchthat}{\,\middle\vert\,}

\newcommand{\zero}{\mathbf{0}} %zero of an algebra
\newcommand{\one}{\mathbf{1}} %unity of an algebra
\newcommand{\iffslim}{\longleftrightarrow}
\newcommand{\defeq}{\coloneqq} %requires mathtools package

\newcommand{\iffdef}{\;\mathrel{\mathord{:}\mathord{\longleftrightarrow}}\;}
\newcommand\dftt{\mathtt{df}\,}
\newcommand{\powerne}[1]{\ensuremath{2^{#1}_{+}}}
\newcommand\klam[1]{\left\langle#1\right\rangle}

\DeclareMathOperator{\Int}{Int}
\DeclareMathOperator{\RC}{RC}

\DeclareMathOperator{\con}{\mathsf{C}}
\newcommand{\Kon}{\con_{\mathscr{K}}}

\newcommand\mathbackslash{\raisebox{.4pt}{\texttt{/}}}
\def\notcon{% the separation relation
  \renewcommand\stacktype{L}\mathrel{\ensurestackMath{%
  \ThisStyle{\stackon[0pt]{\SavedStyle\con}{\SavedStyle\mathbackslash}}}}%
}

\DeclareMathOperator{\Cl}{Cl}

\DeclareMathOperator{\upop}{\uparrow} %upper arrow operator
\DeclareMathOperator{\Upop}{\Uparrow} %upper arrow operator for support
\DeclareMathOperator{\downop}{\downarrow} %lower arrow operator

\newcommand{\Nat}{\varmathbb{N}} %the set of natural numbers
\newcommand{\Real}{\varmathbb{R}} %the set of reals

\DeclareMathOperator{\Fil}{\mathsf{Fil}} %the family of filters
\DeclareMathOperator{\Filp}{\mathsf{Fil^{\star}}} %the family of proper filters
\DeclareMathOperator{\Id}{\mathsf{Id}} %the family of ideals
\DeclareMathOperator{\Idp}{\mathsf{Id}^{\star}} %the family of proper ideals
\DeclareMathOperator{\Ult}{\mathsf{Ult}} %all ultrafilters of the algebra
\DeclareMathOperator{\Grill}{\mathsf{Gr}} %the family of grills
\DeclareMathOperator{\Stack}{\mathsf{Stack}}
\DeclareMathOperator{\UC}{\mathsf{UC}} %the collection of all ultracontacts of a BA
\DeclareMathOperator{\STS}{\mathsf{SS}} %the collection of all stack systems of a BA
\DeclareMathOperator{\SC}{\mathsf{S}} %the collection of all simplicial complexes

\DeclareMathOperator{\Fin}{Fin}
\DeclareMathOperator{\FC}{FC}
\DeclareMathOperator{\Atom}{At}
\newcommand{\Bne}{B^{+}} %non-zero elements of the algebra B

\newcommand{\tand}{\text{ and }}
\newcommand{\qtand}{\quad\text{and}\quad}

\newcommand{\set}[1]{\ensuremath{\{#1\}}}

\newcommand{\ua}[1]{\ensuremath{\mathop{\uparrow}#1}}
\newcommand{\da}[1]{\ensuremath{\mathop{\downarrow}#1}}

\newcommand{\QED}{\hfill$\dashv$}

\newtheorem{theorem}{Theorem}[section]
\newtheorem{lemma}[theorem]{Lemma}
\newtheorem{proposition}[theorem]{Proposition}
\newtheorem{corollary}[theorem]{Corollary}

\newtheoremstyle{mytheoremstyle} % name
    {1em plus .2em minus .1em}                    % Space above
    {1em plus .2em minus .1em}                    % Space below
    {\rmfamily}                   % Body font
    {}                           % Indent amount
    {\bfseries}                   % Theorem head font
    {.}                          % Punctuation after theorem head
    {.5em}                       % Space after theorem head
    {}  % Theorem head spec (can be left empty, meaning ‘normal’)

\theoremstyle{mytheoremstyle}

\newtheorem{definition}[theorem]{Definition}
\newtheorem{example}[theorem]{Example}
\newtheorem{remark}[theorem]{Remark}

{\medskip\par\noindent\color{magenta}{RG: \  }}{\hfill{$\square$}\color{black}\par\medskip}

\title{ultracontact Algebras and Stack Systems}

\author[]{Luca Carai, Ivo D\"untsch, Rafa\l\ Gruszczy\'nski, Anna Laura Suarez}

\date{}

\address{Luca Carai\\
Department of Mathematics\\
University of Milan\\
Italy\\
\textsc{Orcid:} 0000-0001-9545-2365
}

\email{luca.carai.uni@gmail.com}

\urladdr{https://lucacarai.github.io/}

\address{Ivo D\"untsch\\
Department of Computer Science\\
Brock University\\	
St Ca\-tha\-ri\-nes, Ontario\\
Canada\\
\textsc{Orcid:} 0000-0001-8907-2382}

\email{duentsch@brocku.ca}

\urladdr{https://www.cosc.brocku.ca/~duentsch/}

\address{Rafa\l\ Gruszczy\'nski\\
Department of Logic\\
Nicolaus Copernicus University in Toru\'n\\
Poland,
\textsc{Orcid:} 0000-0002-3379-0577%
}

\email{gruszka@umk.pl}
\urladdr{www.umk.pl/~gruszka}

\address{Anna Laura Suarez,\\
Department of Mathematics and Applied Mathematics\\
University of the Western Cape\\
Cape Town\\
South Africa,
\textsc{Orcid:} 0000-0001-5878-415X%
}

\email{annalaurasuarez993@gmail.com}

\begin{document}

\begin{abstract}

We study the class of structures that, in a way, generalize various approaches to the contact relation on Boolean algebras.

\smallskip

\noindent \textsc{Keywords}: contact relation, ultracontact, Boolean algebras, Boolean contact algebras, region-based topology, spatial logic

\smallskip

\noindent \textsc{MSC:} Primary 06E25, Secondary 03G05

\end{abstract}

\maketitle

\section{Introduction}

Boolean contact algebras (BCAs) are structures meant to study the phenomenon of nearness of regions in an abstract setting \citep{Stell-BCAANATRCC,Duntsch-Winter-CBCA,Dimov-et-al-CARBTSPA1}. As it is normally done, the standard language of Boolean algebras is expanded with a~binary predicate `$\con$' of \emph{contact} and we interpret $x\con y$ as the situation in which---in some sense---region $x$ is in contact with region $y$. In a point-based setting, we would say that $x$ and $y$ share at least one point. The standard BCAs are subalgebras of the complete algebras of regular closed subsets of topological spaces. In this context, $x\con y$ if $x\cap y$ (the set-theoretical intersection) is non-empty. If $X$ is a topological space, $\RC(X)$ is its complete Boolean algebra of regular closed sets, $S$ is a subalgebra of $\RC(X)$, and $x,y\in S$, then $\cap$ usually is not an operation of $S$, but the operation of the power set algebra of $\RC(X)$. So the nearness of $x$ and $y$ is expressed by means of an operation from beyond $S$. In the abstract setting, $\con$ is assumed as primitive, and the standard axioms for it are listed in Section~\ref{sec:C-from-UC}.

In this work, we aim to study an expansion of the notion of contact relations from pairs of regions to arbitrary collections. That is, we can easily imagine a ternary relation $\con_3$ that holds among $x$, $y$, and $z$ if all three regions are in contact in the sense that there is a~location in space common to all three. Transferring the intuition to the point-based environment again, it reduces to the existence of at least one point that is in $x\cap y\cap z$.
The ternary relation $\con_3$ allows one to distinguish between the two configurations shown in Figure~\ref{fig:three regions}, which are indistinguishable from the point of view of binary contact relations, since each pair of regions among $x,y,z$ shares a common point

\begin{figure}[ht]
    \centering
\begin{tikzpicture}
  \def\r{2}
  \fill[pattern=north east lines]
    (0,0) -- (90:\r) arc[start angle=90, end angle=210, radius=\r] -- cycle;
  \fill[pattern=crosshatch]
    (0,0) -- (210:\r) arc[start angle=210, end angle=330, radius=\r] -- cycle;
  \fill[pattern=dots]
    (0,0) -- (330:\r) arc[start angle=330, end angle=450, radius=\r] -- cycle;
  \draw (0,0) circle (\r);
  \draw (0,0) -- (90:\r);
  \draw (0,0) -- (210:\r);
  \draw (0,0) -- (330:\r);
  \node at (270:2.5) {$x$};
  \node at (30:2.5) {$y$};
  \node at (150:2.5) {$z$};
\end{tikzpicture}
\hspace{2.5cm}
\begin{tikzpicture}
  \def\ro{2}
  \def\ri{0.8}
  \fill[pattern=north east lines]
    (90:\ri) -- (90:\ro)
    arc[start angle=90, end angle=210, radius=\ro] --
    (210:\ri)
    arc[start angle=210, end angle=90, radius=\ri] -- cycle;
  \fill[pattern=crosshatch]
    (210:\ri) -- (210:\ro)
    arc[start angle=210, end angle=330, radius=\ro] --
    (330:\ri)
    arc[start angle=330, end angle=210, radius=\ri] -- cycle;
  \fill[pattern=dots]
    (330:\ri) -- (330:\ro)
    arc[start angle=330, end angle=450, radius=\ro] --
    (450:\ri)
    arc[start angle=450, end angle=330, radius=\ri] -- cycle;
  \draw (0,0) circle (\ro);
  \draw (0,0) circle (\ri);
  \draw (90:\ri) -- (90:\ro);
  \draw (210:\ri) -- (210:\ro);
  \draw (330:\ri) -- (330:\ro);
    \node at (270:2.5) {$x$};
  \node at (30:2.5) {$y$};
  \node at (150:2.5) {$z$};
\end{tikzpicture}
    \caption{Configurations that are indistinguishable by binary contact relations}
    \label{fig:three regions}
\end{figure}
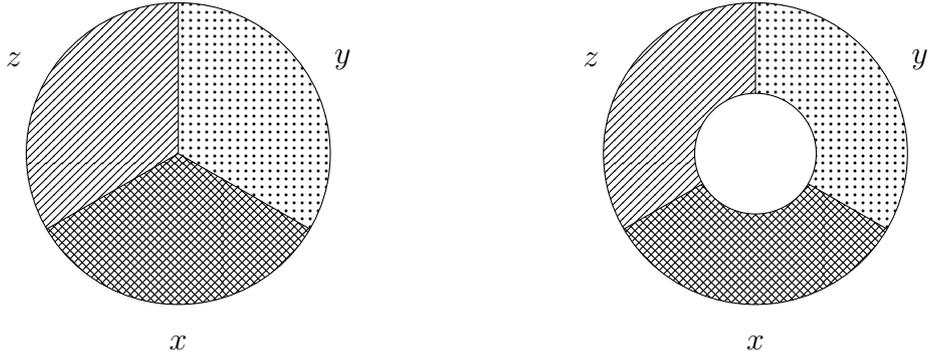
This idea can be further generalized to arbitrary $n$-ary contact relations, as it was done in \cite{Lipparini-HS}. However, we aim at something more far-reaching, and in a~way we want to study in one swoop all those possibilities, including infinite ones, by considering a family $\scrK$ of subsets of a Boolean algebra $B$, such that every set $M\in\scrK$ consists of regions sharing a~common location, or having <<a~mutual point of contact>>.
Allowing $\scrK$ to contain infinite subsets of $B$ makes it possible, for example, to express that the family $\{ (0, \frac{1}{n}]\mid n \in \omega \}$ of subsets of the real interval $(0,1]$ lacks a common point of contact; a property that cannot be captured by finitary contact relations.
Thus, we propose to study structures $\frB\defeq\klam{B,\scrK}$ where $\scrK$ is a family of subsets of $B$, which we are going to call \emph{ultracontact algebras}. In Section~\ref{sec:future} we point to possible ways to continue our work.

Throughout the paper, $\klam{B,+, \cdot, -, \zero,\one}$ will denote a non-trivial Boolean algebra, with the operations---respectively---of join, meet, and complement. On $B$ we define, in the standard way, the binary order relation $\leq$. $\Bne$ is the set of all non-zero elementts of $B$. If $M\subseteq B$, then $x\in B$ is a \emph{lower bound} of $M$ if for all $y\in M$, $x\leq y$. To simplify things, we write $x\leq M$ in this case. Further, if $x\in B$, then
\begin{equation*}%\tag{$\dftt{\upop}$}
    \upop x\defeq\{y\in B\mid x\leq y\},
\end{equation*}
and for $M\subseteq B$
\[
\upop M\defeq\bigcup_{x\in M}\upop x.
\]
If $B$ and $S$ are Boolean algebras, then we write $S\leq B$ in the case $S$ is a Boolean subalgebra of $B$. $\Atom(B)$ is the collection of all atoms of the algebra $B$, i.e., all its non-zero elements minimal with respect to the Boolean order $\leq$.

For any set $X$, $2^X$ is the power set of $X$, and $\powerne{X}$ are all non-empty subsets of~$X$; $\Fin(X)$ is the collection of all finite subsets of $X$.

For any unexplained notion we refer the reader to \cite{kop89} for Boolean algebras and \cite{bd74} for lattice theory.

The paper is organized as follows. In Section~\ref{sec:sfg}, we recall the properties of stacks, filters, and grills. In Section~\ref{sec:support} we introduce and study the relation of \emph{support} that plays a key role in the presentation of the central concept of our work, \emph{ultracontact algebra}, which is introduced in Section~\ref{sec:UCA}. In the same section, we show that the family of ultracontacts on a Boolean algebra is a complete co-Heyting algebra. In Section~\ref{sec:new-UCs} we show how to construct new ultracontacts from old ones, and in Section~\ref{sec:C-from-UC} we demonstrate that the standard contact, and lesser-known hypercontact are definable in the framework of ultracontact algebras. Section~\ref{sec:fin-UCs-and-SCs} draws a parallel between finite ultracontact algebras and abstract simplicial complexes well-known from algebraic geometry. In Section~\ref{sec:stacks} we prove that the notion of \emph{ultracontact algebra} is definitionally equivalent to the purely order-theoretic notion of a \emph{stack system}.

\section{Stacks, filters, grills}\label{sec:sfg}

This section is a refresher on various order-related subsets of Boolean algebras.

\begin{definition}\label{def:Stack}
    A \emph{stack} of $B$ is an upward closed subset $U$ of $B$ w.r.t. the Boolean order. The family of stacks of $B$ is denoted by $\Stack(B)$.
\end{definition}

\begin{proposition}[{\citealp[Theorem 8.2]{Harzheim-OS}}]\label{thm:stacklattice}
    $\klam{\Stack(B),\mathord{\cup},\mathord{\cap},\emptyset,B}$ is a~complete lattice, with arbitrary unions and intersections as lattice operations and $\set{\one}$ as its sole atom.
\end{proposition}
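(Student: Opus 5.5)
To prove the statement, I would check directly that $\Stack(B)$ is closed under arbitrary unions and arbitrary intersections. Then I would identify the bottom and top elements, and finally determine the atoms.

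\emph{Closure.} Let $\{U_i\}_{i\in I}\subseteq\Stack(B)$, and suppose $x\in\bigcup_i U_i$ and $x\leq y$. Then $x\in U_j$ for some $j$, so $y\in U_j$ because $U_j$ is upward closed; hence $y\in\bigcup_i U_i$. Similarly, if $x\in\bigcap_i U_i$ and $x\leq y$, then $y\in U_i$ for every $i$, so $y\in\bigcap_i U_i$. For $I=\emptyset$ we use the conventions $\bigcup\emptyset=\emptyset$ and $\bigcap\emptyset=B$, and both sets are trivially upward closed. So $\Stack(B)$ is a complete sublattice of $\klam{2^B,\cup,\cap}$. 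The joins and meets in $\Stack(B)$ are therefore the set-theoretic unions and intersections, the least element is $\emptyset$, and the greatest element is $B$.

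\emph{Atoms.} First, $\set{\one}=\upop\one$ is a stack. It is non-empty, and its only proper subset is $\emptyset$, so it is an atom. Conversely, let $U$ be any non-empty stack and pick $x\in U$. Since $x\leq\one$, upward closure gives $\one\in U$, so $\set{\one}\subseteq U$. Hence every non-empty stack lies above $\set{\one}$. If $U$ is an atom, minimality forces $U=\set{\one}$. So $\set{\one}$ is the sole atom, and the lattice is in fact atomic with this unique atom.

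None of these steps is hard. The only point that needs a little care is the empty family, where we must remember that the empty intersection is taken inside $B$ and equals $B$.
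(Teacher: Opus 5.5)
Your proof is correct and complete. Closing upward-closed sets under arbitrary unions and intersections, with $\bigcap\emptyset=B$ taken inside $B$, makes $\Stack(B)$ a complete sublattice of $2^B$ with bottom $\emptyset$ and top $B$. Your observation that every non-empty stack contains $\one$ shows that $\set{\one}=\upop\one$ lies below every non-zero element, so it is the unique atom.

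The paper gives no argument of its own here; it only cites Harzheim (Theorem 8.2). Your direct verification is exactly the routine check that the citation stands in for, and it makes the statement self-contained.
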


\begin{definition}\label{def:filter}
    A subset $F$ of $B$ is called a~\emph{filter} if it satisfies
    \begin{enumerate}[label=(Fil\arabic*),ref=Fil\arabic*)]
\addtocounter{enumi}{-1}
    \item $\one\in F$.
    \item\label{Filt1} If $a \in F$ and $a \leq b$, then $b \in F$.
    \item If $a,b \in F$, then $a\cdot b \in F$.
\end{enumerate}
\end{definition}
$F$ is called \emph{proper}, if $\zero\not\in F$. The collection of filters is denoted by $\Fil(B)$, and the collection of proper filters by $\Filp(B)$. A maximal element of $\Filp(B)$ with respect to set inclusion is called an \emph{ultrafilter}. We denote the collection of ultrafilters by $\Ult(B)$. The following description of ultrafilters is well known, see e.g. \cite[Proposition 2.15]{kop89}:
\begin{theorem}\label{thm:uf}
    For every $F\in\Filp(B)$ the following conditions are equivalent:
    \begin{enumerate}[label=(\arabic*)]
        \item $F\in\Ult(B)$.
       \item $F$ is prime, i.e., for any $a,b\in B$, if $a + b\in F$, then $a\in F$ or $b\in F$.
       \end{enumerate}
\end{theorem}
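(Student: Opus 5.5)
We prove the two implications separately, working throughout with a fixed proper filter $F$.

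For (1)$\Rightarrow$(2), let $F\in\Ult(B)$ and suppose $a+b\in F$ but $a\notin F$ and $b\notin F$. We first show that for any $c\notin F$ we have $-c\in F$. Consider the set
\[
F_c\defeq\{x\in B\mid \exists f\in F\ (f\cdot c\leq x)\}.
\]
This is a filter containing $F\cup\{c\}$: it contains $\one$, is upward closed, and is closed under meets because $(f\cdot c)(g\cdot c)=(fg)\cdot c$ and $F$ is closed under meets. Since $c\notin F$, $F_c$ strictly extends $F$. By maximality of $F$ in $\Filp(B)$, $F_c$ cannot be proper, so $\zero\in F_c$. Hence there is $f\in F$ with $f\cdot c=\zero$, i.e.\ $f\leq -c$, and therefore $-c\in F$ by (Fil1).

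Applying this to $a$ and to $b$ gives $-a,-b\in F$, so $-a\cdot -b=-(a+b)\in F$. Together with $a+b\in F$ this yields $\zero=(a+b)\cdot-(a+b)\in F$, contradicting properness.

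For (2)$\Rightarrow$(1), let $F$ be a prime proper filter and let $G\supseteq F$ be a proper filter. We want $G=F$. Take $x\in G$. Since $x+-x=\one\in F$, primeness gives $x\in F$ or $-x\in F$. If $-x\in F\subseteq G$, then $\zero=x\cdot-x\in G$, contradicting properness of $G$. Thus $x\in F$, so $G\subseteq F$. Hence $F$ is maximal in $\Filp(B)$, i.e.\ $F\in\Ult(B)$.

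The only step with real content is the auxiliary claim in the first direction, that an ultrafilter decides every element ($c\notin F$ implies $-c\in F$). This rests on verifying that the generated filter $F_c$ is a filter, which is routine. The remaining steps are direct applications of the filter axioms (Fil0)–(Fil2) and De Morgan's law.
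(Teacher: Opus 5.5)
Your proof is correct in both directions. The key auxiliary claim, that a maximal proper filter omitting $c$ must contain $-c$, is properly justified via the filter $F_c$ generated by $F\cup\{c\}$. The paper gives no proof of its own here; it only cites the result as well known (Koppelberg's handbook, Proposition 2.15), and your argument is the standard textbook proof of that fact.
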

The dual notion of a filter is that of an ideal:
\begin{definition}\label{def:ideal}
    A subset $I$ of $B$ is an \emph{ideal} if:
    \begin{enumerate}[label=(Id\arabic*),ref=Id\arabic*)]
    \addtocounter{enumi}{-1}
        \item $\zero\in I$.
        \item If $a\in I$ and $b\leq a$, then $b\in I$.
        \item If $a\in I$ and $b\in I$, then $a+b\in I$.
    \end{enumerate}
   An ideal is called \emph{proper} if $\one\notin I$. We let $\Id(B)$ be the set of all ideals of $B$,  and $\Id^{\star}(B)$ be the family of all proper ideals of~$B$.
\end{definition}

\begin{theorem}[{\citealp[Corollary II.9.4]{bd74}}]\label{thm:latticeideal}
The structure $\klam{\Id(B), \bigwedge, \bigvee, \set{\zero}, B}$ is a complete lattice where infimum is intersection and for $\set{I_n: n \in J}$, $\bigvee \set{I_n: n \in J}$ is the intersection of all ideals containing $\bigcup \set{I_n: n \in J}$.
\end{theorem}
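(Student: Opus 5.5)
The plan is a direct verification that $\Id(B)$, ordered by inclusion, is a complete lattice with the stated operations. The key tool is that $\Id(B)$ is closed under arbitrary intersections, including the empty one. Everything else then follows from the standard fact that a family of subsets closed under arbitrary intersections forms a complete lattice (a closure system).

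First, I check that the intersection of any nonempty family $\set{I_n: n\in J}$ of ideals is an ideal. Each $I_n$ contains $\zero$, so the intersection does too. If $a$ lies in every $I_n$ and $b\leq a$, then $b$ lies in every $I_n$ by (Id1). If $a,b$ lie in every $I_n$, then so does $a+b$ by (Id2). The empty intersection is taken to be $B$, which is trivially an ideal and is the top element. The bottom element is $\set{\zero}$: it is an ideal because $b\leq\zero$ forces $b=\zero$ and $\zero+\zero=\zero$, and it is contained in every ideal by (Id0).

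Next, the intersection is the infimum in $\langle \Id(B),\subseteq\rangle$. It is contained in each $I_n$. Any ideal $K$ contained in every $I_n$ is also contained in $\bigcap_n I_n$.

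For the supremum, let $\hat I$ be the intersection of all ideals containing $\bigcup_n I_n$. This family is nonempty because $B$ belongs to it, and $\hat I$ is an ideal by the first step. It contains each $I_n$, so it is an upper bound. Any ideal $K$ that is an upper bound contains $\bigcup_n I_n$, so $K$ is one of the ideals being intersected, and therefore $\hat I\subseteq K$. Hence $\hat I$ is the least upper bound, which is exactly the description of $\bigvee$ in the statement.

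There is no real obstacle here. The only point needing care is the degenerate cases. For $J=\emptyset$, the formula should give $\bigvee\emptyset=\set{\zero}$: the ideals containing $\emptyset$ are all ideals, and their intersection is $\set{\zero}$. Correspondingly, $\bigwedge\emptyset=B$. Optionally, I would add the more concrete description of the join: $\hat I$ is the set of all $b$ with $b\leq a_1+\dots+a_k$ for some $k\geq 0$ and $a_i\in\bigcup_n I_n$. This is not needed for the statement as given.
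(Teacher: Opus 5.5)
Your argument is correct. You show that $\Id(B)$ is closed under arbitrary intersections (with $\bigcap\emptyset=B$), so it is a closure system; infimum-as-intersection and supremum-as-least-ideal-containing-the-union then follow exactly as you verify, including the degenerate case $J=\emptyset$. The paper gives no proof of its own here and simply cites Balbes and Dwinger, and your closure-system verification is the standard proof of this fact, so there is nothing to add.
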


Note that the smallest ideal containing $\emptyset \neq M \subseteq B$ is given by
\begin{gather}\label{genid}
\left\{b \in B\suchthat b \leq \bigvee T \text{ for some finite } T \subseteq M\right\}.
\end{gather}

The following concept is of major importance in the sequel:
\begin{definition}[{\citealp{Choquet-SLNDFEDG}}]\label{def:Grill}
A proper subset $G$ of $B$ is called a \emph{grill}, if it is a non-empty stack and $a+b \in G$ implies $a \in G$ or $b \in G$. More concretely,  $G$ is a grill if it satisfies
\begin{enumerate}[label=(Gr\arabic*),ref=Gr\arabic*]
\addtocounter{enumi}{-1}
    \item\label{Gr0} $\one\in G$ and $\zero\notin G$.
    \item\label{Gr1} If $a \in G$ and $a \leq b$, then $b \in G$.
    \item\label{Gr2} If $a+b \in G$, then either $a \in G$ or $b \in G$.
\end{enumerate}
\end{definition}
The family of grills of $B$ is denoted by  $\Grill(B)$. There is an intimate connection between grills and ultrafilters:
\begin{theorem}[{\citealp{Thron-PSAG}}]\label{thm:ultragrill}
Every grill of $B$ is a union of ultrafilters and every union of ultrafilters is a grill.
\end{theorem}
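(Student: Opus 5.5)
The statement has two directions. For the first, we show that a grill $G$ is the union of the ultrafilters contained in it. For the second, we show directly that a union of ultrafilters satisfies (Gr0)--(Gr2).

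\emph{Unions of ultrafilters are grills.} Let $\calF\subseteq\Ult(B)$ and put $G\defeq\bigcup\calF$. We will assume $\calF\neq\emptyset$; otherwise $G=\emptyset$, and we read the statement as concerning non-empty families, since $\one\in G$ is required. Then $\one\in G$ because every ultrafilter contains $\one$. Also $\zero\notin G$ because ultrafilters are proper. Upward closure holds because each member of $\calF$ is a stack and unions of stacks are stacks (Proposition~\ref{thm:stacklattice}). For (Gr2), if $a+b\in G$ then $a+b\in U$ for some $U\in\calF$. Since $U$ is prime by Theorem~\ref{thm:uf}, we get $a\in U$ or $b\in U$, hence $a\in G$ or $b\in G$.

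\emph{Every grill is a union of ultrafilters.} Let $G$ be a grill. It suffices to show that for each $a\in G$ there is an ultrafilter $U$ with $a\in U\subseteq G$; then $G=\bigcup\{U\in\Ult(B)\mid U\subseteq G\}$. The key observation is that the dual set $I\defeq\{-x\mid x\notin G\}=\{y\mid -y\notin G\}$ is an ideal.
\begin{itemize}
\item $\zero\in I$, since $\one\in G$.
\item $I$ is downward closed: if $y\le y'\in I$, then $-y'\le -y$ and $-y'\notin G$, so $-y\notin G$ because $G$ is a stack.
\item $I$ is closed under joins: if $y,y'\in I$, then $-(y+y')=-y\cdot -y'$. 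If this element were in $G$, upward closure would put $-y$ in $G$, which is false. Hence $y+y'\in I$.
\end{itemize}
In fact, closure under joins only needs upward closure. Primeness (Gr2) is used instead to see that the complement $B\setminus G$ is an ideal: it is downward closed, and if $x,y\notin G$ then $x+y\notin G$.

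Now fix $a\in G$. The filter $\upop a$ is disjoint from the ideal $J\defeq B\setminus G$, because $G$ is upward closed and contains $a$. By the standard filter--ideal separation theorem (Zorn's lemma applied to proper filters disjoint from $J$, or equivalently the prime ideal theorem), there is an ultrafilter $U\supseteq\upop a$ with $U\cap J=\emptyset$, that is, $a\in U\subseteq G$. To see this via Zorn, take a filter $F$ maximal among filters containing $a$ and disjoint from $J$. If $F$ were not an ultrafilter, there would be some $b$ with $b\notin F$ and $-b\notin F$. By maximality, $F\cup\{b\}$ would generate a filter meeting $J$, giving $f\cdot b\in J$ for some $f\in F$. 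Similarly $f'\cdot -b\in J$ for some $f'\in F$. Then $ff'\le f b+f'(-b)\in J$, so $ff'\in J$, contradicting $F\cap J=\emptyset$. Note $F$ is proper since $\zero\in J$.

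The main obstacle is this maximality step. It depends on having $J=B\setminus G$ be an ideal, which is exactly where (Gr2) enters; everything else is routine.
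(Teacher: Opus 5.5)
The paper gives no proof of this statement; it is quoted from Thron. Your argument is the standard one and is correct in substance, but the paragraph claiming that $I=\{-x\mid x\notin G\}$ is an ideal is false and should be deleted. Since $\zero\notin G$, we get $\one=-\zero\in I$. On the other hand, $\zero\in I$ would require $\one\notin G$, so your first bullet is backwards. Your second bullet runs the monotonicity the wrong way: from $-y'\le -y$ and $-y'\notin G$, upward closure of $G$ tells you nothing about $-y$. In fact $I$ is upward closed and closed under meets, i.e.\ it is the filter dual to the ideal $B\setminus G$. For instance, for $G=\upop p$ in the four-element algebra $\{\zero,p,-p,\one\}$ one gets $I=\{p,\one\}$.

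Nothing later in your proof uses $I$. What you actually use is $J=B\setminus G$, which is indeed a proper ideal by \eqref{Gr0}--\eqref{Gr2}; this is Lemma~\ref{lem:id-fil-gr-bijection}. With $J$ the Zorn argument is sound, including the step $ff'\le fb+f'(-b)$ and the properness of the maximal filter. Your remark about the empty family is also right: $\bigcup\emptyset=\emptyset$ violates \eqref{Gr0}, so the clause \emph{every union of ultrafilters is a grill} must be read for non-empty families.

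Relative to the paper's toolkit, your maximal-filter argument is just a proof of the Grill Lemma~\ref{lem:grill} in the special case of the filter $\upop a$. With that lemma available, the direction \emph{every grill is a union of ultrafilters} is immediate: for $a\in G$, apply Lemma~\ref{lem:grill} to $\upop a\subseteq G$. Your self-contained version has the merit of making explicit where \eqref{Gr2} is needed, namely to make $J$ closed under joins.
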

The following extension result is sometimes called the \emph{Grill Lemma}:
\begin{lemma}[Grill Lemma, {\citealp[Theorem 1.1]{vak89a}}]\label{lem:grill}
     If $G$ is a grill and $F$ is a filter of $B$ such that $F\subseteq G$, then there is an ultrafilter $U\subseteq G$ such that $F\subseteq U$.
\end{lemma}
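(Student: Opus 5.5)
We prove the Grill Lemma by a maximality argument: among all filters squeezed between $F$ and $G$ we take a maximal one and show it is prime. Let
\[
\mathcal{P}\defeq\{H\in\Fil(B)\mid F\subseteq H\subseteq G\}.
\]
$\mathcal{P}$ is non-empty, since $F\in\mathcal{P}$. The union of a chain of filters is again a filter, and it stays inside $G$. So every chain in $\mathcal{P}$ has an upper bound in $\mathcal{P}$, and Zorn's Lemma gives a maximal element $U\in\mathcal{P}$. Because $\zero\notin G$ by (\ref{Gr0}), every member of $\mathcal{P}$ is proper; in particular $U\in\Filp(B)$.

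By Theorem~\ref{thm:uf}, it now suffices to show that $U$ is prime. Suppose $a+b\in U$ but $a\notin U$ and $b\notin U$. Consider the filters generated by $U\cup\{a\}$ and by $U\cup\{b\}$:
\[
U_a\defeq\{y\mid \exists u\in U\; u\cdot a\leq y\}\quad\text{and}\quad U_b\defeq\{y\mid \exists u\in U\; u\cdot b\leq y\}.
\]
Each is a filter containing $F$, and each properly extends $U$. By maximality, neither lies inside $G$. So there are $u,v\in U$ with $u\cdot a\notin G$ and $v\cdot b\notin G$, using the stack property (\ref{Gr1}) to reduce to the generators. Put $w\defeq u\cdot v\in U$. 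Since $G$ is a stack, downward monotonicity of non-membership gives $w\cdot a\notin G$ and $w\cdot b\notin G$. By (\ref{Gr2}), $(w\cdot a)+(w\cdot b)=w\cdot(a+b)\notin G$. But $w\cdot(a+b)\in U\subseteq G$, a contradiction. Hence $U$ is prime, so $U$ is an ultrafilter with $F\subseteq U\subseteq G$.

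The main obstacle is this primeness step, specifically combining the two witnesses $u$ and $v$ into the single element $w$ via the meet and then using distributivity. Everything else is a routine Zorn's Lemma argument.
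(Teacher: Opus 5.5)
Your proof is correct and complete. The Zorn's Lemma step, the observation that $a\in U_a\setminus U$ so maximality forces $U_a\not\subseteq G$, the reduction to $u\cdot a\notin G$ via \eqref{Gr1}, and the final clash between $w\cdot(a+b)\in U\subseteq G$ and \eqref{Gr2} all hold. Theorem~\ref{thm:uf} then applies because $U$ is proper.

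The paper gives no proof of its own here; it only cites Vakarelov, so there is no internal argument to compare against. The usual short route goes through Lemma~\ref{lem:id-fil-gr-bijection}: $I\defeq B\setminus G$ is a proper ideal, $F\cap I=\emptyset$, and the Boolean prime filter theorem yields an ultrafilter $U\supseteq F$ with $U\cap I=\emptyset$, that is, $U\subseteq G$.

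Your argument is exactly the proof of that separation theorem, carried out directly for the pair $(F,B\setminus G)$. The grill axioms \eqref{Gr1}--\eqref{Gr2} do the work the ideal axioms would otherwise do. What your version buys is self-containment: it needs nothing beyond Zorn's Lemma and Theorem~\ref{thm:uf}, rather than an appeal to the prime filter theorem.
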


Let
\[
F+G\defeq\{f+ g\mid f\in F\tand g\in G\}.
\]
In other words, $F+G$ is the image of the Cartesian product of $F$ and $G$ w.r.t. the Boolean join operation $+$. In consequence, $F+G$ is non-empty iff both $F$ and $G$ are non-empty.

The following characterization of grills will be used further in the paper:

\begin{proposition}\label{prop:prime-for-big-union}
 For every $G\in 2^B$, $G\in\Grill(B)$ iff for all $F,H\in2^B$, if $F+H\subseteq G$, then $F\subseteq G$ or $H\subseteq G$.
\end{proposition}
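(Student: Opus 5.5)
The forward direction will come straight from (Gr2). For the converse, the plan is to recover (Gr2) by applying the hypothesis to singletons. As I explain below, the converse only works if $G$ is additionally assumed to be a proper, non-empty stack, which I take to be the intended reading.

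\emph{Left to right.} Let $G\in\Grill(B)$ and let $F,H\subseteq B$ with $F+H\subseteq G$. Suppose, towards a contradiction, that $F\not\subseteq G$ and $H\not\subseteq G$. Pick $f\in F\setminus G$ and $h\in H\setminus G$. Then $f+h\in F+H\subseteq G$, and by (Gr2) either $f\in G$ or $h\in G$, which contradicts the choice of $f$ and $h$. The degenerate cases need no separate treatment. If $F=\emptyset$ or $H=\emptyset$, then $F+H=\emptyset$, and the empty set is trivially included in $G$.

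\emph{Right to left.} Applying the hypothesis to $F=\{a\}$ and $H=\{b\}$ gives that $a+b\in G$ implies $a\in G$ or $b\in G$, which is exactly (Gr2). The remaining conditions (Gr0) and (Gr1) cannot be obtained from the displayed condition alone.
\begin{itemize}
\item $G=\emptyset$ and $G=B$ both satisfy the condition: if $F+H\subseteq\emptyset$, then $F=\emptyset$ or $H=\emptyset$; and every $F$ is included in $B$. Neither of these sets is a grill.
\item In the four-element algebra with atoms $a$ and $-a$, the set $G=\{a\}$ also satisfies the condition. If $F+H\subseteq\{a\}$, then $F,H\subseteq\{\zero,a\}$, and $\zero$ cannot lie in both $F$ and $H$, since $\zero+\zero=\zero\notin G$. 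So $F\subseteq G$ or $H\subseteq G$. However, $\{a\}$ is not upward closed, since $a\leq\one\notin G$.
\end{itemize}
So in the converse I will assume that $G$ is a proper, non-empty stack, which is how the notion of grill was introduced in Definition~\ref{def:Grill}. Under this assumption, (Gr1) holds by definition. Non-emptiness together with upward closure gives $\one\in G$. Properness together with upward closure gives $\zero\notin G$, since otherwise $G=\upop\zero=B$. Combined with (Gr2) from the singleton instance, $G$ is a grill.

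The key step is therefore the singleton instance. The main obstacle is the bookkeeping of hypotheses: the stack, non-emptiness and properness conditions have to be carried along explicitly, because the condition on $F+H$ only encodes primeness.
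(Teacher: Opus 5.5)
Your argument is the paper's: the forward direction picks $f\in F\setminus G$ and $h\in H\setminus G$ and applies (Gr2) to $f+h$, and the converse is the singleton instance $\{x\}+\{y\}\subseteq G$. Your caveat is also correct. The paper's converse likewise recovers only (Gr2), and your examples $\emptyset$, $B$ and $\{a\}$ show that the right-to-left implication genuinely needs $G$ to be a proper non-empty stack, as in the prose of Definition~\ref{def:Grill}. The only nit is that ``$F,H\subseteq\{\zero,a\}$'' presumes both sets are non-empty, but an empty one lies in $G$ trivially.
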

\begin{proof}
    ($\Rightarrow$) Suppose $F\nsubseteq G$ and $H\nsubseteq G$, and let $f\in F\setminus G$ and $h\in H\setminus G$. Then, $f+h\in F+H$ by the definition, yet $f+h\notin G$, by~\eqref{Gr2}.

    \smallskip

    ($\Leftarrow$) If $x+y\in G$, then $\{x\}+\{y\}\subseteq G$, so $\{x\}\subseteq G$ or $\{y\}\subseteq G$.
\end{proof}

\begin{definition}
    $x\neq\one$ is a \emph{meet-prime} element of a lattice, if $a\cdot b\leq x$ implies that $a\leq x$ or $b\leq x$.
\end{definition}

\begin{lemma}\label{l: grills are prime}
    Grills are precisely the meet-prime elements of the lattice $\Stack(B)$.
\end{lemma}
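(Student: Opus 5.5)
In the lattice $\Stack(B)$ meet is intersection and the top element is $B$. So the claim is that a subset $G\subseteq B$ is a grill exactly when $G$ is a stack, $G\neq B$, and for all stacks $U,V$ we have $U\cap V\subseteq G$ implies $U\subseteq G$ or $V\subseteq G$. I will prove both directions directly from (Gr0)--(Gr2), using principal stacks $\upop a$ as test objects.

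($\Rightarrow$) Let $G$ be a grill. It is a stack by (Gr1), and $G\neq B$ since $\zero\notin G$. Take stacks $U,V$ with $U\cap V\subseteq G$, and suppose $U\nsubseteq G$ and $V\nsubseteq G$. Pick $u\in U\setminus G$ and $v\in V\setminus G$. Then $u+v\geq u$ and $u+v\geq v$, so $u+v\in U\cap V\subseteq G$ because both are upward closed. By (Gr2), $u\in G$ or $v\in G$, a contradiction. Equivalently, $U+V\subseteq U\cap V\subseteq G$, and one can invoke Proposition~\ref{prop:prime-for-big-union}.

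($\Leftarrow$) Let $G$ be a meet-prime element of $\Stack(B)$. Then $G$ is a stack with $G\neq B$.

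First, $\zero\notin G$: a stack containing $\zero$ is all of $B$.

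Next, (Gr2): suppose $a+b\in G$. The key observation is
\[
\upop a\cap\upop b=\upop(a+b).
\]
Indeed, $x\geq a$ and $x\geq b$ holds iff $x\geq a+b$. Since $G$ is a stack containing $a+b$, we get $\upop(a+b)\subseteq G$. Meet-primeness then gives $\upop a\subseteq G$ or $\upop b\subseteq G$, that is, $a\in G$ or $b\in G$.

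Finally, $\one\in G$, which needs $G\neq\emptyset$. This is the one delicate point. The empty stack is the bottom of $\Stack(B)$, and it is vacuously meet-prime if and only if $\emptyset$ is meet-irreducible. Test it with $U=\upop a$ and $V=\upop(-a)$ for $a\notin\{\zero,\one\}$: their intersection is $\upop(a+-a)=\{\one\}\neq\emptyset$, so this pair does not refute it. In fact $\emptyset$ is meet-prime, because any two nonempty stacks both contain $\one$ and so have nonempty intersection. Hence $\emptyset$ is a meet-prime element that is not a grill.

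So the main obstacle is the empty stack, and the proof must handle it explicitly. I would prove the statement in the form "the grills are precisely the nonempty meet-prime elements", or equivalently read "element of the lattice" as a nonbottom element, and note the caveat. Under either reading, $\one\in G$ follows from $G\neq\emptyset$ and upward closure, completing (Gr0).
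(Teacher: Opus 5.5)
Your proof is correct and is essentially the paper's argument. For the forward direction you pick $u\in U\setminus G$ and $v\in V\setminus G$ and note $u+v\in U\cap V$; for the converse you use $\upop a\cap\upop b=\upop(a+b)$, which the paper phrases contrapositively.

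Your caveat is also right, and the paper's proof misses it. The converse there never verifies $\one\in G$. The empty stack, the bottom of $\Stack(B)$, is meet-prime because any two nonempty stacks share $\one$, but it is not a grill. So the statement holds only for nonempty meet-prime elements, exactly as you formulate it.
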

\begin{proof}
By the definition, any grill is different from $B$.

    Suppose that $G$ is a grill, and that $U$ and $V$ are stacks such that $U\nsubseteq G$ and $V\nsubseteq G$. Let $u\in U$ and $v\in V$ be such that $u,v\notin G$. As $G$ is a grill, $u+v\notin G$. But $u+v\in U\cap V$, and so $U\cap V\nsubseteq G$.

    For the converse, suppose that $G\in \Stack(B)$ is a meet-prime element and that $u,v\notin G$. Then, $\ua{u},\ua{v}\nsubseteq G$. By primality, $\ua{u}\cap \ua{v}\nsubseteq G$. As $\ua{u}\cap \ua{v}=\ua{(u+v)}$ and $G$ is a stack, it must be the case that $u+v\notin G$.
\end{proof}

Grills are closely related to ideals:
\begin{lemma}[{\citealp{Thron-PSAG}}]\label{lem:id-fil-gr-bijection}
    $G\in\Grill(B)$ if and only if $B\setminus G\in\Idp(B)$, so there is a one-to-one correspondence between grills and proper ideals.
 \end{lemma}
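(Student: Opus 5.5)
The plan is to check the axioms directly, in both directions, by complementation. Set $I\defeq B\setminus G$. Each grill axiom should turn into an ideal axiom once we pass to complements. We then record that $G\mapsto B\setminus G$ is an involution on $2^B$, and this immediately gives the bijection.

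For ($\Rightarrow$), assume $G\in\Grill(B)$.
\begin{enumerate}[label=(\roman*)]
\item By \eqref{Gr0}, $\zero\notin G$ and $\one\in G$. Hence $\zero\in I$ and $\one\notin I$, which gives (Id0) together with properness.
\item For downward closure, let $a\in I$ and $b\leq a$. If $b$ were in $G$, then \eqref{Gr1} would force $a\in G$, which is false. So $b\in I$.
\item For closure under joins, let $a,b\in I$. If $a+b\in G$, then \eqref{Gr2} would put $a$ or $b$ in $G$, which is false. So $a+b\in I$.
\end{enumerate}

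For ($\Leftarrow$), assume $I\defeq B\setminus G$ is a proper ideal. The same three arguments run in reverse.
\begin{enumerate}[label=(\roman*)]
\item Since $\zero\in I$ and $\one\notin I$, we get $\zero\notin G$ and $\one\in G$, which is \eqref{Gr0}. In particular $G$ is non-empty and a proper subset of $B$, as the definition of a grill requires.
\item For upward closure, let $a\in G$ and $a\leq b$. If $b\in I$, downward closure of $I$ would give $a\in I$, which is false. So $b\in G$, which is \eqref{Gr1}.
\item For primeness, suppose $a+b\in G$ but $a,b\notin G$. Then $a,b\in I$, so $a+b\in I$, a contradiction. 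This is \eqref{Gr2}.
\end{enumerate}

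For the correspondence, the maps $G\mapsto B\setminus G$ and $I\mapsto B\setminus I$ are mutually inverse on $2^B$. By the equivalence just proved, they restrict to mutually inverse maps between $\Grill(B)$ and $\Idp(B)$.

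I expect no real obstacle here. The only point needing care is to check explicitly the side conditions in Definition~\ref{def:Grill}: that $G$ is non-empty and a proper subset of $B$. These correspond exactly to $\one\notin I$ and $\zero\in I$, so properness of the ideal is precisely what is needed.
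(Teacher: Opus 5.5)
Your proof is correct and complete. Under complementation, \eqref{Gr0} corresponds exactly to (Id0) together with properness, and \eqref{Gr1} and \eqref{Gr2} are the contrapositives of (Id1) and (Id2). You also rightly check that \eqref{Gr0} supplies the side conditions (non-empty, proper subset) in the definition of a grill.

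The paper gives no proof of its own here; it only cites Thron, and your direct axiom-by-axiom check is the standard argument. One small addition: since $G\mapsto B\setminus G$ reverses inclusion, your bijection is automatically the anti-isomorphism between $\Grill(B)$ and $\Idp(B)$ that the paper uses right after the lemma.
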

 The bijective correspondence between grills and proper ideals from the lemma above is an anti-isomorphism between $\Grill(B)$ and $\Id^*(B)$. Then, the meet of a family of grills $\{G_i\mid i \in I\}$ in $\Grill(B)$ is the complement of the join of $\{B\setminus G_i\mid i\in I\}$ in $\Id^*(B)$. However, such a meet exists only for only for those families $\{G_i\mid i\in I\}$, for which there exists an ultrafilter $U\subseteq\bigcap_{i\in I}G_i$. This implies that the family of grills can be made into a semi-lattice with arbitrary unions as suprema and partial infima given by
\begin{gather*}%\label{l: lattice of grills}
\bigwedge_{i\in I}G_i\defeq B\setminus \bigcup \left\{\da{(g_0+\ldots+ g_n)}\suchthat n\in\Nat\tand g_0,\ldots,g_n\notin \bigcap_{i\in I} G_i\right\}
\end{gather*}
or, equivalently, by
    \[
\bigwedge_{i\in I} G_i\defeq\left\{a\in B\suchthat(\forall b_1,\ldots,b_n\in B)\,(a\leq b_1+\ldots+b_n\Rightarrow(\exists i\in I)\, b_i\in\bigcap G_i)\right\},
    \]
which is a consequence of the correspondence we pointed to above and the fact that for ideals, the supremum is computed as
    \[
\bigvee_{i\in I} J_i\defeq\left\{a\in B\suchthat(\exists b_1,\ldots,b_n\in\bigcup_{i\in I}J_i)\,a\leq b_1+\ldots+b_n\right\}.
    \]

\section{The support relation}\label{sec:support}
Before we formulate the axioms, we define an auxiliary relation of \emph{support} between subsets of a Boolean algebra, and we investigate its elementary properties.

\begin{definition}\label{def:leqUCA}
For $F,G\in 2^{B}$, we define $F\preceq G$ iff for every $g\in G$ there is $f\in F$ such that $f\leq g$. In the case $F\preceq G$ we say that $F$ \emph{supports} $G$ or that $F$ \emph{is a support} of $G$. \QED
\end{definition}

For $B$ a Boolean algebra, and $F\subseteq B$, we define
\begin{equation*}
   \Upop F\defeq\{G\subseteq B\mid F\preceq G\tand G\neq \emptyset\}.
\end{equation*}
We have that for any $a\in B$ and any $M\subseteq\Bne$
\[
M\in\Upop\{a\}\iff a\leq M,
\]
and
\[
\bigcup_{m\in M}\Upop \{m\}\subseteq \Upop M,
\]
but the reverse inclusion does not hold in general, as the example below shows.

\begin{example}
    Let $x,y\in\Bne$ be such that $x\cdot y=\zero$. Then for $M\defeq\{x,y\}$ we have that $M\in\Upop M$, but $M\notin\Upop\{x\}\cup\Upop\{y\}$.\QED
\end{example}

\begin{proposition}\label{prop:support-very-basic}
    If $F,G,H,M$ are subsets of $B$, then:
    \begin{enumerate}[label=(\arabic*),ref=\arabic*]
    \item $F\preceq F+G$.
    \item $F+G\preceq H$ implies that $F\preceq H$ and $G\preceq H$.
    \item $F \preceq H$ and $G \preceq M$ imply that $F + G \preceq H + M$.
    \item $F\preceq G$ and $G\preceq H$ imply that $F\preceq H$.
    \item $(F+G)+H = F+(G+H)$.
    \item $F\preceq G$ iff $G\subseteq\upop F$.
    \item If $F\subseteq G$, then $G\preceq F$.
    \item $F\sim G$ iff $\upop F=\upop G$.
    \item $F\sim\upop F$.
    \item $\Upop F=2^{\upop F}_{+}$.
    \end{enumerate}
\end{proposition}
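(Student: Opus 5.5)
All ten items follow directly from Definition~\ref{def:leqUCA} and the definitions of $F+G$, $\upop F$ and $\Upop F$. I would prove them in order, since later items reuse earlier ones. Throughout, $F\sim G$ is read as $F\preceq G$ and $G\preceq F$, the equivalence induced by the preorder $\preceq$. Two facts from Boolean algebra do most of the work: $f\leq f+g$, and $f\leq h$, $g\leq m$ imply $f+g\leq h+m$.

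For (1), given $f+g\in F+G$, the element $f\in F$ witnesses $f\leq f+g$.

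For (2), take $h\in H$. There are $f\in F$ and $g\in G$ with $f+g\leq h$, so $f\leq h$ and $g\leq h$.

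For (3), given $h+m\in H+M$, pick $f\leq h$ in $F$ and $g\leq m$ in $G$; then $f+g\leq h+m$ and $f+g\in F+G$.

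For (4), compose the witnesses and use transitivity of $\leq$.

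For (5), associativity of $+$ in $B$ gives
\[
(F+G)+H=\{f+g+h\mid f\in F,\ g\in G,\ h\in H\}=F+(G+H).
\]

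For (6), $F\preceq G$ says that every $g\in G$ lies in some $\upop f$ with $f\in F$, which is exactly $G\subseteq\upop F$.

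For (7), if $F\subseteq G$, then for $f\in F$ we have $f\in G$ and $f\leq f$, so $G\preceq F$.

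For (9), (7) applied to $F\subseteq\upop F$ gives $\upop F\preceq F$. By (6), $F\preceq\upop F$ holds since $\upop F\subseteq\upop F$.

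For (8), suppose $F\sim G$. By (6), $F\preceq G$ gives $G\subseteq\upop F$. Since $\upop F$ is a stack, $\upop G\subseteq\upop F$, and symmetrically $\upop F\subseteq\upop G$. Conversely, if $\upop F=\upop G$, then $G\subseteq\upop G=\upop F$, so $F\preceq G$ by (6), and likewise $G\preceq F$.

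For (10), $G\in\Upop F$ iff $G\neq\emptyset$ and $F\preceq G$. By (6) this holds iff $G$ is a non-empty subset of $\upop F$, that is, $G\in 2^{\upop F}_{+}$.

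There is no genuine obstacle here. The only point needing care is (8): the symbol $\sim$ has not been defined before the statement, so I would fix it as mutual support. With that reading, (8) and (9) use the fact that $\upop F$ is upward closed, so that $G\subseteq\upop F$ implies $\upop G\subseteq\upop F$. I would also note that the empty cases cause no trouble: if $F$ or $G$ is empty, then $F+G=\emptyset$, and every statement above remains true vacuously.
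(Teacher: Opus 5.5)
Your proposal is correct. It carries out, item by item, the direct verification from the definitions of $\preceq$, $F+G$, $\upop F$ and $\Upop F$ that the paper dismisses as ``Routine''. Your reading of $\sim$ as mutual support agrees with the definition of similarity the paper gives immediately after the proposition, and your ordering ((9) using only (6) and (7), and (8) using that $\upop F$ is upward closed) is sound.
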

\begin{proof}
    Routine.
\end{proof}

The support relation is reflexive and transitive but not antisymmetric.
\begin{example}\label{ex:prec}
    Consider the interval algebra $B$ over the reals, that is, $B$ consists of $\emptyset,[0,\infty)$  and those subsets of $[0,\infty)$ that are finite unions of left-closed, right-open intervals $[x,y)$ \cite[Example 1.11]{kop89}. Let $F$ be the set of elements of $B$ with rational endpoints, and $G$ the set of elements with irrational endpoints. Then, $F \cap G = \emptyset$, $F \preceq G$ and $G \preceq F$.\QED
\end{example}

\begin{example}\label{ex:reals}
    For a more topological example, consider $\RC(\Real)$, the Boolean algebra of regular closed subsets of the reals with the standard topology. The sets $F\defeq\{[-\nicefrac{1}{n^2},\nicefrac{1}{n^2}]\mid n\in\Nat^+\}$ and $G\defeq\{[-\nicefrac{1}{n^3},\nicefrac{1}{n^3}]\mid n\in\Nat^+\}$ mutually support each other, but they are clearly different sets.\QED
\end{example}

As the support relation is a pre-order, its symmetric closure
\begin{equation}\tag{$\dftt{\sim}$}\label{df:equiv-refine}
F\sim G\iffdef F\preceq G\tand G\preceq F.
\end{equation}
is an equivalence relation. If $F\sim G$, then we say that $F$ and $G$ are \emph{similar}. For a subset $F$ of an algebra $B$, let $[F]$ be the equivalence class of $F$ with respect to the similarity relation
\[
[F]\defeq F/_{\sim}.
\]

We can now extend the support relation to the equivalence classes in the obvious way
\[
[F]\preceq [G]\iffdef F\preceq G.
\]
The relation is well-defined, that is, if $F'\in[F], G'\in[G]$ and $[F]\preceq [G]$, then $[F']\preceq[G']$, which is a consequence of the transitivity of the support relation.

\begin{proposition}\label{prop:+}
    $\klam{2^{B}/_{\sim},\preceq}$ is a lattice. The join of $[F]$ and $[G]$ is given by $[F+G]$, and the meet by $[F\cup G]$.
\end{proposition}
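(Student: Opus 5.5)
The plan is to show that $[F+G]$ is the least upper bound and $[F\cup G]$ the greatest lower bound of $[F]$ and $[G]$ in the preorder $\preceq$, and then pass to the quotient. It has already been observed that $\preceq$ is well defined and is a partial order on $2^{B}/_{\sim}$. So it suffices to check the lub/glb properties for representatives in the preordered set $\klam{2^B,\preceq}$. Since suprema and infima there are unique only up to $\sim$, their classes are then automatically independent of the chosen representatives. This saves a separate well-definedness argument for $+$ and $\cup$ on classes.

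For the join, I would first show that $F+G$ is an upper bound. Proposition~\ref{prop:support-very-basic}(1) gives $F\preceq F+G$. Since $F+G=G+F$ by commutativity of $+$ in $B$, it also gives $G\preceq F+G$. Next I would show it is the least upper bound. Suppose $F\preceq H$ and $G\preceq H$, and take $h\in H$. Then there are $f\in F$ and $g\in G$ with $f\leq h$ and $g\leq h$, hence $f+g\leq h$ with $f+g\in F+G$. This shows $F+G\preceq H$.

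For the meet, I would first show that $F\cup G$ is a lower bound. Since $F,G\subseteq F\cup G$, Proposition~\ref{prop:support-very-basic}(7) yields $F\cup G\preceq F$ and $F\cup G\preceq G$. Next I would show it is the greatest lower bound. Suppose $H\preceq F$ and $H\preceq G$. Every element of $F\cup G$ lies in $F$ or in $G$, so it is above some element of $H$, giving $H\preceq F\cup G$.

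The only step needing care is the degenerate case involving $\emptyset$. Here $X\preceq\emptyset$ holds vacuously for every $X$, so $[\emptyset]$ is the top element; moreover $\emptyset\preceq X$ forces $X=\emptyset$, and $F+\emptyset=\emptyset$. I expect this to be the main obstacle only in the sense that the formulas must be checked to remain valid here. If $G=\emptyset$, then $F+G=\emptyset$ is the top, and it is indeed the join of $[F]$ with the top. The least-upper-bound argument above also still works: an upper bound $H$ of $\emptyset$ must be $\emptyset$, and then $F+G\preceq H$ holds vacuously. The meet argument involves no existential choice from an empty set that could fail, so it needs no modification. This establishes that $\klam{2^{B}/_{\sim},\preceq}$ is a lattice with the stated operations.
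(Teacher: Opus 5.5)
Your proof is correct and follows essentially the same argument as the paper: $F,G\preceq F+G$ because $f,g\le f+g$, leastness via $f+g\le h$, and the direct check that $F\cup G$ is the greatest lower bound. Your extra remarks are accurate and fill in details the paper leaves implicit. These are the independence from representatives, since least upper and greatest lower bounds in a preorder are unique up to $\sim$, and the degenerate case of $\emptyset$, whose class is the top element.
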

\begin{proof}
    Let $B$ be a Boolean algebra, and let $F,G\subseteq B$. Every element of $F+G$ is of the form $f+g$ for some $f\in F$, $g\in G$, and so $f\in F$ and $g\in G$ are such that $f,g\leq f+g$. Then, $F,G\preceq F+G$. Suppose that $F,G\preceq H$ for $H\subseteq B$. Let $h\in H$, and let $f\in F$ and $g\in G$ be such that $f,g\leq h$. Then, $f+g\leq h$. So, $F+G\preceq H$. Thus $F+G$ is the join of $F$ and $G$.

    For the meet, it is clear that $F\cup G\preceq F,G$. Suppose $H\preceq F,G$. We show that $H\preceq F\cup G$. W.l.o.g., let $x$ be an element of $F$. Since by assumption $H\preceq F$, in $H$ there is an $h$ such that $h\leq x$. So $H\preceq F\cup G$. It follows that $F\cup G$ is the meet of $F$ and $G$.
\end{proof}

If $x\in B$, then
\[
\set{x} \sim \ua{x}
\]
Indeed, $\set{x} \preceq \ua{x}$ since $x \leq y$ for all $y \in \ua{x}$. Conversely, $\ua{x} \preceq \set{x}$ since $x \in \ua{x}$.

Let $[x]$ be the equivalence class of $\{x\}$ with respect to the similarity relation
\begin{equation*}
    [x]\defeq\{x\}/_{\mathord{\sim}}.
\end{equation*}

\begin{lemma}\label{lem:equiv-of-x}
    If $x\in B$, then
    \[
       [x] =\{M\subseteq B\mid x\in M\tand x\leq M\}.
    \]
\end{lemma}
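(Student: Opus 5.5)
The plan is to prove the set equality by double inclusion, unfolding the definition of $\sim$ as mutual support (Definition~\ref{def:leqUCA}). Since $[x]$ is the class of $\{x\}$, a set $M\subseteq B$ lies in $[x]$ exactly when $\{x\}\preceq M$ and $M\preceq\{x\}$. The whole argument is to translate each of these two conditions separately.

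For the condition $\{x\}\preceq M$, the definition of support says that every $m\in M$ has some $f\in\{x\}$ with $f\leq m$. The only candidate is $f=x$, so the condition reads $x\leq m$ for all $m\in M$, which is exactly $x\leq M$. (This is also the remark made earlier that $M\in\Upop\{a\}$ iff $a\leq M$, although we do not need the nonemptiness clause here.)

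For the condition $M\preceq\{x\}$, the definition says there is some $m\in M$ with $m\leq x$.
\begin{itemize}
\item If $M\in[x]$, we already know $x\leq m$ for every $m\in M$. For the witness $m$ this gives both $x\leq m$ and $m\leq x$, hence $m=x$ by antisymmetry of the Boolean order. Therefore $x\in M$, and together with $x\leq M$ this proves the inclusion ``$\subseteq$''.
\item Conversely, if $x\in M$ and $x\leq M$, then $x$ itself witnesses $M\preceq\{x\}$, since $x\leq x$. Alternatively, one can appeal to item (7) of Proposition~\ref{prop:support-very-basic}: $\{x\}\subseteq M$ gives $M\preceq\{x\}$. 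Together with $\{x\}\preceq M$ from $x\leq M$, this yields $M\sim\{x\}$, which is the inclusion ``$\supseteq$''.
\end{itemize}

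There is no real obstacle. The only point needing care is the antisymmetry step that forces the witness for $M\preceq\{x\}$ to be $x$ itself, and so turns mere existence of a lower element into actual membership $x\in M$.
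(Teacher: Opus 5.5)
Your proof is correct and follows essentially the same route as the paper's: double inclusion, with $\{x\}\preceq M$ read as $x\leq M$, $M\preceq\{x\}$ giving a witness $m_0\leq x$, and antisymmetry of $\leq$ forcing $m_0=x$ (so $x\in M$). The converse direction is the same direct check.
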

\begin{proof}
($\subseteq$) Suppose $M\preceq \{x\}$ and $\{x\}\preceq M$. From the first condition, we have that there is an $m_0\in M$ such that $m_0\leq x$. From the second, $x\leq M$, so in particular $x\leq m_0$. Therefore, by the antisymmetry of $\leq$ we obtain $x=m_0$, i.e., $x\in M$.

\smallskip

($\supseteq$) If $x\in M$, then clearly $M\preceq\{x\}$, and if $x$ is a lower bound of $M$, then $\{x\}\preceq M$. Thus $\{x\}$ and $M$ are similar, as desired.
\end{proof}

\section{Ultracontact Algebras}\label{sec:UCA}

We introduce the central concept of our work.
\begin{definition}\label{def:UCA}
An \emph{ultracontact algebra} (UCA for short) is a pair $\klam{B,\scrK}$ such that $B$ is a Boolean algebra and $\scrK\subseteq2^B$ is a~family---which we call \emph{ultracontact} or, briefly, UC---that satisfies the following axioms for all $F,G\subseteq B$:
\begin{enumerate}[label=(K\arabic*),ref=K\arabic*,itemsep=3pt]
\addtocounter{enumi}{-1}
\item \label{K0} $\emptyset\notin\scrK$.
    \item \label{K1} If $\zero\in F$, then $F\notin \scrK$.
    \item \label{K2} If $x\neq\zero$, then $\{x\}\in \scrK$.
    \item \label{K3} If $F\preceq G\neq\emptyset$ and $F\in \scrK$, then $G\in \scrK$.
    \item \label{K4} If $F+ G\in \scrK$, then $F\in \scrK$ or $G\in \scrK$.\QED
\end{enumerate}
\end{definition}

A straightforward induction on $n$ shows that \eqref{K4} can be generalized to
\begin{equation}\tag{K4$'$}\label{K4-general}
    \text{If } F_1+\ldots+F_n\in \scrK, \text{ then }(\exists i\leqslant n)\, F_i\in \scrK,
\end{equation}
where
\[
F_1+\ldots+F_n\defeq\{f_1+\ldots+f_n\mid (\forall i\leqslant n)\,f_i\in F_i\}.
\]

Observe that \eqref{K2} may be equivalently formulated as
\begin{equation}\tag{K2$'$}
    \Atom\left(2^{\Bne}\right)\subseteq\scrK,
\end{equation}
and \eqref{K3} as
\begin{equation}\tag{K3$'$}
\text{If $F\in\scrK$, then $\Upop F\subseteq\scrK$.}
\end{equation}

\begin{proposition}\label{prop:K3-reversed}
    For any ultracontact algebra $\klam{B,\scrK}$, and any non-empty $F,G\in 2^B$, if $F\in\scrK$ or $G\in\scrK$, then $F+G\in\scrK$.
\end{proposition}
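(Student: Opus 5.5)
The proof is a direct application of the upward-closure axiom \eqref{K3}, using the first item of Proposition~\ref{prop:support-very-basic}, namely that $F\preceq F+G$.

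By symmetry it suffices to treat the case $F\in\scrK$. If $G\in\scrK$ instead, note that $F+G=G+F$ as sets, since Boolean join is commutative, so the same argument applies with the roles of $F$ and $G$ swapped.

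Assume $F\in\scrK$ with $F,G$ non-empty. By Proposition~\ref{prop:support-very-basic}(1) we have $F\preceq F+G$. Concretely, every element of $F+G$ has the form $f+g$ with $f\in F$, and $f\leq f+g$.

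Next we check that $F+G\neq\emptyset$. This is where the hypothesis that both sets are non-empty is used: as remarked after the definition of $F+G$, this set is non-empty precisely when both $F$ and $G$ are. Without this hypothesis the statement would fail, because $F+\emptyset=\emptyset\notin\scrK$ by \eqref{K0}.

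Now \eqref{K3}, applied with $F\in\scrK$ and $F\preceq F+G\neq\emptyset$, gives $F+G\in\scrK$. Equivalently, by (K3$'$) we have $F+G\in\Upop F\subseteq\scrK$.

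There is no real obstacle here. The only point needing care is the non-emptiness side condition in \eqref{K3}, which the hypothesis on $F$ and $G$ secures.
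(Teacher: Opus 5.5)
Your proof is correct and is the paper's own argument. Non-emptiness of $F$ and $G$ gives $F+G\neq\emptyset$, Proposition~\ref{prop:support-very-basic}(1) gives $F\preceq F+G$, and \eqref{K3} then yields $F+G\in\scrK$, with the case $G\in\scrK$ handled symmetrically.
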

\begin{proof}
    Indeed, if $F\neq\emptyset\neq G$, then $F+G\neq\emptyset$. Moreover, $F\preceq F+G$. So, if $F\in\scrK$, then $F+G\in\scrK$. Similarly, $G\in\scrK$ implies $F+G \in \scrK$.
\end{proof}

\begin{lemma}\label{lem:K-down-closed}
    Any ultracontact $\scrK$ is closed under non-empty subsets of elements
    of~$\scrK$. In particular, $\scrK$ must be closed under non-empty intersections.
\end{lemma}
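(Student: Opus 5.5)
The plan is to obtain the lemma as an immediate consequence of axiom~\eqref{K3} together with item~(7) of Proposition~\ref{prop:support-very-basic}.

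Fix $F\in\scrK$ and a non-empty subset $G\subseteq F$. By Proposition~\ref{prop:support-very-basic}(7), $G\subseteq F$ gives $F\preceq G$. We can also check this directly from Definition~\ref{def:leqUCA}: every $g\in G$ lies in $F$ and satisfies $g\leq g$. Since $G\neq\emptyset$ and $F\in\scrK$, axiom~\eqref{K3} yields $G\in\scrK$. Equivalently, in the form (K3$'$), we have $G\in\Upop F\subseteq\scrK$.

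For the second claim, let $\{F_i\mid i\in I\}\subseteq\scrK$ with $I\neq\emptyset$, and suppose the intersection $\bigcap_{i\in I}F_i$ is non-empty. Pick any $i_0\in I$. Then $\bigcap_{i\in I}F_i\subseteq F_{i_0}\in\scrK$, so the first part gives $\bigcap_{i\in I}F_i\in\scrK$.

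The only real subtlety is reading ``non-empty intersections'' correctly. It must mean intersections that are themselves non-empty sets, since by \eqref{K0} the empty set never belongs to $\scrK$. This non-emptiness is exactly the side condition $G\neq\emptyset$ demanded by \eqref{K3}. No further obstacle is expected.
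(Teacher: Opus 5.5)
Your proof is correct and follows the paper's argument exactly: $G\subseteq F$ gives $F\preceq G$ by Proposition~\ref{prop:support-very-basic}(7), and \eqref{K3} then yields $G\in\scrK$. Your treatment of the intersection clause (choosing $i_0$ with $\bigcap_{i\in I} F_i\subseteq F_{i_0}$) spells out a step the paper leaves implicit.
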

\begin{proof}
    Let $M$ be an element of $\scrK$ and $N$ be a non-empty subset of $M$. Then $M\preceq N$, by Proposition~\ref{prop:support-very-basic}(7). So $N\in\scrK$ by \eqref{K3}.
\end{proof}

We show that every Boolean algebra carries at least two ultracontacts: the smallest $\Kmin$ and the largest $\Kmax$. Let:
\begin{equation*}
    \begin{split}
    \Kmin\defeq{}&\{M\in\powerne{B}\mid (\exists x\in\Bne)\,x\leq M\}\\
    ={}&\textstyle\bigcup\{\Upop\{x\}\mid x\in\Bne\}
    \end{split}
    \end{equation*}
    and
    \begin{equation*}
        \Kmax\defeq\{M\in\powerne{B}\mid\zero\notin M\}.
    \end{equation*}
\begin{theorem}\label{th:smallest-UC}
    $\klam{B,\Kmin}$ is a UCA, and for every ultracontact $\scrK$ on $B$, $\Kmin\subseteq\scrK$.
\end{theorem}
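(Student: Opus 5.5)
The proof has two parts. First we check that $\Kmin$ satisfies axioms \eqref{K0}--\eqref{K4} one by one. Then we show minimality, which follows almost immediately from \eqref{K2} and \eqref{K3}.

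For the axioms:
\begin{itemize}
\item \eqref{K0} holds because $\Kmin\subseteq\powerne{B}$ by definition.
\item For \eqref{K1}, if $\zero\in M$ and $x\leq M$, then $x\leq\zero$, so $x=\zero$. Hence no $x\in\Bne$ can be a lower bound of $M$, and $M\notin\Kmin$.
\item \eqref{K2} is immediate: for $x\neq\zero$, $x$ is a lower bound of $\{x\}$.
\item For \eqref{K3}, let $F\in\Kmin$ with witness $x\in\Bne$, $x\leq F$, and let $F\preceq G\neq\emptyset$. Every $g\in G$ lies above some $f\in F$, and $x\leq f$, so $x\leq G$ by transitivity of $\leq$. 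Thus $G\in\Kmin$.
\end{itemize}

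Axiom \eqref{K4} is the only step requiring an idea. Suppose $F+G\in\Kmin$, with $x\in\Bne$ and $x\leq f+g$ for all $f\in F$, $g\in G$. Then $F$ and $G$ are non-empty. We want a non-zero lower bound of $F$ or of $G$. Argue by contradiction and assume neither $F$ nor $G$ has one. A single witness from each side will not suffice, so we use the grill/ultrafilter machinery. Pick an ultrafilter $U$ containing $x$. Since $x\leq f+g$ for all $f,g$, every $f+g$ lies in $U$. By primality (Theorem~\ref{thm:uf}), for each pair either $f\in U$ or $g\in U$. If some $f_0\in F$ is not in $U$, then $g\in U$ for every $g\in G$, which gives $G\subseteq U$; otherwise $F\subseteq U$. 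This only places one of $F,G$ inside an ultrafilter, which is weaker than having a lower bound, so this route is insufficient.

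A purely algebraic argument works instead. Suppose $F$ has no non-zero lower bound. Then for each $y\in\Bne$ with $y\leq x$ there is $f\in F$ with $y\not\leq f$, i.e.\ $y\cdot -f\neq\zero$. Take $y=x$ and obtain $f_0\in F$ with $x\cdot -f_0\neq\zero$. Set $z\defeq x\cdot -f_0$. For any $g\in G$ we have $x\leq f_0+g$, so $x\cdot -f_0\leq g$. Hence $z$ is a non-zero lower bound of $G$, and $G\in\Kmin$. This settles \eqref{K4}.

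For minimality, let $\scrK$ be any ultracontact and $M\in\Kmin$, with witness $x\in\Bne$, $x\leq M$. By \eqref{K2}, $\{x\}\in\scrK$. Since $\{x\}\preceq M$ and $M\neq\emptyset$, \eqref{K3} gives $M\in\scrK$. Equivalently, by (K3$'$), $\Upop\{x\}\subseteq\scrK$ for each $x\in\Bne$, and $\Kmin$ is the union of these sets.

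The main obstacle is \eqref{K4}. The key step there is the choice $z=x\cdot -f_0$ for a single $f_0\in F$ that fails to lie above $x$. If no such $f_0$ exists, then $x$ itself is a non-zero lower bound of $F$, so $F\in\Kmin$ directly.
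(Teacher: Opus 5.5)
Your proof is correct. For \eqref{K0}--\eqref{K3} and for minimality it matches the paper's. The route differs only in \eqref{K4}.

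The paper argues by contradiction. If neither $F$ nor $G$ has a non-zero lower bound, then $\bigwedge F=\zero=\bigwedge G$. It then cites a distributivity lemma for existing infima (Koppelberg, Lemma 1.33) to conclude $\bigwedge(F+G)=\zero$, so $F+G$ has no non-zero lower bound.

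You argue directly. Given a lower bound $x\in\Bne$ of $F+G$, either $x\leq F$, or some $f_0\in F$ gives $z\defeq x\cdot -f_0\neq\zero$. In the second case, $x\leq f_0+g$ forces $z\leq g$ for every $g\in G$. This is essentially the needed special case of Koppelberg's lemma, proved inline. It avoids the external citation and yields an explicit witness ($x$ or $x\cdot -f_0$). It also shows that the case split on whether $x\leq F$ is all you need, so no contradiction is required. The paper's version is shorter only because it outsources this computation.

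In a final write-up, delete the abandoned ultrafilter paragraph and the ``argue by contradiction'' framing, since neither plays any role in the argument you actually use.
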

\begin{proof}
    The axioms \eqref{K0}--\eqref{K2} are obvious.

    \smallskip

    For \eqref{K3}, if $F$ supports $G\neq\emptyset$ and $F\in\Kmin$, then $F$ has a non-zero lower bound~$x$. But $x\leq G$ as well, as any lower bound of $F$ must be a lower bound of~$G$.

    \smallskip

    For \eqref{K4}, if neither $F$ nor $G$ has a~non-zero lower bound, so $\bigwedge F=\zero=\bigwedge G$. By \citep[Lemma 1.33]{kop89} it must be the case that $\bigwedge (F+G)=\zero$.

       \smallskip

    For the second part, let $\scrK$ be an arbitrary ultracontact on $B$ and let $M\in\Kmin$. So, $M\neq\emptyset$ and $M$ has a~lower bound $x\in\Bne$. Then $\{x\}$ supports $M$. Further, by \eqref{K2}, $\{x\}\in\scrK$, so \eqref{K3} entails that $M\in\scrK$. Therefore, $\Kmin\subseteq\scrK$, as required.

    This ends the proof.
\end{proof}

\pagebreak

\begin{theorem}\label{th:largest-UC}
    $\klam{B,\Kmax}$ is a UCA, and for any ultracontact $\scrK$ on $B$, $\scrK\subseteq\Kmax$.
\end{theorem}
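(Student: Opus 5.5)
We verify the axioms \eqref{K0}--\eqref{K4} for $\Kmax$ one at a time, and then show maximality directly from the axioms for an arbitrary $\scrK$.

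\eqref{K0} and \eqref{K1} hold by the definition of $\Kmax$, which contains only non-empty sets avoiding $\zero$. For \eqref{K2}, if $x\neq\zero$ then $\{x\}$ is non-empty and does not contain $\zero$, so $\{x\}\in\Kmax$.

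For \eqref{K3}, let $F\preceq G\neq\emptyset$ with $F\in\Kmax$. We argue by contradiction. Suppose $\zero\in G$. Then $F\preceq G$ gives some $f\in F$ with $f\leq\zero$, so $f=\zero\in F$. This contradicts $F\in\Kmax$. Hence $\zero\notin G$, and since $G\neq\emptyset$ we get $G\in\Kmax$.

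For \eqref{K4}, let $F+G\in\Kmax$. Then $F+G\neq\emptyset$, so both $F$ and $G$ are non-empty, as noted after the definition of $F+G$. If both $F\notin\Kmax$ and $G\notin\Kmax$, then $\zero\in F$ and $\zero\in G$. It follows that $\zero=\zero+\zero\in F+G$, which contradicts $F+G\in\Kmax$. So $F\in\Kmax$ or $G\in\Kmax$.

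For maximality, let $\scrK$ be any ultracontact and let $M\in\scrK$. By \eqref{K0}, $M\neq\emptyset$, and by \eqref{K1}, $\zero\notin M$. Hence $M\in\Kmax$.

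No step is a real obstacle. The only point needing care is in \eqref{K4}: we must first note that $F$ and $G$ are non-empty, so that failing to lie in $\Kmax$ forces each of them to contain $\zero$.
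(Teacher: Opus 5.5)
Your proof is correct and follows the paper's argument essentially step for step: (K0)--(K2) hold by definition, (K3) and (K4) follow by tracking where $\zero$ can occur, and maximality comes from (K0) and (K1). You also state explicitly, in (K4), that $F$ and $G$ must be non-empty because $F+G\neq\emptyset$; the paper uses this without saying so.
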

\begin{proof}
    \eqref{K0}--\eqref{K2} are immediate.

    \smallskip

    \eqref{K3} If $F\in\Kmax$, then $\zero\notin F$. Therefore, if $F$ supports $G$, $G$ cannot contain the zero of the algebra. Thus, if $G$ is non-empty it must be an element of $\Kmax$.

    \smallskip

    \eqref{K4} If $F+G\in\Kmax$, then $\{f+g\mid f\in F\tand g\in G\}$ is non-empty and does not contain the zero. So for every $f\in F$ and every $g\in G$ we have $f+g\neq\zero$. But then it cannot be the case that $\zero$ is in both $F$ and $G$. So either $F\in\Kmax$ or $G\in\Kmax$.

    \smallskip

   By \eqref{K0} and \eqref{K1}, it is obvious that if $\scrK$ is an ultracontact, then $\scrK\subseteq\Kmax$.
\end{proof}

As a more concrete example, we turn to the particular case of ultracontact in the topological setting. In the same way as the standard contact axioms are motivated by and abstracted from the basic and general properties of the nearness of sets, our axioms are the abstract counterparts of the generalized phenomenon of nearness.

\begin{example}\label{ex:topological-UC}
   Let $X$ be a topological space, $\RC(X)$ be its complete BA of regular closed sets\footnote{A set $A$ is regular closed in $X$ if $A$ is equal to the closure of its interior: $\Cl\Int A=A$.}, and let $S\leq\RC(X)$. For families $\calF,\calG\subseteq 2^{\RC(X)}$ we define
    \[
        \calF\uplus\calG\defeq\{F\cup G\mid F\in\calF\tand G\in\calG\}.
    \]
   Let $\scrK^S\subseteq 2^{S}$ be the family of all non-empty $\calF\in 2^{S}$ such that $\bigcap_{F\in\ca{F}}F\neq\emptyset$, where $\bigcap$ is the set-theoretical intersection (which is the infimum operation of $2^{X}$, but not necessarily the infimum of $\RC(X)$ or $S$). Then, $\scrK^S$ meets the conditions for ultracontact:
\begin{enumerate}[label=(TK\arabic*),ref=TK\arabic*,itemsep=5pt]
\addtocounter{enumi}{-1}
    \item\label{TK0} $\emptyset\notin\scrK^S$.
    \item\label{TK1} If $\emptyset\in\calF$, then $\calF\notin\scrK^S$.
    \item\label{TK2} If $F\neq\emptyset$, then $\{F\}\in\scrK^S$.
    \item\label{TK3} If $\calF\preceq\calG\ne\emptyset$ and $\calF\in\scrK^S$, then $\calG\in\scrK^S$.
    \item\label{TK4} If $\calF+\calG\in\scrK^S$, then $\calF\in\scrK^S$ or $\calG\in\scrK^S$.
\end{enumerate}
Here, in $\scrK^S$ we want all non-empty collections of regular closed sets that have non-empty intersections, i.e., non-empty infima in $2^X$, but not necessarily in $S$, or even $\RC(X)$. So this time, the ultracontact between a family of regions is witnessed by objects from beyond the algebra.\QED
\end{example}

For an algebra $B$, we define $\UC(B)$ as the poset of all ultracontacts on $B$, ordered by set inclusion.

\begin{theorem}\label{th:lattice-of-UCAs}
$\UC(B)$ is a complete lattice, where joins are given by set-theoretical unions.
\end{theorem}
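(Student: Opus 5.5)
The plan is to show directly that the union of any non-empty family of ultracontacts is again an ultracontact. Since $\Kmin$ is the least element of $\UC(B)$ by Theorem~\ref{th:smallest-UC}, this is enough: a poset with a least element in which every non-empty subset has a join is a complete lattice. The empty join is $\Kmin$. Meets are then obtained in the standard way: for $\mathcal{X}\subseteq\UC(B)$, $\bigwedge\mathcal{X}$ is the union of all ultracontacts contained in every member of $\mathcal{X}$. This family is non-empty because it contains $\Kmin$, and it is bounded above by $\Kmax$ by Theorem~\ref{th:largest-UC}.

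So let $\{\scrK_i\mid i\in I\}$ with $I\neq\emptyset$ be ultracontacts on $B$, and put $\scrK\defeq\bigcup_{i\in I}\scrK_i$. The axioms are checked one by one.
\begin{itemize}
\item \eqref{K0} and \eqref{K1} are properties of individual members of $\scrK$, so they transfer from the $\scrK_i$.
\item \eqref{K2} holds because $I\neq\emptyset$ and each $\scrK_i$ satisfies it. Equivalently, $\Kmin\subseteq\scrK_i\subseteq\scrK$.
\item For \eqref{K3}, suppose $F\in\scrK$ and $F\preceq G\neq\emptyset$. Then $F\in\scrK_i$ for some $i$, so $G\in\scrK_i\subseteq\scrK$.
\item For \eqref{K4}, suppose $F+G\in\scrK$. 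Pick $i$ with $F+G\in\scrK_i$. Then \eqref{K4} for $\scrK_i$ gives $F\in\scrK_i$ or $G\in\scrK_i$, hence $F\in\scrK$ or $G\in\scrK$.
\end{itemize}
The crucial feature is that every axiom either concerns a single member of $\scrK$ or has a single member of $\scrK$ as its hypothesis. Each such hypothesis is witnessed inside one $\scrK_i$, and the conclusion is produced there.

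Since $\scrK$ is an ultracontact containing each $\scrK_i$, and any upper bound of the family in $\UC(B)$ contains their union, $\scrK$ is the join in $\UC(B)$.

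I do not expect a real obstacle. The only points needing care are the empty family, which is handled via $\Kmin$, and the observation that meets need not be intersections. I would not attempt to prove that intersections are ultracontacts, since \eqref{K4} may fail for them. Instead I would define meets abstractly from the existence of all joins.
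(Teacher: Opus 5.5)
Your proof is correct and follows the paper's route. The paper likewise observes that a union of ultracontacts is an ultracontact, takes $\Kmin$ as the bottom element from Theorem~\ref{th:smallest-UC}, and concludes completeness from the standard fact (cited from Birkhoff) that a join-complete semilattice with a least element is a complete lattice. Your axiom-by-axiom check fills in the paper's ``easy to see'', and your explicit handling of the empty family via $\Kmin$ is slightly more precise than the paper's phrase ``the union of any family''.
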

\begin{proof}
   It is easy to see that the union of any family of ultracontacts must be an~ultracontact. In consequence, the poset of all ultracontacts is a join-complete semilattice, and it has the smallest element by Theorem \ref{th:smallest-UC}. Therefore, it is a complete lattice by \cite[Theorem IV.2]{Birkhoff-LT}.
\end{proof}

\pagebreak

Below, we explicitly characterize the infimum operation in the lattice $\UC(B)$.

\begin{theorem}\label{th:meet-ultracontact}
If $\calF$ is a non-empty family of ultracontacts on $B$, then the meet $\bigwedge \calF$ in the lattice $\UC(B)$ is
\[
\scrK\defeq\left\{ F \in 2^{B}_+ \suchthat (\forall\, G_1, \ldots, G_n \in 2^{B}_+)(F \preceq G_1+ \ldots + G_n \Rightarrow (\exists i \leqslant n) \,G_i \in \bigcap \calF\right\}.\footnotemark
\]
\end{theorem}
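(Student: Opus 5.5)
Let $\scrK_0\defeq\bigcap\calF$. The plan has two parts. First, show that $\scrK$ is an ultracontact contained in $\scrK_0$. Second, show that every ultracontact $\scrK'\subseteq\scrK_0$ is contained in $\scrK$. Together these give $\scrK=\bigwedge\calF$.

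\emph{$\scrK\subseteq\scrK_0$.} Take $F\in\scrK$ and apply the defining condition with $n=1$ and $G_1=F$. Since $F\preceq F$, this gives $F\in\scrK_0$. As a consequence, \eqref{K0} and \eqref{K1} for $\scrK$ follow from the fact that $\scrK_0$ satisfies them, because each member of $\calF$ does.

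\emph{\eqref{K2}.} Let $x\neq\zero$ and suppose $\{x\}\preceq G_1+\ldots+G_n$ with all $G_i$ non-empty. Then $G_1+\ldots+G_n\in\Upop\{x\}\subseteq\Kmin$. Each ultracontact in $\calF$ contains $\Kmin$ by Theorem~\ref{th:smallest-UC}. Applying \eqref{K4-general} in each member $\scrK_j$ of $\calF$ yields some $G_i\in\scrK_j$. However, the index $i$ might depend on $j$, so this is not enough by itself. To fix this, I note that $G_1+\ldots+G_n\in\Kmin$ means there is $y\neq\zero$ with $y\leq g_1+\ldots+g_n$ for all choices $g_i\in G_i$. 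I claim some single $G_i$ then has a non-zero lower bound. This is exactly the argument in Theorem~\ref{th:smallest-UC}: if every $\bigwedge G_i=\zero$, then $\bigwedge(G_1+\ldots+G_n)=\zero$ by \citep[Lemma 1.33]{kop89}, extended to $n$ summands by induction. That chosen $G_i$ lies in $\Kmin\subseteq\scrK_0$.

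\emph{\eqref{K3}.} Suppose $F\in\scrK$, $F\preceq G\neq\emptyset$, and $G\preceq G_1+\ldots+G_n$. By transitivity (Proposition~\ref{prop:support-very-basic}(4)), $F\preceq G_1+\ldots+G_n$, so some $G_i\in\scrK_0$. Hence $G\in\scrK$.

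\emph{\eqref{K4}.} This is the step I expect to be the main obstacle. Suppose $F+G\in\scrK$ but $F,G\notin\scrK$. Then there are witnesses $F\preceq F_1+\ldots+F_n$ and $G\preceq G_1+\ldots+G_m$ with no $F_i$ and no $G_k$ in $\scrK_0$. By Proposition~\ref{prop:support-very-basic}(3),
\[
F+G\preceq (F_1+\ldots+F_n)+(G_1+\ldots+G_m).
\]
By associativity (part (5)), the right-hand side is the sum of the $n+m$ sets $F_1,\ldots,F_n,G_1,\ldots,G_m$. Since $F+G\in\scrK$, one of these summands must lie in $\scrK_0$, which is a contradiction. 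Thus the finite-sum formulation in the definition of $\scrK$ is exactly what makes this step go through.

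\emph{Maximality.} Let $\scrK'\subseteq\scrK_0$ be an ultracontact, and let $F\in\scrK'$ with $F\preceq G_1+\ldots+G_n$. The sum $G_1+\ldots+G_n$ is non-empty because each $G_i$ is. By \eqref{K3} it lies in $\scrK'$, and by \eqref{K4-general} some $G_i\in\scrK'\subseteq\scrK_0$. Hence $F\in\scrK$, so $\scrK'\subseteq\scrK$. Combined with $\scrK\subseteq\scrK_0$, this shows $\scrK$ is the greatest ultracontact below every member of $\calF$, that is, $\scrK=\bigwedge\calF$.
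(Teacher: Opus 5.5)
Your proposal is correct and follows the paper's proof essentially step for step. The shared steps are the inclusion $\scrK\subseteq\bigcap\calF$ via $F\preceq F$, the Lemma~1.33 argument that yields a single $G_i\in\Kmin$ for \eqref{K2}, merging the witnesses with Proposition~\ref{prop:support-very-basic}(3) for \eqref{K4}, and maximality via \eqref{K3} and \eqref{K4-general}. The one detail to add, which the paper states explicitly, is in \eqref{K4}: $F+G\in\scrK$ forces $F\neq\emptyset\neq G$, so $F,G\notin\scrK$ genuinely produces failing witnesses rather than reflecting emptiness.
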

\begin{proof}
Let $\scrK \subseteq 2^B$ be as above.
We first show that $\scrK \subseteq \bigcap \calF$. Consider $F \in \scrK$. Since $F \preceq F$, the definition of $\scrK$ yields $F \in \bigcap \calF$. Thus, $\scrK \subseteq \bigcap \calF$. We now prove that $\scrK$ is an ultracontact.

\smallskip

\eqref{K0} We have that $\emptyset \notin \scrK$ by the definition of $\scrK$.

\smallskip

\eqref{K1} Consider $F \in 2^{B}$ such that $\zero \in F$. Suppose that $F \in \scrK$. Since $\scrK \subseteq \bigcap \calF$ and $\calF$ is not empty, there is $\scrK' \in \calF$ such that $F \in \scrK'$. As $\scrK'$ satisfies \eqref{K1}, $F$ cannot be among its elements, we have~a contradiction. Thus, $F \notin \scrK$.

\smallskip

\eqref{K2} Let $x \in B$ with $x \neq\zero$. We need to show that $\{x\} \in \scrK$. Consider $G_1, \dots, G_n \in 2^{B}_+$ such that $\{x\} \preceq G_1+ \ldots + G_n$. So, $x$ is a nonzero lower bound of $G_1+ \ldots + G_n$. If $\bigwedge G_i=\zero$ for every $i \leq n$, \citep[Lemma 1.33]{kop89} implies that $\bigwedge (G_1+ \ldots + G_n)=\zero$, which is not possible because $G_1+ \ldots + G_n$ has a nonzero lower bound. Therefore, one of the $G_i$ has a nonzero lower bound, and so is an element of $\Kmin$. But $\Kmin$ is a subset of every ultracontact on $B$, so---by the assumption---$\Kmin\subseteq\bigcap\calF$. In consequence, $G_i \in \bigcap \calF$. We conclude that $\{x\} \in \scrK$.

\smallskip

\eqref{K3} Let $F_1 \in \scrK$ and $F_2 \in 2^{B}_+$ be such that $F_1 \preceq F_2$. Consider $G_1, \dots ,G_n \in 2^{B}_+$ and suppose that $F_2 \preceq G_1+ \ldots + G_n$. Since $F_1 \preceq F_2$ and $\preceq$ is transitive, we obtain that $F_1 \preceq G_1+ \ldots + G_n$. From the first assumption it follows that $G_i \in \bigcap \calF$ for some $i \leq n$. Therefore, $F_2 \in \scrK$.

\smallskip

\eqref{K4} Let $F_1,F_2 \in 2^{B}$ and assume that $F_1+F_2 \in \scrK$. Then $F_1\neq \emptyset\neq F_2$, as otherwise $\emptyset \in \scrK$, which is not possible by \eqref{K0}. Suppose  towards contradiction that $F_1 \notin \scrK$ and $F_2 \notin \scrK$.  Then there exist $G_1, \dots, G_n,H_1, \dots, H_m \in 2^{B}_+$ such that
\[
F_1 \preceq G_1+ \ldots + G_n \text{ and } F_2 \preceq H_1+ \ldots + H_m,
\]
but none of $G_1, \ldots, G_n,H_1, \ldots, H_m$ is in $\bigcap\calF$.
It follows from Proposition~\ref{prop:support-very-basic}(3) that
\[
F_1 + F_2 \preceq G_1+ \ldots + G_n + H_1 + \ldots + H_m.
\]
As $F_1+F_2 \in \scrK$, the definition of $\scrK$ implies that one of $G_1, \dots, G_n,H_1, \dots, H_m$ is in $\bigcap \calF$, which is a~contradiction.
This means that $F_1 \in \scrK$ or $F_2 \in \scrK$. We conclude that $\scrK$ is an ultracontact on~$B$.

\smallskip

It remains to show that $\scrK$ is the largest ultracontact on $B$ such that $\scrK\subseteq\bigcap\calF$. Let then $\scrK'$ be an ultracontact such that $\scrK' \subseteq \bigcap \calF$. We prove that $\scrK' \subseteq \scrK$. Consider $F \in \scrK'$ and $G_1, \dots, G_n \in 2^{B}_+$ with $F \preceq G_1+ \ldots+ G_n$. As $\scrK'$ satisfies \eqref{K1}, we have $F \neq \emptyset$. Since $\scrK'$ satisfies \eqref{K3} and \eqref{K4}, we obtain that $G_i \in \scrK'$ for some $i \leq n$. As  $\scrK' \subseteq \bigcap \calF$, it follows that $G_i \in \bigcap \calF$. Then the definition of $\scrK$ yields $F \in \scrK$. Thus $\scrK' \subseteq \scrK$, which ends the whole proof.
\end{proof}

\pagebreak

For chains of ultracontacts, infima coincide with set-theoretical intersections.
\begin{theorem}
    If $\calC$ is a descending chain of ultracontacts, then $\bigcap\calC$ is an ultracontact, i.e., $\bigwedge\calC=\bigcap\calC$.
\end{theorem}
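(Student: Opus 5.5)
We verify directly that $\bigcap\calC$ satisfies \eqref{K0}--\eqref{K4}. The equality $\bigwedge\calC=\bigcap\calC$ then follows at once: the meet in $\UC(B)$ is the largest ultracontact contained in $\bigcap\calC$, and here $\bigcap\calC$ is itself an ultracontact. We assume $\calC$ is non-empty, as in Theorem~\ref{th:meet-ultracontact}. Alternatively, one could show that $\bigcap\calC$ coincides with the family $\scrK$ from Theorem~\ref{th:meet-ultracontact}, but the direct check is simpler.

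Axioms \eqref{K0}, \eqref{K1} and \eqref{K3} are universal or monotone conditions, so they are inherited by intersections of arbitrary non-empty families of ultracontacts. For \eqref{K0} and \eqref{K1}, it suffices that some member of $\calC$ excludes the relevant $F$. For \eqref{K3}, if $F\in\bigcap\calC$ and $F\preceq G\neq\emptyset$, then $G\in\scrK'$ for every $\scrK'\in\calC$. Axiom \eqref{K2} holds because $\Kmin\subseteq\scrK'$ for every $\scrK'\in\calC$ by Theorem~\ref{th:smallest-UC}, and $\{x\}\in\Kmin$ for $x\neq\zero$.

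The only step that needs the chain hypothesis is \eqref{K4}, and it is the one where arbitrary intersections can fail. Suppose $F+G\in\bigcap\calC$ but $F\notin\bigcap\calC$ and $G\notin\bigcap\calC$. Then there are $\scrK_1,\scrK_2\in\calC$ with $F\notin\scrK_1$ and $G\notin\scrK_2$. Since $\calC$ is a chain, one of them, say $\scrK_1$, is contained in the other. Hence $G\notin\scrK_1$ as well, because $\scrK_1\subseteq\scrK_2$. Now $F+G\in\scrK_1$, yet neither $F$ nor $G$ belongs to $\scrK_1$, which contradicts \eqref{K4} for $\scrK_1$. If instead $\scrK_2\subseteq\scrK_1$, the same argument works with $\scrK_2$. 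Therefore $F\in\bigcap\calC$ or $G\in\bigcap\calC$.

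Note that the argument uses only that $\calC$ is linearly ordered (indeed, directed downward), so "descending" plays no further role. With all axioms verified, $\bigcap\calC\in\UC(B)$. Since every ultracontact below all members of $\calC$ is contained in $\bigcap\calC$, we obtain $\bigwedge\calC=\bigcap\calC$.
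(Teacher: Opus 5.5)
Your proof is correct and essentially matches the paper's: the only nontrivial axiom is \eqref{K4}, and, like the paper, you use the chain property to compare the two witnesses $\scrK_1$ (with $F\notin\scrK_1$) and $\scrK_2$ (with $G\notin\scrK_2$) and contradict \eqref{K4} in the smaller one. Your explicit checks of \eqref{K0}--\eqref{K3}, the assumption that $\calC$ is non-empty, and the remark that downward directedness already suffices are all correct; the paper leaves these points implicit.
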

\begin{proof}
The non-trivial condition to verify is \eqref{K4}. To this end suppose that $F+G$ is in every ultracontact in $\calC$ but for $F$ there is a $\scrK_1\in\calC$ with $F\notin\scrK_1$ and for $G$ there is a $\scrK_2\in\calC$ with $G\notin\scrK_2$. Since $F+G$ is in $\scrK_1\cap\scrK_2$ by assumption, we have that $G\in\scrK_1$ and $F\in\scrK_2$.  However, $\calC$ is a chain, so either $\scrK_1\subseteq\scrK_2$, and thus $G\in\scrK_2$, or $\scrK_2\subseteq\scrK_1$ in which case $F\in\scrK_1$. Thus, in both cases we have a~contradiction.
\end{proof}
Further, in Theorem~\ref{th:meet-of-UC-is-not-intersection}, we will prove that in general the infimum of a family of ultracontacts is not the set-theoretical intersection. For now, we will make use of Theorem \ref{th:meet-ultracontact} to show that the lattice of all ultracontacts is a complete co-Heyting algebra (a co-frame, in the terminology of frames and locales). For this, we will need a lemma.
\begin{lemma}\label{lem:aux-for-co-Heyting} If $G,H_1,\ldots H_m$ are subsets of an algebra $B$, then
\[
    G\cup (H_1+\ldots+H_m)\preceq(G\cup H_1)+\ldots+(G\cup H_m).
\]
\end{lemma}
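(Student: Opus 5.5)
We unfold Definition~\ref{def:leqUCA} directly. We must show that every element of the right-hand side $(G\cup H_1)+\ldots+(G\cup H_m)$ lies above some element of $G\cup(H_1+\ldots+H_m)$. So we fix an arbitrary element $z$ of the right-hand side. By the definition of the $m$-fold sum, $z=z_1+\ldots+z_m$ with $z_i\in G\cup H_i$ for every $i\leqslant m$. The proof then splits into two cases, according to whether some summand comes from $G$.

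\emph{Case 1.} Suppose there is an index $i$ with $z_i\in G$. Then $z_i\leq z_1+\ldots+z_m=z$. Since $z_i\in G\subseteq G\cup(H_1+\ldots+H_m)$, this $z_i$ is the required element below $z$.

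\emph{Case 2.} Suppose that $z_i\notin G$ for every $i\leqslant m$. Then $z_i\in H_i$ for all $i$, so $z$ itself belongs to $H_1+\ldots+H_m$. Hence $z\leq z$ with $z\in G\cup(H_1+\ldots+H_m)$, which is again what we need.

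Both cases give an element of the left-hand side below $z$, so the support relation holds. There is no real obstacle here, only degenerate situations to check. If some $H_i$ is empty and $G$ is nonempty, the right-hand side may be nonempty while $H_1+\ldots+H_m$ is empty; the case analysis still works because Case 2 cannot occur, since $z_i\in G\cup H_i=G$. If the right-hand side is empty, the claim is vacuous. The argument is therefore a direct element chase and needs no earlier results beyond the definitions of $\preceq$ and of $+$ on subsets.
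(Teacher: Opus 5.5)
Your proof is correct and is essentially the paper's own argument: the same element chase on $z=z_1+\ldots+z_m$ with $z_i\in G\cup H_i$, where either some summand lies in $G$ and sits below $z$, or all summands lie in the $H_i$ and so $z\in H_1+\ldots+H_m$. Your split (some $z_i\in G$ versus none) is just the complementary phrasing of the paper's (all $z_i\in H_i$ versus some $z_{i_0}\notin H_{i_0}$), and your remarks on the degenerate cases are accurate.
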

\begin{proof}
    Assume that
    \[
        x_1+\ldots+x_m\in (G\cup H_1)+\ldots+(G\cup H_m),
    \]
    where for each $i\leqslant m$, $x_i\in G\cup H_i$.
    If for all $i\leqslant m$, $x_i\in H_i$, then $x_1+\ldots+x_m\in H_1+\ldots+H_m$, the more so
    \[
        x_1+\ldots+x_m\in G\cup (H_1+\ldots+H_m).
    \]
    Otherwise, there is a $i_0\leqslant m$ such that $x_{i_0}\notin H_{i_0}$, and so $x_{i_0}\in G$. But then
    \[
        x_{i_0}\leqslant x_1+\ldots + x_m
    \]
    and $x_{i_0}\in G\cup(H_1+\ldots+H_m)$. In either case, we have shown that $G\cup (H_1+\ldots+H_m)$ supports $(G\cup H_1)+\ldots+(G\cup H_m)$.
\end{proof}

\begin{theorem}\label{th:UC-co-frame}
    The lattice of ultracontacts satisfies infinite meet distributivity, i.e., \[\scrK\cup\bigwedge_{i\in I}\scrK_i=\bigwedge_{i\in I}(\scrK\cup\scrK_i).\]
\end{theorem}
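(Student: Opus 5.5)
The inclusion $\scrK\cup\bigwedge_{i\in I}\scrK_i\subseteq\bigwedge_{i\in I}(\scrK\cup\scrK_i)$ holds in every complete lattice. Joins in $\UC(B)$ are unions (Theorem~\ref{th:lattice-of-UCAs}), so $\scrK\cup\scrK_i$ is an ultracontact. It contains both $\scrK$ and $\bigwedge_{j}\scrK_j$, and hence $\scrK\cup\bigwedge_j\scrK_j\le\scrK\cup\scrK_i$ for every $i$. Taking the meet over $i$ gives the inclusion. The work is in the reverse inclusion. For that I would use the explicit description of meets from Theorem~\ref{th:meet-ultracontact}, applied to the family $\{\scrK\cup\scrK_i\mid i\in I\}$ (assuming $I\neq\emptyset$; the empty case is trivial since then both sides are $\Kmax$).

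Let $F\in\bigwedge_i(\scrK\cup\scrK_i)$ and assume $F\notin\scrK$. I want to show $F\in\bigwedge_i\scrK_i$. By Theorem~\ref{th:meet-ultracontact} this means: whenever $F\preceq H_1+\ldots+H_m$ with all $H_j\in 2^B_+$, some $H_j$ lies in $\bigcap_i\scrK_i$. So fix such $H_1,\ldots,H_m$ and suppose, towards a contradiction, that no $H_j$ is in $\bigcap_i\scrK_i$.

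The key step is to apply Lemma~\ref{lem:aux-for-co-Heyting} with $G\defeq F$. Since $F\preceq F$ and $F\preceq H_1+\ldots+H_m$, we have $F\preceq F\cup(H_1+\ldots+H_m)$, because each element of that union has a lower bound in $F$. The lemma then gives
\[
F\preceq (F\cup H_1)+\ldots+(F\cup H_m).
\]
Each $F\cup H_j$ is non-empty. So the defining property of $F\in\bigwedge_i(\scrK\cup\scrK_i)$ yields some $j$ with $F\cup H_j\in\bigcap_i(\scrK\cup\scrK_i)$.

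Now fix that $j$ and pick $i$ with $H_j\notin\scrK_i$. We have $F\cup H_j\in\scrK\cup\scrK_i$. If $F\cup H_j\in\scrK$, then Lemma~\ref{lem:K-down-closed} (closure under non-empty subsets) gives $F\in\scrK$, contradicting our assumption. If instead $F\cup H_j\in\scrK_i$, the same lemma gives $H_j\in\scrK_i$, contradicting the choice of $i$. Hence some $H_j$ lies in $\bigcap_i\scrK_i$, and so $F\in\bigwedge_i\scrK_i$.

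The main obstacle is handling the fact that different $H_j$ may fail in different $\scrK_i$. This is exactly why we pass to the sets $F\cup H_j$: we get one $j$ that works for all $i$ simultaneously, and only then choose $i$. One point to check is that the meet formula of Theorem~\ref{th:meet-ultracontact} quantifies over non-empty $G_j$, which holds here since $H_j\neq\emptyset$.
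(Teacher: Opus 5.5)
Your proof is correct and follows the paper's argument. Both apply Lemma~\ref{lem:aux-for-co-Heyting} with $G\defeq F$, use the meet description of Theorem~\ref{th:meet-ultracontact} for the family $\{\scrK\cup\scrK_i\mid i\in I\}$, and exploit that $F\cup H_j$ is $\preceq$-below both $F$ and $H_j$. The paper phrases this contrapositively, showing that no $F\cup H_j$ lies in $\scrK\cup\bigcap_i\scrK_i=\bigcap_i(\scrK\cup\scrK_i)$, where you argue by contradiction; your explicit justification of the easy inclusion and of the case $I=\emptyset$ fills in details the paper leaves implicit.
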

\begin{proof}
Let $\{\scrK_i\mid i\in I\}\cup\{\scrK\}$ be a family of ultracontacts. The inclusion
\[
\scrK\cup\bigwedge_{i\in I} \scrK_i\subseteq \bigwedge_{i\in I} (\scrK\cup\scrK_i)
\]
is clear. For the reverse one, suppose that $G\in \bigwedge_{i\in I} \scrK \cup \scrK_i$. Thus, $G$ is non-empty and contraposing the condition from Theorem~\ref{th:meet-ultracontact} we obtain that
\begin{equation}\label{e: meet of Si cup T}\tag{$\dagger$}
    \text{$G\npreceq G_1+ \ldots+G_n$ for all non-empty $G_1,\ldots,G_n\notin \bigcap_{i\in I} (\scrK \cup \scrK_i)$.}
\end{equation}
Let us assume that $G\notin \scrK$. We will prove that $G\in \bigwedge_{i\in I} \scrK_{i}$, that is, we are going to show that
\begin{equation}\label{e: meet of Si}\tag{$\ddagger$}
    \text{$G\npreceq H_1+ \ldots+H_m $ for all non-empty $H_1,\ldots,H_m\notin \bigcap_{i\in I} \scrK_i$.}
\end{equation}
Suppose, then, that there is a family $H_1,\ldots,H_m\notin \bigcap_{i\in I} \scrK_i$. Since $G\notin \scrK$, and since $G\cup H_i\preceq G$ for every $i\leqslant m$, none of the unions $G\cup H_1,\ldots,G\cup H_m$ is in $\scrK$, by \eqref{K3}. As each $\scrK_i$ is upward-closed with respect to $\preceq$, then so is their intersection $\bigcap_{i\in I} \scrK_i$. Once again, then, since $H_1,\ldots,H_m\notin \bigcap_{i\in I} \scrK_i$, and $G\cup H_i\preceq H_i$ for all $i\leqslant m$, also $G\cup H_1,\ldots,G\cup H_m\notin \bigcap_{i\in I} \scrK_i$. Therefore
\[
G\cup H_1,\ldots,G\cup H_m\notin \scrK\cup \bigcap_{i\in I}\scrK_i = \bigcap_{i\in I} (\scrK \cup \scrK_i).
\]
By \eqref{e: meet of Si cup T}, it must be the case that
\[
G \npreceq (G\cup H_1)+\ldots+ (G\cup H_m).
\]
From this, by Lemma~\ref{lem:aux-for-co-Heyting} and by the transitivity of $\preceq$ we obtain
 \[
 G \npreceq G\cup (H_1+\ldots+H_m).
 \]
 By the definition, there is an $x\in G\cup (H_1+\ldots+H_m)$ such that $g\nleq x$ for all $g\in G$. As $x$ cannot be in $G$, then it is in $H_1+\ldots+H_m$, and so
\[
G \npreceq H_1+\ldots+H_m.
\]
This means that \eqref{e: meet of Si} holds, as desired.
\end{proof}

From Theorems \ref{th:lattice-of-UCAs} and \ref{th:UC-co-frame} we obtain a~result analogous to the one about the lattice of contact relations \cite[Corollary 1]{dw_cl}.

\begin{corollary}
    The lattice of all ultracontacts of any Boolean algebra $B$ is a complete co-Heyting algebra.
\end{corollary}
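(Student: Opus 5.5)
The corollary asks for two facts: that $\UC(B)$ is a complete lattice, and that it satisfies the infinite distributive law characterizing co-Heyting algebras (co-frames). Both are already available, so the plan is simply to assemble them.

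\emph{Completeness.} By Theorem~\ref{th:lattice-of-UCAs}, $\UC(B)$ is a complete lattice whose joins are set-theoretical unions, and whose arbitrary meets $\bigwedge$ are described in Theorem~\ref{th:meet-ultracontact}. In particular, the binary join $\scrK\vee\scrK'$ equals $\scrK\cup\scrK'$.

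\emph{Distributivity.} A complete lattice $L$ is a co-Heyting algebra exactly when, for every $a\in L$, the map $a\vee(-)$ preserves arbitrary meets. In that case the co-implication (difference)
\[
b\smallsetminus a\defeq\bigwedge\{c\in L\mid b\leq a\vee c\}
\]
is well defined, and it satisfies $b\smallsetminus a\leq c\iff b\leq a\vee c$. Since joins in $\UC(B)$ are unions, the identity $\scrK\cup\bigwedge_{i\in I}\scrK_i=\bigwedge_{i\in I}(\scrK\cup\scrK_i)$ from Theorem~\ref{th:UC-co-frame} is precisely this law.

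\emph{Verifying the residuation.} Fix $\scrK$ and $\scrK'$, and set $\scrK'\smallsetminus\scrK\defeq\bigwedge\{\scrM\in\UC(B)\mid\scrK'\subseteq\scrK\cup\scrM\}$. The family being intersected is non-empty, because it contains $\Kmax$ by Theorem~\ref{th:largest-UC}. Distributivity then gives
\[
\scrK\cup(\scrK'\smallsetminus\scrK)=\bigwedge\{\scrK\cup\scrM\mid\scrK'\subseteq\scrK\cup\scrM\}\supseteq\scrK',
\]
so $\scrK'\smallsetminus\scrK$ itself lies in the family. The universal property $\scrK'\smallsetminus\scrK\subseteq\scrM\iff\scrK'\subseteq\scrK\cup\scrM$ follows: the forward direction holds because unions are monotone, and the backward direction holds by the definition of the meet.

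\emph{Edge case.} Theorem~\ref{th:UC-co-frame} presupposes a non-empty index family, because Theorem~\ref{th:meet-ultracontact} does. For $I=\emptyset$ the empty meet is the top element $\Kmax$, and $\scrK\cup\Kmax=\Kmax$ by Theorem~\ref{th:largest-UC}, so the identity holds there as well.

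There is no real obstacle here; the only point needing care is this empty-family case, which is handled separately above.
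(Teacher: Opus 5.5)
Your proof is correct and follows the paper's route: the paper derives the corollary directly from Theorem~\ref{th:lattice-of-UCAs} (completeness, with unions as joins) and Theorem~\ref{th:UC-co-frame} (the law $\scrK\cup\bigwedge_{i\in I}\scrK_i=\bigwedge_{i\in I}(\scrK\cup\scrK_i)$). Your explicit construction of the co-implication and your handling of the empty index family are correct details that the paper leaves implicit.
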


\section{New ultracontacts from old ones}\label{sec:new-UCs}

In this section, we will focus on the problem of how we can expand existing ultracontacts to obtain <<bigger>> ones.

 If $\scrK$ is an ultracontact on $B$ and $M\subseteq B$, let
 \[
  \scrK^{M}\defeq\scrK\cup \{F\in\powerne{B}\mid F\subseteq M\}.
\]
If $\calM$ is a collection of subsets of $B$, we put
\[
    \scrK^{\calM}\defeq\bigcup\{\scrK^{M}\mid M\in\calM\}.
\]

\begin{lemma}\label{lem:K^M-the-smallest}
    Let $\klam{B,\scrK}$ be a UCA, and let $M$ be a subset of $B$.
    \begin{enumerate}[label=(\arabic*),ref=\arabic*,itemsep=0pt]
        \item If $M\in\scrK$, then $\scrK^{M}=\scrK$.
        \item If $M$ has a non-zero lower bound, then $\scrK^{M}=\scrK$.
        \item If $\scrK^M$ is an ultracontact and $M\neq\emptyset$, then it is the smallest ultracontact extending $\scrK$ and containing~$M$.
    \end{enumerate}
\end{lemma}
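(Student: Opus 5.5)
All three parts reduce to the downward closure of ultracontacts (Lemma~\ref{lem:K-down-closed}) together with \eqref{K3}. I would take the items in order, since (2) follows from (1).

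For (1), the inclusion $\scrK\subseteq\scrK^{M}$ holds by definition. For the reverse, take an element $F$ of $\scrK^M\setminus\scrK$. Then $F$ is a non-empty subset of $M$. Since $M\in\scrK$, Lemma~\ref{lem:K-down-closed} (closure of $\scrK$ under non-empty subsets of its members) gives $F\in\scrK$, so $\scrK^M=\scrK$.

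For (2), I would reduce to (1). Let $x\neq\zero$ be a lower bound of $M$. A lower bound of the empty set exists trivially, so I split on whether $M$ is empty.
\begin{itemize}
\item If $M=\emptyset$, there are no non-empty subsets of $M$, so $\scrK^M=\scrK$ directly.
\item If $M\neq\emptyset$, then $M\in\Kmin$ by the definition of $\Kmin$. Theorem~\ref{th:smallest-UC} gives $\Kmin\subseteq\scrK$, so $M\in\scrK$, and (1) applies.
\end{itemize}

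For (3), assume $\scrK^M$ is an ultracontact and $M\neq\emptyset$. Then $M$ is a non-empty subset of itself, so $M\in\scrK^M$, and $\scrK\subseteq\scrK^M$ is immediate. For minimality, let $\scrK'$ be any ultracontact with $\scrK\subseteq\scrK'$ and $M\in\scrK'$. Each element of $\scrK^M$ lies either in $\scrK\subseteq\scrK'$ or is a non-empty subset of $M$. In the latter case it lies in $\scrK'$ by Lemma~\ref{lem:K-down-closed} applied to $\scrK'$. Hence $\scrK^M\subseteq\scrK'$.

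There is no real obstacle here. The only point requiring care is the degenerate case $M=\emptyset$ in (2), which is why I handle it separately. The hypothesis $M\neq\emptyset$ in (3) is exactly what guarantees $M\in\scrK^M$.
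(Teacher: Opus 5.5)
Your proof is correct and follows essentially the same route as the paper: (1) and (3) are exactly Lemma~\ref{lem:K-down-closed}, applied to $\scrK$ and to $\scrK'$ respectively. The only cosmetic difference is in (2), where you pass through $M\in\Kmin\subseteq\scrK$ and then invoke (1). The paper instead observes directly that every non-empty subset of $M$ inherits the non-zero lower bound and so lies in $\Kmin\subseteq\scrK$, which also covers $M=\emptyset$ without a case split.
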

\begin{proof}
(1) If $F\subseteq M$ and $F\neq \emptyset$, then $F\in \scrK$, by Lemma \ref{lem:K-down-closed}. Then, $2^M_+\subseteq \scrK$.

\smallskip

(2) If $M$ has a nonzero lower bound, so does every nonempty subset of $M$. In consequence, $2^M_+\subseteq \Kmin\subseteq \scrK$.

\smallskip

(3) Suppose $\scrK\subseteq\scrK'$ and $M\in\scrK'$. By Lemma~\ref{lem:K-down-closed} every non-empty subset of $M$ is an element of $\scrK'$. In consequence $\scrK^M\subseteq\scrK'$.
\end{proof}

\begin{theorem}\label{th:UC-from-grills}
    For any ultracontact algebra $\klam{B,\scrK}$ and any collection of grills~$\calG$, $\scrK^{\calG}$ is an ultracontact. Moreover, $\scrK^{\calG}$ is the smallest ultracontact extending $\scrK$ and containing all the grills from $\calG$.
\end{theorem}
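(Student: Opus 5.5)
We verify the axioms \eqref{K0}--\eqref{K4} for
\[
\scrK^{\calG}=\scrK\cup\{F\in\powerne{B}\mid F\subseteq G\text{ for some }G\in\calG\}.
\]
Minimality then follows by the argument of Lemma~\ref{lem:K^M-the-smallest}(3). If $\scrK'\supseteq\scrK$ is an ultracontact containing every grill in $\calG$, then Lemma~\ref{lem:K-down-closed} puts every non-empty subset of each $G\in\calG$ into $\scrK'$, so $\scrK^{\calG}\subseteq\scrK'$. The case $\calG=\emptyset$ is trivial, since then $\scrK^{\calG}=\scrK$.

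The first three axioms are routine. For \eqref{K0}, $\emptyset$ is neither in $\scrK$ nor among the added sets, because the added sets are non-empty. For \eqref{K1}, a set containing $\zero$ is not in $\scrK$; it is also not a subset of a grill, since $\zero\notin G$ by \eqref{Gr0}. For \eqref{K2}, $\{x\}$ already lies in $\scrK$.

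For \eqref{K3}, suppose $F\preceq H\neq\emptyset$ and $F\in\scrK^{\calG}$. If $F\in\scrK$, then $H\in\scrK$ and we are done. Otherwise $F\subseteq G$ for some grill $G$. By Proposition~\ref{prop:support-very-basic}(6) we have $H\subseteq\upop F\subseteq\upop G=G$, using that $G$ is a stack by \eqref{Gr1}. Hence $H$ is a non-empty subset of $G$ and so $H\in\scrK^{\calG}$.

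The step I expect to carry the real content is \eqref{K4}, which is where the primeness of grills enters. Suppose $F+H\in\scrK^{\calG}$. Then $F+H\neq\emptyset$, so both $F$ and $H$ are non-empty. If $F+H\in\scrK$, then \eqref{K4} for $\scrK$ gives $F\in\scrK$ or $H\in\scrK$. Otherwise $F+H\subseteq G$ for some $G\in\calG$. Proposition~\ref{prop:prime-for-big-union} then yields $F\subseteq G$ or $H\subseteq G$. Whichever holds, that set is a non-empty subset of $G$ and hence lies in $\scrK^{\calG}$. This concludes the verification, and together with the minimality argument above, the proof.
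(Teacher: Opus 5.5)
Your proof is correct and follows the paper's. In both, \eqref{K3} uses that grills are stacks, \eqref{K4} uses Proposition~\ref{prop:prime-for-big-union}, and minimality uses downward closure of $\scrK'$; you cite Lemma~\ref{lem:K-down-closed} where the paper applies \eqref{K3} to $G\preceq M$, which is the same argument. The only difference is cosmetic: the paper first reduces to a single grill via closure of ultracontacts under unions (Theorem~\ref{th:lattice-of-UCAs}), whereas you verify the axioms for $\scrK^{\calG}$ directly, which works because each case only ever involves one grill.
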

\begin{proof}
By Theorem~\ref{th:lattice-of-UCAs}, it is enough to show that for any $G\in\calG$, $\scrK^G$ is an ultracontact. The axioms \eqref{K0}--\eqref{K2} are obvious.

\smallskip

For \eqref{K3} suppose that $F\preceq M\neq\emptyset$ and $F \in \scrK^G$. If $F \in \scrK$, then $M\in \scrK \subseteq \scrK^G$ because $\scrK$ is an ultracontact. Consider the case when $F \subseteq G$. By Proposition~\ref{prop:support-very-basic}(6), $M\subseteq\upop F$, and $\upop F\subseteq G$, since $G$ is a~grill. Thus $M\subseteq G$ and so $M\in\scrK^G$.

\smallskip

For \eqref{K4}, assume that $F+H\in\scrK^G$. If $F+H\in\scrK$, we are done, as $\scrK$ is an ultracontact which is a subset of $\scrK^G$. If $F+H\subseteq G$, then by Proposition~\ref{prop:prime-for-big-union}, $F\subseteq G$ or $H\subseteq G$. Since $F+H\neq\emptyset$, both $F$ and $H$ are non-empty, so $F\in\scrK^G$ or $H\in\scrK^G$, as required.

\smallskip

For the proof of the second part of the theorem, assume that (a) $\scrK\subseteq\scrK'$ and let (b)~$\calG\subseteq\scrK'$. If $M\in\scrK^{\calG}$, then either $M\in\scrK$ or there is a~grill $G\in\calG$ such that $\emptyset\neq M\subseteq G$. In the first case, $M\in\scrK'$ by~(a). In the second case, $G\preceq M$, so $M\in\scrK'$, as $G$ is an element of $\scrK'$ by~(b). So, in both cases $M \in \scrK'$, and hence $\scrK^{\calG}\subseteq\scrK'$, as desired.
\end{proof}

\begin{lemma}\label{lem:intersection-for-stacks}
For any $U,V\in\Stack(B)$, $U\cap V=U+V$.
\end{lemma}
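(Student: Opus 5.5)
We prove the two inclusions separately, using only upward closure of stacks and the definition of $U+V=\{u+v\mid u\in U\tand v\in V\}$.

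For $U+V\subseteq U\cap V$, take an element $u+v$ with $u\in U$ and $v\in V$. Since $u\leq u+v$ and $U$ is upward closed, $u+v\in U$. Symmetrically, $v\leq u+v$ gives $u+v\in V$. Hence $u+v\in U\cap V$.

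For $U\cap V\subseteq U+V$, take $x\in U\cap V$. Idempotence of the Boolean join gives $x=x+x$, and $x\in U$, $x\in V$, so $x\in U+V$.

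Neither step is a real obstacle. The only point to check is the degenerate case: if $U$ or $V$ is empty, then $U+V=\emptyset$ (as noted after the definition of $F+G$), and $U\cap V=\emptyset$ as well, so the equality still holds. Thus the equality holds for all stacks, including the empty one.

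Read through Proposition~\ref{prop:+}, the lemma shows that on stacks the join $[U+V]$ of the similarity lattice is represented by the lattice meet $U\cap V$ of $\Stack(B)$ from Proposition~\ref{thm:stacklattice}. This is presumably how it will be used later, e.g.\ in connection with Lemma~\ref{l: grills are prime} and Proposition~\ref{prop:prime-for-big-union}.
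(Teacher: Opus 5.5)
Your proof is correct and matches the paper's: it proves one inclusion by idempotence, $x=x+x$, and the other by upward closure, since $u,v\leq u+v$. The separate check of the empty case is harmless but unnecessary, because both element-wise arguments already cover it vacuously.
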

\begin{proof}
($\subseteq$) If $x\in U\cap V$, then $x=x+x\in U+V$.

\smallskip

($\supseteq$) Let $w\in U+V$, which means that for some $x\in U$ and $y\in V$, $w=x+y$. As $x,y\leq w$ we have that $w\in\upop U$ and $w\in\upop V$.
By the assumption, both $U$ and $V$ are stacks, so $w\in U\cap V$, as required.
\end{proof}

\begin{theorem}\label{th:M-not-grill-not-closed-for-infima}
     Let $\klam{B,\scrK}$ be an ultracontact algebra and $\emptyset\neq M\in\Stack(B)$ be such that $M\notin\scrK$. If $M$ is not a~grill, then there are ultracontacts $\scrK_1$ and $\scrK_2$ extending $\scrK$ and such that $M\in\scrK_1\cap\scrK_2$ but $M\notin\scrK_1\wedge\scrK_2$. In consequence,
    the collection of all ultracontacts that contain $M$ is not closed under infima.
\end{theorem}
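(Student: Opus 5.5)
The plan is to exploit the failure of primality of $M$. First we dispose of a degenerate case. If $\zero\in M$, then no ultracontact can contain $M$ by \eqref{K1}. So we assume, as the statement implicitly does, that $\zero\notin M$. Since $M$ is a non-empty stack, $\one\in M$, so (\ref{Gr0}) and (\ref{Gr1}) hold. Hence $M$ fails to be a grill only through (\ref{Gr2}): there are $a,b\in B$ with $a+b\in M$ but $a,b\notin M$. In particular $a,b\neq\one$.

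Next we build the two ultracontacts from grills, so that Theorem~\ref{th:UC-from-grills} applies. Put $G_a\defeq B\setminus\da{a}$ and $G_b\defeq B\setminus\da{b}$. Since $a\neq\one$, $\da{a}$ is a proper ideal, so $G_a$ is a grill by Lemma~\ref{lem:id-fil-gr-bijection}; likewise for $G_b$. We have $M\subseteq G_a$: if $m\in M$ and $m\leq a$, then $a\in M$ because $M$ is a stack, which is a contradiction. Similarly $M\subseteq G_b$. Let $\scrK_1\defeq\scrK^{G_a}$ and $\scrK_2\defeq\scrK^{G_b}$. By Theorem~\ref{th:UC-from-grills} these are ultracontacts extending $\scrK$. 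Since $\emptyset\neq M\subseteq G_a$ and $M\subseteq G_b$, we get $M\in\scrK_1\cap\scrK_2$.

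To show $M\notin\scrK_1\wedge\scrK_2$, we use the characterization of meets in Theorem~\ref{th:meet-ultracontact} with the witnesses $M_a\defeq M\cup\ua{a}$ and $M_b\defeq M\cup\ua{b}$. Both are non-empty stacks. By Lemma~\ref{lem:intersection-for-stacks},
\[
M_a+M_b=M_a\cap M_b=M\cup(\ua{a}\cap\ua{b})=M\cup\ua{(a+b)}=M,
\]
where the last equality uses $a+b\in M$. Hence $M\preceq M_a+M_b$.

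It then suffices to check that neither $M_a$ nor $M_b$ lies in $\scrK_1\cap\scrK_2$. We show $M_a\notin\scrK_1$. First, $M_a\notin\scrK$: otherwise Lemma~\ref{lem:K-down-closed} would give $M\in\scrK$, since $M\subseteq M_a$. Second, $M_a\nsubseteq G_a$, because $a\in M_a$ while $a\notin G_a$. Hence $M_a\notin\scrK^{G_a}=\scrK_1$. Symmetrically, $M_b\notin\scrK_2$.

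So the meet condition of Theorem~\ref{th:meet-ultracontact} fails for $M$ with $G_1=M_a$, $G_2=M_b$, and therefore $M\notin\scrK_1\wedge\scrK_2$. The final clause follows at once: $\scrK_1$ and $\scrK_2$ both contain $M$, but their infimum does not.

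The only delicate step is choosing $\scrK_1$ and $\scrK_2$ so that each one individually excludes one of the two witnesses. Using the principal-ideal complements $G_a$ and $G_b$ together with Theorem~\ref{th:UC-from-grills} is what makes this exclusion transparent, since membership in $\scrK^{G}$ is just membership in $\scrK$ or containment in $G$.
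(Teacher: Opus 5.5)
Your proof is correct and is essentially the paper's argument. It uses the same grills $B\setminus\da{a}$ and $B\setminus\da{b}$ fed into Theorem~\ref{th:UC-from-grills}, the same witnesses $M\cup\ua{a}$ and $M\cup\ua{b}$ whose sum is $M$ by Lemma~\ref{lem:intersection-for-stacks}, and the meet characterization of Theorem~\ref{th:meet-ultracontact}. Your preliminary exclusion of the case $\zero\in M$ (i.e.\ $M=B$) is a genuine point the paper passes over silently: there the conclusion is impossible by (K1), and the elements $x,y$ witnessing the failure of (Gr2) that the paper's proof picks do not exist.
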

\begin{proof}
    Let $M$ be a non-empty stack of $B$ such that $M\notin\scrK$. Let $x$ and $y$ be such that (a) $x+y\in M$, but $x\notin M$ and $y\notin M$. Let $M_x\defeq M\cup\upop x$ and $M_y\defeq M\cup\upop y$. As $M\notin\scrK$, and $M_x,M_y\preceq M$, neither $M_x$ nor $M_y$ are in $\scrK$.

    Observe that since $M$ is a stack, $M=M_x\cap M_y$. Indeed, if $a\in\upop x\cap\upop y$, then $x+y\leq a$, and so $a\in M$, by~(a). By Lemma~\ref{lem:intersection-for-stacks} we obtain that (b) $M=M_x+M_y$.

    Further, $G_1\defeq B\setminus\downop x$ and $G_2\defeq B\setminus\downop y$ are grills, as $x\neq\one\neq y$, and so $\downop x$ and $\downop y$ are proper ideals. In consequence, by Theorem~\ref{th:UC-from-grills}, the families $\scrK^{G_1}$ and $\scrK^{G_2}$ are ultracontacts. Moreover, as $M$ is a stack such that $x\notin M$, $M\cap\downop x=\emptyset$. So $M\subseteq G_1$, and therefore $M\in\scrK^{G_1}$. By a~similar argument, $M\subseteq G_2$, and thus $M\in \scrK^{G_1}\cap\scrK^{G_2}$.

    Note that $M_x$ cannot be an element of $\scrK^{G_1}$, as it is neither an element of $\scrK$ nor a subset of $G_1$ (observe that $x\in M_x\setminus G_1$). Similarly, $M_y$ cannot be an element of $\scrK^{G_2}$. So, neither $M_x$ nor $M_y$ is in $\scrK^{G_1}\cap\scrK^{G_2}$. Since, in consequence of (b), $M\preceq M_x+M_y$, Theorem~\ref{th:meet-ultracontact} entails that $M\notin\scrK^{G_1}\wedge\scrK^{G_2}$.
\end{proof}

\begin{lemma}\label{lem:UC-closed-for-similar}
    For any BA $B$ and any ultracontact $\scrK$ on $B$, $\scrK$ is closed under the similarity relation, that is, if $F\sim G$, then: $F\in\scrK$ iff $G\in\scrK$.
\end{lemma}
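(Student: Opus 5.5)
The claim follows almost directly from axiom \eqref{K3}, with only the empty set needing separate attention. By symmetry of $\sim$, it suffices to show that $F\sim G$ and $F\in\scrK$ imply $G\in\scrK$.

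First we check that $G$ is non-empty. Since $F\in\scrK$, axiom \eqref{K0} gives $F\neq\emptyset$. The relation $G\preceq F$ is part of $F\sim G$, so for any $f\in F$ there is some $g\in G$ with $g\leq f$. Hence $G\neq\emptyset$.

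Next we use the other half of similarity. We have $F\preceq G$, $G\neq\emptyset$, and $F\in\scrK$, so \eqref{K3} (equivalently \eqref{K3}$'$, $\Upop F\subseteq\scrK$) yields $G\in\scrK$. The converse implication is obtained by exchanging the roles of $F$ and $G$.

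The only point needing care is the non-emptiness hypothesis in \eqref{K3}. It is supplied by the part of similarity not used in \eqref{K3}, namely $G\preceq F$, together with \eqref{K0}. One could also note that the empty set is similar only to itself, since $\emptyset\preceq H$ forces $H=\emptyset$. Beyond this check, the argument is a direct application of the axioms.
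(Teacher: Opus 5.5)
Your proposal is correct and follows the same route as the paper, whose proof is just ``By \eqref{K0} and \eqref{K3}.'' You spell out the one detail left implicit there: $G\neq\emptyset$ follows from $F\neq\emptyset$ (by \eqref{K0}) together with $G\preceq F$.
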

\begin{proof}
    By \eqref{K0} and \eqref{K3}.
\end{proof}

\begin{lemma}\label{lem:proper-ext->stack}
    If $\klam{B,\scrK}$ is a UCA and $\emptyset\neq M\notin\scrK$, then if $\scrK^M$ is an ultracontact, then $M$ is a~stack. Moreover, if $\emptyset\neq F+G\subseteq M$ but $F+G\notin\scrK$, then $F\subseteq M$ or $G\subseteq M$.
\end{lemma}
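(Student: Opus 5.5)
Both parts of the statement follow directly from the axioms applied to $\scrK^M$. The only facts needed are that $\scrK^M\setminus\scrK$ consists exactly of the non-empty subsets of $M$ that are not in $\scrK$, and that every member of $\scrK^M$ is either in $\scrK$ or a non-empty subset of $M$.

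\emph{First part: $M$ is a stack.} Take $m\in M$ and $m\le b$; we must show $b\in M$. Since $\{m\}\preceq M\cup\{b\}$ is not directly useful, we argue instead with $M$ itself. We have $M\preceq M\cup\{b\}$, because every element of $M$ is supported by itself and $b$ is supported by $m$. Since $M\in\scrK^M$ and $M\cup\{b\}\neq\emptyset$, axiom \eqref{K3} for the ultracontact $\scrK^M$ gives $M\cup\{b\}\in\scrK^M$. So either $M\cup\{b\}\in\scrK$ or $M\cup\{b\}\subseteq M$. In the first case, Lemma~\ref{lem:K-down-closed} yields $M\in\scrK$, since $M$ is a non-empty subset of $M\cup\{b\}$; this contradicts $M\notin\scrK$. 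Hence $M\cup\{b\}\subseteq M$, that is, $b\in M$. Consequently $M$ is upward closed, i.e.\ a stack.

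\emph{Second part.} Suppose $\emptyset\neq F+G\subseteq M$ and $F+G\notin\scrK$. Then $F+G\in\scrK^M$, because it is a non-empty subset of $M$. Applying \eqref{K4} to $\scrK^M$ gives $F\in\scrK^M$ or $G\in\scrK^M$; say $F\in\scrK^M$. Now $F\in\scrK$ is impossible: $F$ is non-empty (as $F+G\neq\emptyset$), $G$ is non-empty, and then Proposition~\ref{prop:K3-reversed} would give $F+G\in\scrK$, contrary to assumption. Therefore $F$ is a non-empty subset of $M$, so $F\subseteq M$. The case $G\in\scrK^M$ is symmetric and gives $G\subseteq M$.

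The only subtle point is the stack step: \eqref{K3} must be applied to $M$ and not to the singleton $\{m\}$. Applying it to $\{m\}$ would only show that $\upop m$ lies in $\scrK^M$, which already follows from $\Kmin$ and gives no information about $M$.
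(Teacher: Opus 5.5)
Your proof is correct and is essentially the paper's: the paper runs the same \eqref{K3}-in-$\scrK^M$ argument on the whole set $\upop M$ at once, where you use $M\cup\{b\}$ one $b$ at a time. It excludes $\upop M\in\scrK$ because $\upop M\sim M$ (Lemma~\ref{lem:UC-closed-for-similar}), and its second part is identical to yours via Proposition~\ref{prop:K3-reversed} and \eqref{K4}. The only blemish is your opening aside about $\{m\}\preceq M\cup\{b\}$, which does not hold in general, but nothing in your argument depends on it.
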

\begin{proof}
    Suppose $M\neq\emptyset$ and  $M\notin\scrK$. From Lemma \ref{lem:UC-closed-for-similar} and Proposition \ref{prop:support-very-basic}(9) we obtain that $\upop M\notin\scrK$. But $M$ is an element of $\scrK^M$ by the definition, and $\scrK^M$ is an ultracontact by the assumption. So, by Lemma~\ref{lem:UC-closed-for-similar}, we have that $\upop M\in\scrK^M$. Therefore $\upop M\subseteq M$, and so $M$ is a~stack.

    For the second part, let $\emptyset\neq F+G$ be a subset of $M$ which is not an element of $\scrK$. Thus, by Proposition~\ref{prop:K3-reversed} we obtain that neither $F$ nor $G$ is an element of $\scrK$. But $F+G\in\scrK^M$, so either $F\in\scrK^M$ or $G\in\scrK^M$, by \eqref{K4}. In consequence, $F\subseteq M$ or $G\subseteq M$.
\end{proof}

\begin{theorem}\label{t: K^M ultracontact iff M grill}
Let $\klam{B,\scrK}$ be an ultracontact algebra and $\emptyset\neq M\notin\scrK$. Then, $\scrK^M$ is an ultracontact iff $M$ is a grill.
\end{theorem}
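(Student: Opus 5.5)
For the direction from right to left, assume $M$ is a grill. Then Theorem~\ref{th:UC-from-grills} applied to the one-element collection $\calG\defeq\{M\}$ gives directly that $\scrK^{M}=\scrK^{\{M\}}$ is an ultracontact. No further work is needed here.

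For the direction from left to right, assume $\scrK^M$ is an ultracontact, where $\emptyset\neq M\notin\scrK$. I will verify the grill conditions \eqref{Gr0}--\eqref{Gr2} one at a time.

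\eqref{Gr1}: Lemma~\ref{lem:proper-ext->stack} already tells us that $M$ is a stack.

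\eqref{Gr0}: Since $M$ is a non-empty stack, $\one\in M$. Since $M\in\scrK^M$ by definition and $\scrK^M$ satisfies \eqref{K1}, we get $\zero\notin M$. In particular $M\neq B$, so $M$ is a proper subset of $B$.

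\eqref{Gr2}: This is the main step, and I will argue by contradiction using the decomposition from the proof of Theorem~\ref{th:M-not-grill-not-closed-for-infima}. Suppose $x+y\in M$ but $x,y\notin M$. Put $M_x\defeq M\cup\upop x$ and $M_y\defeq M\cup\upop y$. Both are stacks. Their intersection is $M$: if $a\in\upop x\cap\upop y$, then $x+y\leq a$, so $a\in M$ because $M$ is a stack. Lemma~\ref{lem:intersection-for-stacks} then gives $M=M_x+M_y$.

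Because $M\in\scrK^M$ and $\scrK^M$ satisfies \eqref{K4}, we get $M_x\in\scrK^M$ or $M_y\in\scrK^M$; say $M_x\in\scrK^M$. I will rule out both ways this could happen.
\begin{itemize}
\item $M_x\subseteq M$ is impossible, since $x\in M_x\setminus M$.
\item $M_x\in\scrK$ is impossible. Indeed, $M\subseteq M_x$ gives $M_x\preceq M$ by Proposition~\ref{prop:support-very-basic}(7), and $M\neq\emptyset$. So $M_x\in\scrK$ would yield $M\in\scrK$ by \eqref{K3}, contradicting the assumption.
\end{itemize}
Hence $M_x\notin\scrK^M$, and by the same argument $M_y\notin\scrK^M$. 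This is a contradiction, so $M$ satisfies \eqref{Gr2}.

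Together these show $M$ is a grill. I expect the only real obstacle to be \eqref{Gr2}, and it is handled by reusing the decomposition $M=M_x+M_y$ together with the downward transfer of non-membership in $\scrK$ via \eqref{K3}.
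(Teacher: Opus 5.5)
Your proof is correct. The right-to-left direction coincides with the paper's, but for left-to-right you take a more direct route.

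Both arguments rest on the same decomposition $M=M_x+M_y$, with $M_x=M\cup\upop x$ and $M_y=M\cup\upop y$. You feed it straight into \eqref{K4} for $\scrK^M$ and then use \eqref{K3} for $\scrK$ to exclude $M_x,M_y\in\scrK$. In effect you are applying the second half of Lemma~\ref{lem:proper-ext->stack}, which the paper proves but does not use here.

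The paper instead argues by contradiction through Theorem~\ref{th:M-not-grill-not-closed-for-infima}. It forms the grill extensions $\scrK^{B\setminus\downop x}$ and $\scrK^{B\setminus\downop y}$ from Theorem~\ref{th:UC-from-grills}, both of which contain $M$. It then shows, with the explicit meet formula of Theorem~\ref{th:meet-ultracontact}, that $M$ is not in their meet. Finally it contradicts the minimality of $\scrK^M$ from Lemma~\ref{lem:K^M-the-smallest}(3).

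Your version needs only \eqref{K3}, \eqref{K4} and Lemma~\ref{lem:intersection-for-stacks}. Your explicit check of \eqref{Gr0} also covers a point the paper's route passes over silently. Theorem~\ref{th:M-not-grill-not-closed-for-infima} tacitly assumes that a non-grill stack violates \eqref{Gr2}, which requires $\zero\notin M$; its hypotheses do not exclude $M=B$, and no ultracontact contains $B$. Here $\zero\notin M$ follows from \eqref{K1} for $\scrK^M$, exactly as you note.

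What the paper's detour buys is a structural explanation inside $\UC(B)$. When $M$ is a stack but not a grill, the ultracontacts extending $\scrK$ and containing $M$ are not closed under meets. So there is no smallest such ultracontact, and in particular $\scrK^M$ cannot be one.
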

\begin{proof}
    The right-to-left implication is a~special case of Theorem~\ref{th:UC-from-grills}. For the other direction, assume that $\scrK^M$ is an ultracontact but $M$ is not a~grill. Since $\scrK^M$ is a~proper extension of $\scrK$, from Lemma~\ref{lem:proper-ext->stack} we obtain that $M$ is a~stack. As $M\notin\scrK$ and $M\neq\emptyset$, by  Theorem~\ref{th:M-not-grill-not-closed-for-infima} there are ultracontacts $\scrK_1$ and $\scrK_2$ such that both contain $M$, but $M\notin\scrK_1\wedge\scrK_2$. It follows that no ultracontact $\scrK'$ such that $\scrK'\subseteq\scrK_1\cap\scrK_2$ contains~$M$. However, according to Lemma~\ref{lem:K^M-the-smallest}, $\scrK^M$ is the smallest ultracontact that contains $M$, so $\scrK^M\subseteq \scrK_1\cap\scrK_2$, which is a contradiction.
    \end{proof}

Adding $\powerne{G}$ for a grill $G$ is not the only way of obtaining UCAs that are proper extensions of an ultracontact $\scrK$. Trivially, $\powerne{G}\setminus\{\{m\}\mid m\in G\}$ gives the same extension as $\powerne{G}$. But there are more interesting cases.
\begin{example}
    Consider the algebra $\FC(\omega)$ of all finite and co-finite subsets of the set of natural numbers and its only free ultrafilter $U$, the Fr\'{e}chet filter. Let $\scrZ\defeq\{Z\subseteq U\mid\bigwedge Z=\zero\}$. Clearly, only infinite subsets of $U$ are in $\scrZ$, but not all such sets (for example the family of co-finite sets that contain all even numbers is not in $Z$). So, $\Kmin\cup\scrZ=\Kmin\cup\powerne{U}$ and thus $\Kmin\cup\scrZ$ is an ultracontact, but $\scrZ$ is not of the form $\powerne{M}$ for any set~$M$.

    In general, if $B$ is an infinite algebra, then it has a~non-principal ultrafilter~$U$. $\scrK\defeq\Kmin\cup\powerne{U}$ is then the same as
    \[
        \Kmin\cup\left\{Z\subseteq U\suchthat \bigwedge Z=\zero\right\}.
    \]
    The set $\{Z\subseteq U\mid \bigwedge Z=\zero\}$ contains only infinite subsets of $U$, and in the extreme case, it may contain all such subsets. Again, the right-hand component of the union above is never of the form $\powerne{M}$, for any $M$.

    Generalizing a bit further, for any grill $G$, $\scrK_{\min} \cup \{Z \in 2^G_+ \mid \bigwedge Z= \zero\}$ is the same as $\Kmin\cup\powerne{G}$. \QED
\end{example}

\begin{proposition}\label{prop:UC-from-grill-principal}
    If $\klam{B,\scrK}$ is a UCA and $\ca{P}$ is a family of principal ultrafilters of $B$, then
    \[
    \scrK\cup\bigcup_{U\in\ca{P}}\powerne{U}=\scrK.
    \]
    Moreover, if $\ca{X}\subseteq\Grill(B)$ and $\ca{P}$ is a subfamily of $\ca{X}$ whose elements are principal ultrafilters, then
    \[
    \scrK\cup\bigcup_{U\in\ca{X}}\powerne{U}=\scrK\cup\bigcup_{U\in \ca{X}\setminus\ca{P}}\powerne{U}.
    \]
\end{proposition}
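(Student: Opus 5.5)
The plan is to reduce everything to one observation: if $U$ is a principal ultrafilter of $B$, then $U=\upop a$ for some atom $a$ of $B$. We then show directly that $\powerne{U}\subseteq\Kmin$. Indeed, let $M$ be any non-empty subset of $U=\upop a$. Then $a\leq M$, so $M\in\Upop\{a\}$. Since $a\neq\zero$, this gives $M\in\Kmin$.

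For the first equality, we invoke Theorem~\ref{th:smallest-UC}, which gives $\Kmin\subseteq\scrK$. Hence $\powerne{U}\subseteq\scrK$ for every $U\in\ca{P}$. The union $\scrK\cup\bigcup_{U\in\ca{P}}\powerne{U}$ therefore collapses to $\scrK$; the reverse inclusion is trivial. Alternatively, one can note that $\upop a\in\scrK$ by \eqref{K2} and \eqref{K3}, since $\{a\}\preceq\upop a$. Then Lemma~\ref{lem:K^M-the-smallest}(1) gives $\scrK^{U}=\scrK$ directly. Either route works; the lower-bound route via (2) of that lemma is the cleanest.

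For the second equality, we split the index family as $\ca{X}=(\ca{X}\setminus\ca{P})\cup\ca{P}$. This gives
\[
\scrK\cup\bigcup_{U\in\ca{X}}\powerne{U}=\Bigl(\scrK\cup\bigcup_{U\in\ca{P}}\powerne{U}\Bigr)\cup\bigcup_{U\in\ca{X}\setminus\ca{P}}\powerne{U},
\]
and we apply the first part to the parenthesised term. The hypothesis that $\ca{X}$ consists of grills is not needed for the set identity itself. It only matters for the fact, via Theorem~\ref{th:UC-from-grills}, that both sides are ultracontacts.

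There is no real obstacle. The only point requiring care is justifying that a principal ultrafilter has the form $\upop a$ with $a$ an atom, which is standard: a principal filter $\upop a$ is an ultrafilter iff $a$ is an atom. We will only need $a\neq\zero$, which already follows from the ultrafilter being proper.
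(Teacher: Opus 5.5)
Your proposal is correct and follows essentially the same route as the paper. Both arguments rest on the observation that a principal ultrafilter is $\upop a$ for an atom $a$, so every non-empty subset of it has the non-zero lower bound $a$ and hence lies in $\Kmin\subseteq\scrK$; the second equality then follows by splitting $\ca{X}$ as $(\ca{X}\setminus\ca{P})\cup\ca{P}$.
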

\begin{proof}
    It is enough to observe that for every $U\in\ca{P}$, $\powerne{U}$ contains only such sets that have a~non-zero lower bound, because every such $U$ is equal to $\upop a$, for an atom $a\in B$.
\end{proof}

\begin{lemma}\label{lem:Kmin-from-atoms}
    If $\klam{B,\scrK}$ is a UCA and $A\subseteq B$ a collection of atoms, the smallest ultracontact extending $\scrK$ and containing $A$ is $\scrK\cup \Upop A$.
    In particular, the smallest UC containing $A$ is $\Kmin\cup\Upop A$.
\end{lemma}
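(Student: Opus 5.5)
The plan is to reduce the statement to Theorem~\ref{th:UC-from-grills}, using $\Upop A = \powerne{\upop A}$ from Proposition~\ref{prop:support-very-basic}(10). First I dispose of the degenerate case $A=\emptyset$. No non-empty set is supported by $\emptyset$, so $\Upop\emptyset=\emptyset$ and $\scrK\cup\Upop A=\scrK$. Since $A=\emptyset$ is not in any ultracontact by \eqref{K0}, I read the claim for $A=\emptyset$ as the trivial statement that $\scrK$ is the least extension of itself; I will note this and otherwise assume $A\neq\emptyset$.

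For $A\neq\emptyset$, observe that $\upop A=\bigcup_{a\in A}\upop a$. For an atom $a$, the set $\upop a$ is a (principal) ultrafilter. Hence, by Theorem~\ref{thm:ultragrill}, $G\defeq\upop A$ is a grill. Its properness is automatic: $\zero\notin\upop a$ because atoms are non-zero. Therefore $\scrK^{G}=\scrK\cup\powerne{G}=\scrK\cup\Upop A$, and Theorem~\ref{th:UC-from-grills} shows this is an ultracontact.

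As a sanity check, and as a fallback if one prefers not to route through grills, I would also verify the axioms for $\scrK\cup\Upop A$ directly.
\begin{itemize}
\item \eqref{K0}--\eqref{K2} are immediate; in particular $\zero$ lies above no atom.
\item \eqref{K3} follows from the transitivity of $\preceq$.
\item \eqref{K4} is the only real step. Suppose $F+H\in\Upop A$ while $F,H\notin\Upop A$, and note $F,H\neq\emptyset$. Pick $f\in F$ and $h\in H$ with no atom of $A$ below either of them. Some $a\in A$ satisfies $a\leq f+h$. Then $a=a\cdot f+a\cdot h$, so one of $a\cdot f$, $a\cdot h$ is non-zero. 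Since $a$ is an atom, this gives $a\leq f$ or $a\leq h$, a contradiction.
\end{itemize}
This atom argument is precisely the primeness of $\upop a$, and it is the only place where atomicity is used.

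Minimality: let $\scrK'\supseteq\scrK$ be an ultracontact with $A\in\scrK'$. By \eqref{K3} in its form (K3$'$), $\Upop A\subseteq\scrK'$, hence $\scrK\cup\Upop A\subseteq\scrK'$. Conversely, $A\in\Upop A$ because $A\preceq A$ and $A\neq\emptyset$. So $\scrK\cup\Upop A$ is indeed the least such ultracontact.

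The particular case follows by taking $\scrK=\Kmin$, which lies below every ultracontact by Theorem~\ref{th:smallest-UC}. The main (though mild) obstacle is checking that $\upop A$ is a grill, that is, the \eqref{K4}/primeness step. It is handled either by Theorem~\ref{thm:ultragrill} or by the direct atom computation above.
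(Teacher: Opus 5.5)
Your proposal is correct and follows the paper's proof essentially verbatim: you identify $\Upop A$ with $\powerne{\upop A}$, note that $\upop A=\bigcup_{a\in A}\upop a$ is a grill because it is a union of principal ultrafilters, invoke Theorem~\ref{th:UC-from-grills}, and obtain minimality from (K3). Your direct atom-based check of (K4) and your explicit treatment of $A=\emptyset$ (which the paper tacitly excludes, since no ultracontact contains $\emptyset$) are correct additions that make the argument slightly more careful.
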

    \begin{proof}
    Notice that that $\Upop A=\powerne{\bigcup\{\upop a\mid a\in A\}}$ and as each $a\in A$ is an atom, $\bigcup\{\upop a\mid a\in A\}$ is a~grill. Thus, Theorem~\ref{th:UC-from-grills} implies that $\scrK\cup \Upop A$ is an ultracontact.

     \smallskip

     For minimality, observe that if $\scrK'$ is a UCA extending $\scrK$ such that $A\in\scrK'$, then $\Upop A\subseteq\scrK'$, by \eqref{K3}. So, $\scrK^{A}\subseteq\scrK'$.
     \end{proof}

Note that the method of extending $\scrK$ from Lemma~\ref{lem:Kmin-from-atoms}
 may fail if $A$ is not a~set of atoms. Below is an example showing this.

\begin{example}\label{ex:non-atoms-failure}
Consider a Boolean algebra $B$ and a set $\{a,b\}\subseteq B$ such that both its elements are non-zero, $a$~is not an atom, and $a\cdot b=\zero$. So there are $a_0$ and $a_1$, both non-zero, such that $a_0\cdot a_1=\zero$ and $a_0+a_1=a$. We have that
    \[
\{a_0,b\}+\{a_1,b\}=\{a_0+a_1,a_0+b,b+a_1,b\}=\{a,a_0+b,b+a_1,b\}.
    \]
    As $\{a,b\}\preceq \{a,a_0+b,b+a_1,b\}$, it follows that
    \[
        \{a,a_0+b,b+a_1,b\}\in\Kmin\cup\Upop\{a,b\}.
    \]
    Yet neither $\{a_0,b\}$ nor $\{a_1,b\}$ can be in $\Kmin\cup\Upop\{a,b\}$, so the family fails to satisfy \eqref{K4} and so in not a~UC.\QED
\end{example}

\begin{theorem}\label{th:meet-of-UC-is-not-intersection}
    In general, the set of UCs on a Boolean algebra $B$ is not closed under intersections.
\end{theorem}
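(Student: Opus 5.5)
Natural plan: use Theorem~\ref{th:M-not-grill-not-closed-for-infima} directly. It produces, for a non-empty stack $M\notin\scrK$ that is not a grill, two ultracontacts $\scrK_1=\scrK^{G_1}$ and $\scrK_2=\scrK^{G_2}$ with $M\in\scrK_1\cap\scrK_2$ but $M\notin\scrK_1\wedge\scrK_2$. Since $\scrK_1\wedge\scrK_2\subseteq\scrK_1\cap\scrK_2$, this gives $\scrK_1\wedge\scrK_2\neq\scrK_1\cap\scrK_2$. If $\scrK_1\cap\scrK_2$ were an ultracontact, it would be the largest ultracontact contained in both, hence equal to the meet. So $\scrK_1\cap\scrK_2$ is not an ultracontact. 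It therefore remains to exhibit a concrete $B$, $\scrK$ and $M$ meeting the hypotheses.

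The simplest choice is $\scrK=\Kmin$ on the four-element algebra $B=\{\zero,a,-a,\one\}$. Take $M\defeq\{\one\}\cup\{a\}\cup\{-a\}=B\setminus\{\zero\}$. This is a non-empty stack and it is a grill (being the union of the two ultrafilters), so it does not satisfy the hypotheses. We need a non-grill stack, i.e.\ one with $x+y\in M$ but $x,y\notin M$, and also $M\notin\Kmin$, i.e.\ $M$ has no non-zero lower bound. In the four-element algebra, every stack without $\zero$ that has no non-zero lower bound contains both $a$ and $-a$, and is then a grill. So we should pass to a larger algebra, say the eight-element algebra with atoms $p,q,r$.

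In that algebra take $M\defeq\upop(p+q)\cup\upop(q+r)\cup\upop(p+r)$, the set of elements containing at least two atoms. This is a stack. Its meet is $\zero$, since $(p+q)\cdot(q+r)\cdot(p+r)=\zero$, so $M\notin\Kmin$. It is not a grill, because $p+q\in M$ while $p,q\notin M$. Theorem~\ref{th:M-not-grill-not-closed-for-infima} with $\scrK=\Kmin$, $x=p$, $y=q$ then gives $\scrK_1=\Kmin^{B\setminus\downop p}$ and $\scrK_2=\Kmin^{B\setminus\downop q}$, whose intersection is not an ultracontact. Equivalently, one can check directly that $M=M_p+M_q$ lies in $\scrK_1\cap\scrK_2$ while neither $M_p=M\cup\upop p$ nor $M_q=M\cup\upop q$ does. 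That is a violation of \eqref{K4}.

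The main obstacle is just choosing the example correctly, so that $M\notin\Kmin$ and $M$ is a stack but not a grill. The four-element algebra fails for this purpose, which is why the three-atom algebra is needed. The remaining checks are routine verifications already carried out in the proof of Theorem~\ref{th:M-not-grill-not-closed-for-infima}.
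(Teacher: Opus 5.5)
Your proof is correct, but it reaches the counterexample differently from the paper.

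The paper argues directly in the sixteen-element algebra with atoms $a,b,c,d$. It notes three things:
\begin{itemize}
\item $\Kmin\cup\Upop\{a,b\}$ and $\Kmin\cup\Upop\{c,d\}$ are ultracontacts by Lemma~\ref{lem:Kmin-from-atoms};
\item $\{a,b\}+\{c,d\}$ lies in both, because each summand supports it (Proposition~\ref{prop:support-very-basic}(1));
\item neither summand lies in the intersection.
\end{itemize}
So \eqref{K4} fails.

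You instead specialize Theorem~\ref{th:M-not-grill-not-closed-for-infima} to $\scrK=\Kmin$ and the non-grill stack $M=\{p+q,q+r,p+r,\one\}$ of the eight-element algebra. Your two ultracontacts are in fact of the paper's kind. Since $B\setminus\downop p=\upop q\cup\upop r$, you get $\scrK_1=\Kmin\cup\Upop\{q,r\}$, and likewise $\scrK_2=\Kmin\cup\Upop\{p,r\}$; after renaming, these are the $\scrK_{b,c}$ and $\scrK_{a,c}$ of Example~\ref{ex:diff-UCs-same-C}. What differs is the witness: you use the stacks $M_p,M_q$ with $M_p+M_q=M_p\cap M_q=M$, rather than the atom sets themselves.

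\textbf{What each route buys.}
\begin{itemize}
\item Your route explains the phenomenon. Every stack $M\notin\scrK$ with a pair $x+y\in M$, $x,y\notin M$, lies in two grill extensions of $\scrK$ whose intersection is not an ultracontact.
\item The paper's route is more self-contained and never uses the meet formula of Theorem~\ref{th:meet-ultracontact}.
\item Your closing direct check of \eqref{K4} is just as elementary as the paper's argument, so the detour through $\scrK_1\wedge\scrK_2$ is dispensable.
\end{itemize}

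The smaller algebra is not a gain of your method as such. The paper's witness also works with three atoms: $\{q,r\}+\{p,r\}$ lies in both $\Kmin\cup\Upop\{q,r\}$ and $\Kmin\cup\Upop\{p,r\}$, while $\{q,r\}$ is not in the second and $\{p,r\}$ is not in the first.

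You were also right to choose $M$ with an explicit failure of \eqref{Gr2}. That pair is exactly what the proof of Theorem~\ref{th:M-not-grill-not-closed-for-infima} uses, and the stack $B$, though not a grill, has no such pair.
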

\begin{proof}[Proof by example]
Consider the sixteen-element algebra $B$ whose atoms are $a$, $b$, $c$, and $d$. By Lemma~\ref{lem:Kmin-from-atoms}, the following two families on $B$
    \[
    \scrK_{\{a,b\}}\defeq\Kmin\cup\Upop\{a,b\}\qtand \scrK_{\{c,d\}}\defeq\Kmin\cup\Upop\{c,d\}
    \]
    are the smallest UCAs containing $\{a,b\}$ and $\{c,d\}$ respectively.
    Observe that
    \[
    \{a,b\}\notin\scrK_{\{c,d\}}\qtand \{c,d\}\notin\scrK_{\{a,b\}}.
    \]
We have that
    \[
\{a,b\}+\{c,d\}=\{a+c,a+d,b+c,b+d\}.
  \]
Since by Proposition~\ref{prop:support-very-basic}(1) both $\{a,b\}$ and $\{c,d\}$ support $\{a,b\}+\{c,d\}$ it must be the case that
\[
\{a,b\}+\{c,d\}\in\scrK_{\{a,b\}}\cap\scrK_{\{c,d\}},
\]
but by the observation above, none of the two summands is in the intersection. It follows that \eqref{K4} does not hold for $\scrK_{\{a,b\}}\cap\scrK_{\{c,d\}}$, which in consequence is not a~UC.
\end{proof}

\section{Contact relations from UCs}\label{sec:C-from-UC}

A \emph{contact relation} $\con$ on a Boolean algebra $\klam{B,\cdot,+,\zero,\one}$ is a binary relation $\mathord{\con}\subseteq B\times B$ that satisfies the following axioms ($\notcon$ is the set-theoretical complement of $\con$):
\begin{gather}
    (\forall x\in B)\,\zero\notcon x,\tag{C0}\label{C0}\\
    (\forall x\in B)\,(x\neq\zero\rightarrow x\con x),\tag{C1}\label{C1}\\
    (\forall x,y\in B)\,(x\con y\rightarrow y\con x),\tag{C2}\label{C2}\\
    (\forall x,y,z\in B)\,(x\con y\wedge y\leq z\rightarrow x\con z),\tag{C3}\label{C3}\\
    (\forall x,y,z\in B)\,(x\con y+z\rightarrow x\con y\vee x\con z).\tag{C4}\label{C4}
\end{gather}
A \emph{Boolean contact algebra} is a pair $\klam{B,\con}$ such that $B$ is a BA, and $\con$ is a~contact relation. In the case $x\con y$ we say that $x$ \emph{is in contact with} $y$. Since the axioms \eqref{C0}--\eqref{C4} are universal, each restriction of $\con$ to a subalgebra of $B$ is also a contact relation.

Each UCA $\klam{B,\scrK}$ gives rise to the contact relation determined by $\scrK$ in a natural way, by taking the two-element subsets of $\scrK$
\begin{equation}\tag{$\dftt{\Kon}$}\label{df:C_K}
    x\Kon y\iffdef \{x,y\}\in\scrK.
\end{equation}

\begin{theorem}\label{thm:UCtoC}
    If $B$ is a Boolean algebra and $\scrK$ is an ultracontact, then $\Kon$ is a contact relation.
\end{theorem}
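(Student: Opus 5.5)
The plan is to verify the axioms \eqref{C0}--\eqref{C4} one at a time for $\Kon$, each time translating the statement about pairs into a statement about the two-element (or one-element) families $\{x,y\}$ and invoking the corresponding axiom (K0)--(K4).

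\eqref{C0}: for any $x$, the family $\{\zero,x\}$ contains $\zero$, so \eqref{K1} gives $\{\zero,x\}\notin\scrK$, i.e.\ $\zero\notcon_{\scrK} x$. \eqref{C1}: if $x\neq\zero$ then $\{x,x\}=\{x\}\in\scrK$ by \eqref{K2}. \eqref{C2}: since $\{x,y\}=\{y,x\}$ as sets, symmetry is automatic. \eqref{C3}: if $\{x,y\}\in\scrK$ and $y\leq z$, then $\{x,y\}\preceq\{x,z\}$, because $x\leq x$ and $y\leq z$; as $\{x,z\}\neq\emptyset$, \eqref{K3} yields $\{x,z\}\in\scrK$.

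The only step needing care is \eqref{C4}. Suppose $\{x,y+z\}\in\scrK$. I would use the identity
\[
\{x,y\}+\{x,z\}=\{x,\;x+z,\;x+y,\;y+z\}.
\]
The family $\{x,y+z\}$ supports the right-hand side: $x$ lies below $x$, $x+z$ and $x+y$, and $y+z$ lies below itself. Hence, by \eqref{K3}, $\{x,y\}+\{x,z\}\in\scrK$. Then \eqref{K4} gives $\{x,y\}\in\scrK$ or $\{x,z\}\in\scrK$, that is, $x\Kon y$ or $x\Kon z$.

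The main, though modest, obstacle is spotting the right decomposition for \eqref{C4}. One has to realise that the Minkowski sum $\{x,y\}+\{x,z\}$ is supported by $\{x,y+z\}$, rather than trying to show that it equals $\{x,y+z\}$. Degenerate cases, such as $x=y$, only shrink the sets involved and do not affect the support argument.
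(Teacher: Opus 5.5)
Your proof is correct and takes essentially the same approach as the paper. Each of (C0)--(C3) follows directly from (K1), (K2), the fact that $\{x,y\}=\{y,x\}$, and (K3). For (C4) you do exactly what the paper does: observe that $\{x,y+z\}\preceq\{x,y\}+\{x,z\}=\{x,x+y,x+z,y+z\}$, then apply (K3) followed by (K4).
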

\begin{proof}
We check all the axioms for contact.

\eqref{C0} This holds as any pair of the form $\{\zero,x\}$  cannot be in $\scrK$ by \eqref{K1}.

\smallskip

\eqref{C1} By \eqref{K2}, if $x\neq\zero$, then $\{x\}\in \scrK$, so $x\Kon x$.

\smallskip

\eqref{C2} Immediate.

\smallskip

\eqref{C3} Suppose that $x\Kon y$ and $y\leq z$. Clearly, $\{x,y\}\preceq \{x,z\}$, and so $\{x,z\}\in\scrK$, as desired.

\smallskip

\eqref{C4} Suppose that $\{x,x_1+ x_2\}\in \scrK$. We have
\[
\{x,x_1+ x_2\}\preceq \{x,x+ x_1,x+ x_2,x_1+ x_2\}.
\]
Then, by \eqref{K3}, $\{x,x+ x_1,x+ x_2,x_1+ x_2\}\in \scrK$. But
\[
\{x,x+ x_1,x+ x_2,x_1+ x_2\}=\{x,x_1\}+ \{x,x_2\},
\]
so, by \eqref{K4}, either $\{x,x_1\}\in \scrK$ or $\{x,x_2\}\in \scrK$.
\end{proof}

Theorem~\ref{th:smallest-UC} can be interpreted as the counterpart of the fact that the overlap relation is the smallest contact relation, i.e., the smallest relation that satisfies axioms \eqref{C0}--\eqref{C4}. Moreover, the contact relation induced by $\Kmin$ in the sense of \eqref{df:C_K} is precisely the overlap relation of the algebra.

\begin{theorem}\label{th:C_S}
For the smallest UC on B we have
\[
x\con_{\Kmin}y\iffslim x\cdot y\neq\zero,
\]
and for the largest one, it is the case that
\[
x\con_{\Kmax}y\iffslim x\neq\zero\neq y.
\]
\end{theorem}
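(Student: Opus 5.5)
The plan is to unfold the definition \eqref{df:C_K} in each case and read off the membership condition for the two-element (or one-element, when $x=y$) set $\{x,y\}$ directly from the explicit descriptions of $\Kmin$ and $\Kmax$.

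For $\Kmin$: by definition, $x\con_{\Kmin}y$ holds iff $\{x,y\}\in\Kmin$. That in turn holds iff there is $z\in\Bne$ with $z\leq\{x,y\}$, that is, $z\leq x$ and $z\leq y$.
\begin{itemize}
\item[$(\Rightarrow)$] If such a $z$ exists, then $\zero\neq z\leq x\cdot y$, so $x\cdot y\neq\zero$.
\item[$(\Leftarrow)$] If $x\cdot y\neq\zero$, take $z\defeq x\cdot y$. It is a non-zero lower bound of $\{x,y\}$, so $\{x,y\}\in\Kmin$.
\end{itemize}
The case $x=y$ needs no separate treatment. Here $\{x,y\}=\{x\}$ and $x\cdot y=x$, and the same argument applies verbatim.

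For $\Kmax$: by definition, $\{x,y\}\in\Kmax$ iff $\{x,y\}$ is non-empty, which it always is, and $\zero\notin\{x,y\}$. The latter is exactly the condition $x\neq\zero\neq y$.

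There is no real obstacle here. The only point needing a moment's care is that the lower-bound formulation of $\Kmin$ translates correctly into the meet condition, and this uses only the fact that $x\cdot y$ is the greatest lower bound of $x$ and $y$ in $B$.
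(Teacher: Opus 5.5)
Your proof is correct and matches the paper's own argument: both unfold the definition of $\Kon$ and read off membership of $\{x,y\}$ in $\Kmin$ (a non-zero lower bound exists iff $x\cdot y\neq\zero$) and in $\Kmax$ (iff $\zero\notin\{x,y\}$). Your explicit remarks on the case $x=y$ and on why a lower bound yields the meet condition only spell out details the paper leaves implicit.
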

\begin{proof}
    For $\Kmin$ we have: $x\con_{\Kmin}y$ if and only if $\{x,y\}$ has a non-zero lower bound if and only if the meet of $x$ and $y$ is different from $\zero$.

    For $\Kmax$:  $x\con_{\Kmax}y$ if and only if $\{x,y\}\in\Kmax$ if and only if $\zero\notin\{x,y\}$.
\end{proof}

\begin{example}\label{ex:diff-UCs-same-C}
    Let us show that there are different ultracontacts that induce the same contact relation. To this end, we proceed with the 8-element algebra $B$ with atoms $\{a,b,c\}$. We start with an observation that the minimal contact of $B$ is
\[
\Kmin=\powerne{B}\setminus\{\{a,b\},\{a,c\},\{b,c\},\{a,b,c\},\{a,b+c\},\{b,a+c\},\{c,a+b\}\}.
\]
The algebra has three ultrafilters: $\upop a$, $\upop b$, and $\upop c$. Since any union of ultrafilters is a grill, it follows that
\[
\Grill(B)=\{\upop a,\upop b,\upop c,\upop a\cup\upop b,\upop a\cup\upop c,\upop b\cup \upop c,\upop a\cup\upop b\cup\upop c\}.
\]
We have $2^7-1$ non-empty subsets of the set of grills, but by Proposition~\ref{prop:UC-from-grill-principal} not all of them give rise to ultracontacts that differ from $\Kmin$. By the same corollary, the two sets below (and others that contain ultrafilters of $B$)
\[
\{\upop a\cup \upop b\}\qtand \{\upop a,\upop a\cup \upop b\}
\]
will result in the same $\scrK$. It is easy to see that the following are the minimal UCs on $B$ extending $\Kmin$:
\begin{align*}
\scrK_{a,b}\defeq{}&\Kmin\cup \powerne{\upop a\cup\upop b},\\
\scrK_{a,c}\defeq{}&\Kmin\cup \powerne{\upop a\cup\upop c},\\
\scrK_{b,c}\defeq{}&\Kmin\cup \powerne{\upop b\cup\upop c}.
\end{align*}
In the case of $\scrK_{a,b}$, since $\{a,b\}\subseteq\upop a\cup\upop b$, we have that $a\con_{\scrK_{a,b}}b$. The other non-overlapping pairs of objects of $\con_{\scrK_{a,b}}$ are
\[
\klam{a,b+c}\qtand\klam{a+c,b}.
\]
Similar observations can be made about $\scrK_{a,c}$ and $\scrK_{b,c}$ and their corresponding contact relations. We can see that these are three minimal UCs on $B$ extending~$\Kmin$.

Different UCs on $B$ that generate the same contact relation are
\begin{align*}
    \scrK\defeq{}&\Kmin\cup \powerne{\upop a\cup\upop b}\cup\powerne{\upop a\cup\upop c}\cup\powerne{\upop b\cup\upop c}\qtand\\
    \scrK_{a,b,c}\defeq{}&\Kmin\cup\powerne{\upop a\cup\upop b\cup\upop c}.
\end{align*}
Both generate the full contact relation on $B$, however $\{a,b,c\}\in\scrK_{a,b,c}\setminus\scrK$. The difference is not purely formal, and we can meaningfully interpret it: In $\scrK$ all  pairs of non-zero regions are in contact (figuratively speaking: <<share a point>>); in $\scrK_{a,b,c}$ all triples of regions <<share a point>>, in particular there is a~<<point>> of contact of $a$, $b$, and $c$ (which is not the case for $\scrK$).\QED
\end{example}

Let us encapsulate the above in the following theorem.

\begin{theorem}\label{th:UCs-with-the-same-contact}
There exists a Boolean algebra $B$ and different UCs $\scrK_1$ and $\scrK_2$ on $B$ for which $\con_{\scrK_1}=\con_{\scrK_2}$.
\end{theorem}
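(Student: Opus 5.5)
We take the witness from Example~\ref{ex:diff-UCs-same-C}: $B$ is the eight-element algebra with atoms $a,b,c$, and
\[
\scrK_1\defeq\Kmin\cup\powerne{\upop a\cup\upop b}\cup\powerne{\upop a\cup\upop c}\cup\powerne{\upop b\cup\upop c},\qquad
\scrK_2\defeq\Kmin\cup\powerne{\upop a\cup\upop b\cup\upop c}.
\]
The proof has three steps: show both families are ultracontacts, show they induce the same contact, and show they are different.

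For the first step, every union of ultrafilters is a grill by Theorem~\ref{thm:ultragrill}. So $\upop a\cup\upop b$, $\upop a\cup\upop c$, $\upop b\cup\upop c$ and $\upop a\cup\upop b\cup\upop c$ are grills. Theorem~\ref{th:UC-from-grills}, applied to $\Kmin$ (an ultracontact by Theorem~\ref{th:smallest-UC}), then shows that $\scrK_1$ and $\scrK_2$ are ultracontacts.

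For the second step, we show that both induced contacts equal $\con_{\Kmax}$, i.e. $x\con y$ iff $x\neq\zero\neq y$ (Theorem~\ref{th:C_S}). Since $\scrK_i\subseteq\Kmax$ by Theorem~\ref{th:largest-UC}, only the inclusion $\con_{\Kmax}\subseteq\con_{\scrK_i}$ needs checking. Take nonzero $x,y$.
\begin{enumerate}[label=(\roman*)]
\item If $x\cdot y\neq\zero$, then $\{x,y\}\in\Kmin$.
\item Otherwise choose atoms $p\leq x$ and $q\leq y$, which exist since $B$ is finite; then $p\neq q$. Every nonzero element lies above some atom, so $x\in\upop p$ and $y\in\upop q$.
\end{enumerate}
In case (ii), $\{x,y\}\subseteq\upop p\cup\upop q$. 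Since $\{p,q\}$ is a two-element subset of $\{a,b,c\}$, this union is one of the three two-atom grills used in $\scrK_1$, so $\{x,y\}\in\scrK_1$. It is also contained in $\upop a\cup\upop b\cup\upop c$, so $\{x,y\}\in\scrK_2$. Hence $\con_{\scrK_1}=\con_{\Kmax}=\con_{\scrK_2}$.

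For the third step, we use $\{a,b,c\}$. It lies in $\upop a\cup\upop b\cup\upop c$, so it belongs to $\scrK_2$. To see it is not in $\scrK_1$, we rule out each component:
\begin{enumerate}[label=(\roman*)]
\item It is not in $\Kmin$, because the only lower bound of $\{a,b,c\}$ is $a\cdot b\cdot c=\zero$.
\item It is not a subset of $\upop a\cup\upop b$, because $c$ is not above $a$ or $b$ (as $c$ is an atom different from both).
\item The same argument excludes $\upop a\cup\upop c$ and $\upop b\cup\upop c$.
\end{enumerate}
Thus $\{a,b,c\}\in\scrK_2\setminus\scrK_1$, so $\scrK_1\neq\scrK_2$. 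The only step needing care is the second one, and it is settled by the atom argument above.
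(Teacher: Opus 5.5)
Your proof is correct and follows the paper's own argument. The paper uses exactly this witness in Example~\ref{ex:diff-UCs-same-C} (your $\scrK_1$ and $\scrK_2$ are its $\scrK$ and $\scrK_{a,b,c}$), notes that both induce the full contact, and separates them by $\{a,b,c\}$; you additionally spell out the checks it leaves implicit, via Theorem~\ref{th:UC-from-grills} and the observation that every pair of non-zero elements lies in $\Kmin$ or in a two-atom grill.
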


\begin{example}
    It is a consequence of the construction from Example~\ref{ex:diff-UCs-same-C}, that for $x\notin\{\zero,\one\}$ there may exist incomparable ultracontacts containing $\{x,-x\}$. For the same algebra as in the example and for  $x\defeq a$ and $-x=b+c$, define
    \[
        \scrK_1\defeq\Kmin\cup\powerne{\upop a\cup\upop b}\qtand \scrK_2\defeq\Kmin\cup\powerne{\upop a\cup\upop c}.
    \]
    Clearly, $\{a,b+c\}\in\scrK_1\cap\scrK_2$, but $\{a,b\}\in\scrK_1\setminus\scrK_2$ and $\{a,c\}\in\scrK_2\setminus\scrK_1$.\QED
\end{example}

Let us make one more observation that emphasizes the specific nature of the approach to nearness of regions presented in this paper. Given a BCA $\klam{B,\con}$ by a \emph{clique} we understand a non-empty set $M\subseteq B$ such that $M\times M\subseteq\con$. Let $\scrM$ be the family of all such cliques. Then, $\scrM$ satisfies \eqref{K0} by definition, \eqref{K1} by \eqref{C0}, \eqref{K2} by \eqref{C1}, and \eqref{K3} by \eqref{C3}. So, up to \eqref{K3}, the axioms for UCAs work well when the elements of $\scrK$ are cliques. However, the connection breaks at \eqref{K4}, which is shown by the example that follows. Going back to our motivations for ultracontact, this may be seen as an abstraction from the fact that a family of subsets $\ca{F}\subseteq\ca{P}(X)$ may be such that $F_1\cap F_2\neq \emptyset$ for each $F_1,F_2\in \ca{F}$ but $\bigcap \ca{F}=\emptyset$. The bottom line is that ultracontact does not axiomatize cliques in an abstract setting, but a~different phenomenon, that may be compared to a situation in which we have a set of ``spokes'' that all meet in a common locus (rather than a set of spokes each of which connects a pair of loci in a space).

\begin{example}\label{ex:abcd} Fix a BCA $\klam{B,\con}$ and let $a,b,c,d$ be any four pairwise different elements of $B$ such that
\[
a\con c\qtand a\con d
\]
but $a\notcon b$ and $c\notcon d$. Observe that
\[
\{a,b\}+\{c,d\} = \{a+c,a+d,b+c,b+d\}
\]
is a clique. Indeed, since $a\con c$, $a\leq a+d$ and $c\leq b+c$, by \eqref{C3} and the symmetry of the contact we have that $a+d\con b+c$. In the analogous way, we obtain from $a\con d$ that $a+c\con b+d$. All the remaining cases concern pairs that are in the overlap relation, and hence in $\con$. It follows that $\{a,b\}+\{c,d\}$ is a clique, but neither $\{a,b\}$ nor $\{c,d\}$ is. This shows that \eqref{K4} fails for cliques.\QED
\end{example}

\subsection{Hypercontact in the framework of UCA}

The motivation somewhat similar to ours is a driving factor behind the work of Lipparini \cite{Lipparini-HS}, where the author generalizes contact relations to finite collections of elements sharing <<a point>> of contact. Lipparini introduces the notion of a hypercontact lattice, but we limit ourselves to considering his concept in the setting of Boolean algebras.
\begin{definition}
    A pair $\klam{B,\Delta}$ is a \emph{hypercontact} algebra if $B$ is a Boolean algebra, and $\Delta$ is a collection of finite subsets of $B$ such that
    \begin{enumerate}[label=(H\arabic*),ref=H\arabic*,itemsep=0pt]
        \item\label{it:HC1} If $\zero\in F$, then $F\notin\Delta$.
        \item\label{it:HC2} If $x\neq\zero$, then $\{x\}\in\Delta$.
        \item\label{it:HC3} If $F\in\Delta$ and $G\subseteq F$, then $G\in\Delta$.
        \item\label{it:HC4} If $F\in\Delta$ and there is an $x\in F$ such $x\leq y$, then $F\cup\{y\}\in\Delta$.
        \item\label{it:HC5} If $F\cup\{x+y\}\in\Delta$, then $F\cup\{x\}\in\Delta$ or $F\cup\{y\}\in\Delta$.
    \end{enumerate}\QED
\end{definition}

\begin{proposition}
    Let $\klam{B,\scrK}$ be an ultracontact algebra. For
    \[
  \Delta_{\scrK}\defeq\{F\in \scrK\mid F\text{ is finite}\}\cup \{\emptyset\},
    \]
    the pair $\klam{B,\Delta_{\scrK}}$ is a hypercontact algebra.
\end{proposition}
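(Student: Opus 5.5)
We verify the axioms \eqref{it:HC1}--\eqref{it:HC5} one by one for $\Delta_{\scrK}$, using only \eqref{K0}--\eqref{K4} and the facts already established: Lemma~\ref{lem:K-down-closed} (closure of $\scrK$ under non-empty subsets) and Proposition~\ref{prop:support-very-basic}. Every member of $\Delta_{\scrK}$ is finite by definition, so the work is the closure conditions. The empty set is added by hand because \eqref{it:HC3} forces $\emptyset\in\Delta$ as soon as $\Delta$ is non-empty, while \eqref{K0} excludes it from $\scrK$. This is the only point where the two frameworks differ.

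\eqref{it:HC1}: if $\zero\in F$, then $F\neq\emptyset$, and $F\notin\scrK$ by \eqref{K1}, so $F\notin\Delta_{\scrK}$. \eqref{it:HC2} is immediate from \eqref{K2}, since $\{x\}$ is finite. \eqref{it:HC3}: let $F\in\Delta_{\scrK}$ and $G\subseteq F$. If $G=\emptyset$ it was added explicitly. Otherwise $F\neq\emptyset$, so $F\in\scrK$, and Lemma~\ref{lem:K-down-closed} gives $G\in\scrK$; $G$ is finite as a subset of a finite set. \eqref{it:HC4}: if $F\in\Delta_{\scrK}$ contains some $x\leq y$, then $F\neq\emptyset$ and $F\in\scrK$. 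Moreover $F\preceq F\cup\{y\}$, since each element of $F$ supports itself and $x$ supports $y$. Hence $F\cup\{y\}\in\scrK$ by \eqref{K3}, and it is still finite.

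The main step is \eqref{it:HC5}, where \eqref{K4} must be applied through a suitable sum decomposition. Suppose $F\cup\{x+y\}\in\Delta_{\scrK}$; this set is non-empty, so it lies in $\scrK$. The approach mirrors the proof of \eqref{C4} in Theorem~\ref{thm:UCtoC}. Set $F_1\defeq F\cup\{x\}$ and $F_2\defeq F\cup\{y\}$. The sum $F_1+F_2$ consists of three kinds of elements: $f+f'$ with $f,f'\in F$, $f+y$, $x+f'$, and $x+y$. Each of these lies above some element of $F\cup\{x+y\}$: the mixed ones lie above $f$ or $f'$, and $x+y$ lies above itself. So $F\cup\{x+y\}\preceq F_1+F_2$, which is non-empty, and \eqref{K3} gives $F_1+F_2\in\scrK$. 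Then \eqref{K4} yields $F_1\in\scrK$ or $F_2\in\scrK$. Both are finite, so the corresponding set belongs to $\Delta_{\scrK}$.

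The case $F=\emptyset$ needs no separate treatment: then $F_1+F_2=\{x+y\}$, and the argument still goes through, giving $\{x\}\in\scrK$ or $\{y\}\in\scrK$. I expect no real obstacle beyond checking the support relation $F\cup\{x+y\}\preceq F_1+F_2$ carefully.
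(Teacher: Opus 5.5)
Your proof is correct and follows the paper's argument essentially step by step: \eqref{it:HC3} by closure of $\scrK$ under non-empty subsets (the paper invokes $F\preceq G$ and \eqref{K3} directly, which amounts to the same thing), \eqref{it:HC4} by $F\preceq F\cup\{y\}$ and \eqref{K3}, and \eqref{it:HC5} by $F\cup\{x+y\}\preceq(F\cup\{x\})+(F\cup\{y\})$ followed by \eqref{K3} and \eqref{K4}. One trivial slip: you announce ``three kinds'' of elements of $F_1+F_2$ but list four; this is harmless, since $f+f'$ lies above $f\in F$.
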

\begin{proof}
  Axioms \eqref{it:HC1} and \eqref{it:HC2} are obvious.

  \smallskip

  For \eqref{it:HC3}, suppose that $F\in \Delta_{\scrK}$, and that $G\subseteq F$. If $F$ is empty, then $G$ is empty, and so $G\in \Delta_{\scrK}$. If $F\neq \emptyset$, then for any non-empty $G\subseteq F$, we have that $F\preceq G$ and so $G\in\scrK$ by \eqref{K3}. Since $G$ is finite, $G\in\Delta_\scrK$.

  \smallskip

  For \eqref{it:HC4}, suppose that $F\in\Delta_{\scrK}$ and there is an $x\in F$ such $x\leq y$. Then, $F\preceq F\cup\{y\}$. Thus, $F\cup\{y\}\in\scrK$ by \eqref{K3}, and as $F\cup\{y\}$ is finite $F\cup\{y\}\in\Delta_{\scrK}$.

  \smallskip

  For \eqref{it:HC5}, assume that $F\cup\{x+y\}\in\Delta_{\scrK}$. Observe that $F\cup\{x+y\}\preceq (F\cup\{x\})+(F\cup\{y\})$. Then, by \eqref{K3}, the right-hand side is in $\scrK$. By \eqref{K4}, $F\cup\{x\}\in\scrK$ or $F\cup\{y\}\in\scrK$. As both sets are finite, one of them is in $\Delta_{\scrK}$.
\end{proof}

To conclude, the results from this section imply that the approach put forward by us generalizes (\emph{mutatis mutandis}) various approaches to the nearness of regions known in the literature.

\section{Finite UCAs and abstract simplicial complexes}\label{sec:fin-UCs-and-SCs}

We call an ultracontact algebra $\klam{B,\scrK}$ \emph{finite} when $B$ is finite. Below, we study the properties of such algebras, thanks to which we may put our work in a larger perspective by revealing a~connection between ultracontacts and abstract simplicial complexes.

\begin{lemma}\label{lem: finite ultracontact and atoms}
Let $\klam{B,\scrK}$ be a finite ultracontact algebra and $F \subseteq B$ be non-empty. Then $F \in \scrK$ if and only if there exists $G \subseteq \Atom(B)$ such that $G \preceq F$ and $G \in \scrK$.
\end{lemma}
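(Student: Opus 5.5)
The backward direction is immediate from the axioms. If $G\subseteq\Atom(B)$, $G\in\scrK$ and $G\preceq F$ with $F\neq\emptyset$, then \eqref{K3} yields $F\in\scrK$.

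For the forward direction, let $F\in\scrK$. Since $B$ is finite, so is $F$; write $F=\{f_1,\ldots,f_k\}$. By \eqref{K1}, $\zero\notin F$, so each $f_j$ is a finite join of atoms, $f_j=a_{j,1}+\ldots+a_{j,n_j}$ with all $a_{j,l}\in\Atom(B)$. The idea is to apply \eqref{K4} to a decomposition of $F$ as a sum of sets of atoms. For every choice function $\sigma$ picking one atom $\sigma(j)\in\{a_{j,1},\ldots,a_{j,n_j}\}$ below each $f_j$, put
\[
G_\sigma\defeq\{\sigma(1),\ldots,\sigma(k)\}\subseteq\Atom(B).
\]
There are finitely many such $\sigma$, say $\sigma_1,\ldots,\sigma_N$. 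I claim that
\[
F\preceq G_{\sigma_1}+\ldots+G_{\sigma_N}.
\]
Once this holds, \eqref{K3} gives $G_{\sigma_1}+\ldots+G_{\sigma_N}\in\scrK$, since the right-hand side is non-empty. Then \eqref{K4-general} gives some $G_{\sigma_i}\in\scrK$. Moreover $G_{\sigma_i}\preceq F$, because each $f_j$ lies above $\sigma_i(j)\in G_{\sigma_i}$. This finishes the proof.

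The main step is the displayed support claim, which is a combinatorial distributivity argument. An element of the sum has the form $x=g_1+\ldots+g_N$ with $g_i\in G_{\sigma_i}$, so $g_i=\sigma_i(j_i)$ for some index $j_i$. We need some $f_j\leq x$. Suppose not. Then for every $j$ there is an atom $a_{j,l_j}\leq f_j$ with $a_{j,l_j}\nleq x$. Let $\tau$ be the choice function $\tau(j)=a_{j,l_j}$, and let $i$ be its index, so $\tau=\sigma_i$. Then $g_i=\sigma_i(j_i)=a_{j_i,l_{j_i}}\leq x$, which contradicts $a_{j_i,l_{j_i}}\nleq x$.

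So the anticipated obstacle is only bookkeeping. The choice-function trick is the diagonal step. Finiteness of $B$ is used twice: so that $F$ and the atom decompositions are finite, and so that only finitely many summands occur, which is needed to invoke \eqref{K4-general}.
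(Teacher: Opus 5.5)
Your proof is correct, but it takes a different decomposition from the paper's.

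The paper replaces the elements of $F=\{a_1,\ldots,a_n\}$ by atoms one coordinate at a time. Writing $a_1=c_1+\ldots+c_m$, it uses
$\{a_1,\ldots,a_n\}\preceq\{c_1,a_2,\ldots,a_n\}+\ldots+\{c_m,a_2,\ldots,a_n\}$.
This is immediate to check: a join of the summands either contains some $a_i$ with $i\geq 2$, or equals $a_1$. It then applies \eqref{K3} and \eqref{K4-general} to get some $\{c_j,a_2,\ldots,a_n\}\in\scrK$, and repeats for each coordinate, which is an implicit induction.

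You decompose all coordinates at once, with a single sum over all $N=n_1\cdots n_k$ transversals. One application of \eqref{K3} and \eqref{K4-general} then suffices and no iteration is needed. The price is your diagonal argument.

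That argument is the finite distributive law for stacks in disguise. Since $\upop(F+G)=\upop F\cap\upop G$ and $\upop(F\cup G)=\upop F\cup\upop G$, one gets
$\upop F=\bigcup_j\bigcap_l\upop a_{j,l}=\bigcap_\sigma\bigcup_j\upop\sigma(j)=\upop(G_{\sigma_1}+\ldots+G_{\sigma_N})$.
So you in fact have $F\sim G_{\sigma_1}+\ldots+G_{\sigma_N}$, not merely $F\preceq G_{\sigma_1}+\ldots+G_{\sigma_N}$.

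Your support argument also never uses that the $a_{j,l}$ are atoms, only that $f_j$ is their join. It therefore gives a slightly more general transversal principle: for a finite $F\in\scrK$ and any finite join decompositions of its members, some transversal lies in $\scrK$. The paper's single step is the special case where only one coordinate is decomposed nontrivially.

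You also correctly assign the easy, \eqref{K3}-based direction to right-to-left; the paper's text has the two directions swapped.
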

\begin{proof}
The implication from left to right is an immediate consequence of \eqref{K3}. To establish the other one, consider $\{ a_1, \dots, a_n \} \in \scrK$. We need to prove that there exist $b_1, \dots, b_n \in \Atom(B)$ such that $\{ b_1, \dots, b_n \} \in \scrK$ and $b_i \leq a_i$ for every $i \leq n$. To this end, it is sufficient to show that for every $i \leqslant n$ there is a $b_i \in \Atom(B)$ such that $b_i \leq a_i$ and $\{ a_1, \dots, a_{i-1}, b_i, a_{i+1}, \dots, a_n \} \in \scrK$. Without loss of generality, we prove it for $i=1$. Since $B$ is finite, there exist $ c_1, \dots, c_m \in \Atom(B)$ for which $a_1=c_1 + \ldots + c_m$. Observe that
\begin{equation}\label{eq: support sum atoms}
\{a_1, \dots, a_n\} \preceq \{c_1, a_2, \dots, a_n\} + \ldots + \{c_m, a_2, \dots, a_n\}.
\end{equation}
Indeed, every element $d$ in the set of joins above either is the join of a collection that includes $a_i$ for some $i \le n$, or not, in which case $d=c_1+\ldots+c_m$, and then $a_1\leqslant d$.
This establishes \eqref{eq: support sum atoms}. As $\scrK$ is an ultracontact, \eqref{eq: support sum atoms} together with \eqref{K3} and \eqref{K4-general} yield that there exists $c_j$ for some $j \leqslant m$ such that $\{c_j, a_2, \dots, a_n\} \in \scrK$. It is then sufficient to pick $b_1\defeq c_j$. This concludes the proof.
\end{proof}

The lemma lets us point to a connection of the central concept of this paper to the concept of \emph{simplicial complex} known from algebraic topology. Let us recall the definition.

\begin{definition}[{\citealp[p.\,108]{Spanier-AT}}]
    An \emph{abstract simplicial complex} is a pair $\klam{V,\Sigma}$ such that $V$ is a set whose elements are called \emph{vertices} and $\Sigma$ is a family of finite non-empty subsets of $V$, the elements of $\Sigma$ are called \emph{simplexes} and satisfy the following two conditions:
    \begin{enumerate}[label=(SC\arabic*),ref=SC\arabic*]
        \item every singleton subset of $V$ is in $\Sigma$ (for every $v\in V$, $\{v\}$ is a simplex),
        \item If $S\in\Sigma$ and $\emptyset\neq T\subseteq S$, then $T\in\Sigma$ ($\Sigma$ is closed under non-empty subsets, i.e., every non-empty subset of a~simplex is a~simplex).
    \end{enumerate}
    Given a set of vertices $V$, by $\SC(V)$ we mean the collection of all simplexes on $V$, i.e., all those $\Sigma\subseteq V$ for which $\klam{V,\Sigma}$ is an abstract simplicial complex.\QED
\end{definition}

\begin{lemma}\label{lem:UC-to-SC}
    If $B$ is a finite algebra, then for every $\scrK\in\UC(B)$, $\scrK\cap\powerne{\Atom(B)}\in\SC(\Atom(B))$.
\end{lemma}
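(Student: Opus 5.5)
We set $\Sigma\defeq\scrK\cap\powerne{\Atom(B)}$ and check directly that $\klam{\Atom(B),\Sigma}$ satisfies the definition of an abstract simplicial complex. There are three things to verify: every element of $\Sigma$ is a finite non-empty subset of $\Atom(B)$, condition (SC1), and condition (SC2).

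First, membership in $\Sigma$ requires membership in $\powerne{\Atom(B)}$. So every simplex is a non-empty subset of $\Atom(B)$, and it is finite because $B$ is finite; hence $\Sigma$ is a family of finite non-empty subsets of the vertex set. For (SC1), take any atom $a$. Then $a\neq\zero$, so \eqref{K2} gives $\{a\}\in\scrK$. Also $\{a\}\in\powerne{\Atom(B)}$, and therefore $\{a\}\in\Sigma$.

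For (SC2), let $S\in\Sigma$ and $\emptyset\neq T\subseteq S$. Since $S\in\scrK$, Lemma~\ref{lem:K-down-closed} (closure of $\scrK$ under non-empty subsets, ultimately from Proposition~\ref{prop:support-very-basic}(7) and \eqref{K3}) gives $T\in\scrK$. Moreover $T$ is a non-empty subset of $S\subseteq\Atom(B)$, so $T\in\powerne{\Atom(B)}$. Thus $T\in\Sigma$.

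None of these steps presents a real obstacle; the heaviest ingredient is Lemma~\ref{lem:K-down-closed}. The only point needing care is that the definition of $\SC(V)$ reads ``$\Sigma\subseteq V$'', which should be understood as $\Sigma\subseteq\powerne{V}$. We adopt that reading, and under it the verification above is complete.
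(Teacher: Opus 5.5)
Your proof is correct and follows the paper's route: (SC1) comes from \eqref{K2}, and (SC2) from closure of $\scrK$ under non-empty subsets. The paper derives that closure inline from Proposition~\ref{prop:support-very-basic}(7) and \eqref{K3}, which is exactly the content of Lemma~\ref{lem:K-down-closed} that you cite; your explicit finiteness check and your reading of the typo in the definition of simplicial complexes are harmless additions.
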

\begin{proof}
(SC1) Let $a \in \Atom(B)$. Then $a \neq\zero$, and so $\{ a \} \in \scrK$ by \eqref{K2}. Therefore, $\scrK \cap 2^{\Atom(B)}_+$ contains all the singletons.

\smallskip

(SC2) Consider $F,G \in 2^{\Atom(B)}_+$ such that $F \subseteq G$ and $G \in \scrK$. Then $G \preceq F$ by Proposition~\ref{prop:support-very-basic}(7) and \eqref{K3} we obtain that $F \in \scrK$. So, $\scrK \cap 2^{\Atom(B)}_+$ is closed for non-empty subsets.
\end{proof}

\begin{theorem}\label{thm:correspondence ultracontact finite}
If $B$ is a finite algebra, then the mapping
\begin{equation*}
\sigma\colon\UC(B)\to\SC(\Atom(B))\quad\text{such that}\quad \sigma(\scrK)\defeq\scrK\cap\powerne{\Atom(B)}
\end{equation*}
is an order isomorphism (w.r.t. the set-theoretical inclusion).
\end{theorem}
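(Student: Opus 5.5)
By Lemma~\ref{lem:UC-to-SC}, $\sigma$ is well defined: since $B$ is finite, every subset of $\Atom(B)$ is finite, so $\sigma(\scrK)$ is a family of finite non-empty sets of atoms satisfying (SC1) and (SC2). It is clearly monotone. To show that $\sigma$ is an order isomorphism, I will do three things: show that $\sigma$ reflects the order, which also gives injectivity; construct an inverse map $\tau\colon\SC(\Atom(B))\to\UC(B)$; and check that $\sigma\circ\tau$ is the identity.

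\emph{Order reflection.} Suppose $\sigma(\scrK_1)\subseteq\sigma(\scrK_2)$ and let $F\in\scrK_1$. By Lemma~\ref{lem: finite ultracontact and atoms} there is $G\subseteq\Atom(B)$ with $G\preceq F$ and $G\in\scrK_1$. Then $G\neq\emptyset$, since $\emptyset\notin\scrK_1$, so $G\in\sigma(\scrK_1)\subseteq\scrK_2$. By \eqref{K3}, $F\in\scrK_2$. Hence $\scrK_1\subseteq\scrK_2$.

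\emph{Surjectivity.} For $\Sigma\in\SC(\Atom(B))$ I put
\[
\tau(\Sigma)\defeq\{F\in\powerne{B}\mid (\exists S\in\Sigma)\,S\preceq F\}=\bigcup_{S\in\Sigma}\Upop S.
\]
The key observation is that for a set $S$ of atoms, $\Upop S=\powerne{\bigcup_{a\in S}\upop a}$, and $\bigcup_{a\in S}\upop a$ is a union of (principal) ultrafilters, hence a grill by Theorem~\ref{thm:ultragrill}. Therefore
\[
\tau(\Sigma)=\Kmin^{\calG},\qquad \calG\defeq\Bigl\{\textstyle\bigcup_{a\in S}\upop a\;\Bigm|\; S\in\Sigma\Bigr\},
\]
once I check that $\Kmin\subseteq\tau(\Sigma)$. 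This holds because every $M\in\Kmin$ has a non-zero lower bound, hence an atom $a$ below all of its elements, and $\{a\}\in\Sigma$ by (SC1). So $\tau(\Sigma)$ is an ultracontact by Theorem~\ref{th:UC-from-grills}, or equivalently by Lemma~\ref{lem:Kmin-from-atoms} applied repeatedly together with Theorem~\ref{th:lattice-of-UCAs}. This avoids verifying \eqref{K4} by hand.

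\emph{The identity $\sigma(\tau(\Sigma))=\Sigma$.} The inclusion $\supseteq$ is immediate, since $S\preceq S$. For $\subseteq$, take $T\subseteq\Atom(B)$ non-empty with $S\preceq T$ for some $S\in\Sigma$. Each $t\in T$ lies above some $s\in S$, and since both are atoms, $s=t$. Thus $T\subseteq S$, and $T\in\Sigma$ by (SC2).

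Combining these, $\sigma$ is a bijection that preserves and reflects inclusion, hence an order isomorphism. The step where the real work lies is order reflection, and that work is already done in Lemma~\ref{lem: finite ultracontact and atoms}. The only other delicate point is confirming that $\tau(\Sigma)$ satisfies \eqref{K4}, which I handle through the grill representation above rather than directly.
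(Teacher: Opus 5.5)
Your proof is correct. Its skeleton is the paper's: order reflection through Lemma~\ref{lem: finite ultracontact and atoms}, the same inverse $\tau(\Sigma)=\{F\in\powerne{B}\mid(\exists S\in\Sigma)\,S\preceq F\}$, and the same atom-minimality argument for $\sigma(\tau(\Sigma))=\Sigma$.

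The only divergence is in proving that $\tau(\Sigma)$ is an ultracontact. The paper checks \eqref{K0}--\eqref{K4} by hand. Its substantive step is \eqref{K4}, done by contraposition from the fact that an atom below $f+g$ lies below $f$ or below $g$. You instead identify $\tau(\Sigma)$ with $\scrK_{\min}^{\calG}$ for the grills $\bigcup_{a\in S}\upop a$ ($S\in\Sigma$) and cite Theorem~\ref{th:UC-from-grills}. In your version the join-primality of atoms is hidden in the fact that each $\upop a$ is an ultrafilter, and \eqref{K4} comes from the primality of grills (Proposition~\ref{prop:prime-for-big-union}).

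The paper's route is self-contained and elementary. Yours is shorter and reuses the extension machinery of Section~\ref{sec:new-UCs}. It also yields a structural fact the paper leaves implicit: since $\tau=\sigma^{-1}$, every ultracontact on a finite algebra is of the form $\scrK_{\min}^{\calG}$ for some family $\calG$ of grills.

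Your reduction tacitly uses $\Sigma\neq\emptyset$ and $S\neq\emptyset$ for each $S\in\Sigma$. An empty $\calG$ gives $\scrK_{\min}^{\calG}=\emptyset$, and an empty union of ultrafilters is not a grill. Both conditions hold because $B$ is non-trivial and simplexes are non-empty, but this deserves a sentence.
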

\begin{proof}
Thanks to Lemma~\ref{lem:UC-to-SC}, $\sigma$ is well-defined, and it is obvious that it preserves inclusions. In order to show that it also reflects inclusions, suppose that $\scrK$ and $\scrK'$ are two ultracontacts on $B$ such that $\scrK \cap 2^{\Atom(B)}_+ \subseteq \scrK' \cap 2^{\Atom(B)}_+$. Consider $F \in \scrK$. Then Lemma~\ref{lem: finite ultracontact and atoms} yields $G \subseteq \Atom(B)$ such that  $G \preceq F$ and $G \in \scrK$. By \eqref{K0}, $F \neq \emptyset$, and hence $G \preceq F$ implies that $G \neq \emptyset$ as well. So, $G \in \scrK \cap 2^{\Atom(B)}_+$. From $\scrK \cap 2^{\Atom(B)}_+ \subseteq \scrK' \cap 2^{\Atom(B)}_+$ it follows that $G \in \scrK'$. Since $\scrK'$ is an ultracontact and $G \preceq F$, we obtain that $F \in \scrK'$ by \eqref{K3}. This shows that $\scrK \subseteq \scrK'$.

Since the assignment reflects inclusions, it is injective. It remains to show that it is also surjective. Let $\klam{\Atom(B),\Sigma}$ be an abstract simplicial complex. Define $\scrK \subseteq 2^B_+$ by setting
\[
\scrK\defeq\{ F \in 2^B_+ \mid \text{there exists } H \in \Sigma \text{ with } H \preceq F\}.
\]
We need to prove that $\scrK$ is an ultracontact and $\sigma(\scrK) = \Sigma$. Let $F, G \subseteq B$.

\smallskip

\eqref{K0} holds for  $\scrK$ by the definition.

\smallskip

\eqref{K1} For the sake of contradiction, suppose that $F \in \scrK$ and $\zero \in F$. Then there exists $H \in \Sigma$ with $H \preceq F$. So, $\zero \in H$, which is not possible because $H \subseteq \Atom(B)$.

\smallskip

\eqref{K2} Let $x \in B$. Since $B$ is finite, there exists $a \in \Atom(B)$ such that $a \leq x$. So, $\{a\} \preceq \{x\}$ and $\{a\} \in \Sigma$ because $\Sigma$ contains all singletons of $\Atom(B)$. The definition of $\scrK$ implies that $\{x\} \in \scrK$.

\smallskip

\eqref{K3} Assume that $F \preceq G \neq \emptyset$ and $F \in \scrK$. Since $F \in \scrK$, there exists $H \in \Sigma$ with $H \preceq F$. Since $H \preceq F$ and $F \preceq G$, Proposition~\ref{prop:support-very-basic}(4) yields that $H \preceq G$.

\smallskip

\eqref{K4} We prove this axiom by contraposition assuming that $F,G \notin \scrK$. Let $H \in \Sigma$. Since  $F,G \notin \scrK$, the definition of $\scrK$ implies that $H \npreceq F$ and $H \npreceq G$. Therefore, there exist $f \in F$ and $g \in G$ such that $a \nleq f$ and $a \nleq g$ for every $a \in H$. Since $H \subseteq \Atom(B)$, we obtain that $a \nleq f + g$ for every $a \in H$. We conclude that $H \npreceq F+G$. Since $H$ is an arbitrary element of $\Sigma$, it follows that $F+G \notin \scrK$.

\smallskip

Thus, we know that $\scrK$ is an ultracontact on $B$. It remains to prove that $\sigma(\scrK)=\Sigma$. As $H \preceq H$ for every $H \in \Sigma$, the inclusion from right to left is immediate. To prove the other inclusion, consider $F \in\sigma(\scrK)\defeq \scrK \cap 2^{\Atom(B)}_+$. Then $F \in 2^{\Atom(B)}_+$ and there exists $H \in \Sigma$ such that $H \preceq F$. So, for every $f \in F$ there exists $h \in H$ such that $h \leq f$. Since $H,F \subseteq \Atom(B)$, we have that $h \leq f$ implies $h=f$ for every $h \in H$ and $f \in F$. Thus, $F \subseteq H$. As $\Sigma$ is closed under non-empty subsets, we obtain that $F \in \Sigma$. This establishes the desired inclusion. Hence, $\sigma(\scrK) = \Sigma$.

This ends the proof of Theorem~\ref{thm:correspondence ultracontact finite}.
\end{proof}

\begin{corollary}
Let $B$ be a finite Boolean algebra. If two ultracontacts $\scrK_1$ and $\scrK_2$ on $B$ coincide on $2^{\Atom(B)}$, then $\scrK_1 = \scrK_2$.
\end{corollary}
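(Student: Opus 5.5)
This corollary is a direct consequence of Theorem~\ref{thm:correspondence ultracontact finite}, specifically of the fact that $\sigma$ reflects inclusions and hence is injective. The plan is to rewrite the hypothesis in terms of $\sigma$.

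\emph{Step 1: restate the hypothesis.} To say that $\scrK_1$ and $\scrK_2$ coincide on $2^{\Atom(B)}$ means
\[
\scrK_1\cap 2^{\Atom(B)}=\scrK_2\cap 2^{\Atom(B)}.
\]
By \eqref{K0}, neither ultracontact contains $\emptyset$. So intersecting with $2^{\Atom(B)}$ is the same as intersecting with $\powerne{\Atom(B)}$, and the hypothesis reads $\sigma(\scrK_1)=\sigma(\scrK_2)$.

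\emph{Step 2: apply injectivity.} Theorem~\ref{thm:correspondence ultracontact finite} says $\sigma$ is an order isomorphism, in particular injective, so $\scrK_1=\scrK_2$.

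If one prefers an argument that does not cite the theorem as a black box, the same conclusion follows directly from Lemma~\ref{lem: finite ultracontact and atoms}. Take $F\in\scrK_1$; then $F$ is non-empty by \eqref{K0}. The lemma gives some $G\subseteq\Atom(B)$ with $G\preceq F$ and $G\in\scrK_1$. Since $G\preceq F$ and $F\neq\emptyset$, also $G\neq\emptyset$. Hence $G\in\scrK_1\cap 2^{\Atom(B)}=\scrK_2\cap 2^{\Atom(B)}$, so $G\in\scrK_2$. Then \eqref{K3} gives $F\in\scrK_2$. By symmetry $\scrK_2\subseteq\scrK_1$, and the two ultracontacts are equal.

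The only point needing care is the role of the empty set, which \eqref{K0} handles. Otherwise the argument is immediate.
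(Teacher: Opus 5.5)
Your proposal is correct and matches the paper. There the corollary is stated as an immediate consequence of the injectivity of $\sigma$ from Theorem~\ref{thm:correspondence ultracontact finite}, and your use of \eqref{K0} to pass from $2^{\Atom(B)}$ to $\powerne{\Atom(B)}$ is exactly the bookkeeping needed. Your unpacked alternative via Lemma~\ref{lem: finite ultracontact and atoms} and \eqref{K3} reproduces the paper's own argument that $\sigma$ reflects inclusions.
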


\begin{lemma}\label{lem:contacts and atoms}
Let $\klam{B,\con}$ be a finite Boolean contact algebra and $x,y \in B$. Then $x \con y$ iff there exist $a,b \in \Atom(B)$ such that $\{a,b\} \preceq \{x,y\}$ and $a \con b$.
\end{lemma}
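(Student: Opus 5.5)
The plan is to prove both directions directly from the contact axioms \eqref{C0}--\eqref{C4}, mirroring the argument of Lemma~\ref{lem: finite ultracontact and atoms} but for a binary relation.

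The right-to-left implication is routine. Suppose $a,b\in\Atom(B)$, $a\con b$ and $\{a,b\}\preceq\{x,y\}$. Then there is an atom below $x$ and an atom below $y$, each chosen from $\{a,b\}$. I will split into cases.
\begin{enumerate}[label=(\roman*)]
\item If $a\leq x$ and $b\leq y$, then $a\con b$ gives $a\con y$ by \eqref{C3}. Applying \eqref{C2}, \eqref{C3} and \eqref{C2} again yields $x\con y$.
\item The case $b\leq x$ and $a\leq y$ is symmetric to (i).
\item If one atom, say $a$, lies below both $x$ and $y$, then $a\neq\zero$ gives $a\con a$ by \eqref{C1}. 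Monotonicity in both arguments then gives $x\con y$.
\end{enumerate}
So monotonicity of $\con$ in both arguments, obtained from \eqref{C2} and \eqref{C3}, closes all cases.

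For left-to-right, assume $x\con y$. By \eqref{C0} and \eqref{C2}, both $x$ and $y$ are non-zero. Since $B$ is finite, write $x=c_1+\ldots+c_m$ and $y=d_1+\ldots+d_k$ with all $c_i,d_j\in\Atom(B)$. Applying \eqref{C4} repeatedly (induction on $m$) to $y\con x$, which we have by \eqref{C2}, yields an $i$ with $y\con c_i$. Then $c_i\con y$ by symmetry, and applying \eqref{C4} again over the decomposition of $y$ yields a $j$ with $c_i\con d_j$. Put $a\defeq c_i\leq x$ and $b\defeq d_j\leq y$. Then $a\leq x$ and $b\leq y$, so $\{a,b\}\preceq\{x,y\}$, and $a\con b$ holds, as required.

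The only point needing care is the iterated use of \eqref{C4}, which is a straightforward induction on the number of atoms, analogous to \eqref{K4-general}. I expect no genuine obstacle. The main thing to watch is the case $a=b$, where $\{a,b\}$ is a singleton; the definition of $\preceq$ handles this uniformly.
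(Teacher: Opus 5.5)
Your proof is correct and follows the same route as the paper: for the forward direction you decompose $x$ and $y$ into atoms and peel them off with \eqref{C2} and \eqref{C4}, and for the converse you use monotonicity from \eqref{C2} and \eqref{C3}. Your case (iii), where a single atom lies below both $x$ and $y$, genuinely needs \eqref{C1} to obtain $a\con a$; the paper's one-line justification via \eqref{C2} and \eqref{C3} leaves this implicit, so your version is slightly more complete.
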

\begin{proof}
    Since the algebra is finite and $x,y\in B$, there are atoms $a_1,\ldots,a_k$ and $b_1,\ldots,b_n$ such that $x=a_1+\ldots+a_k$ and $y=b_1+\ldots+b_n$. If $x\con y$, then $a_1+\ldots a_k\con b_1+\ldots+b_n$. By \eqref{C2} and \eqref{C4} we may reduce the left-hand side to an $a_i$ with $1\leqslant i\leqslant k$, and the right-hand side to $b_j$ with $1\leqslant j\leqslant n$ such that $a_i\con b_j$. Clearly, $\{a_i,b_j\}\preceq\{x,y\}$. The other direction is a direct consequence of \eqref{C2} and \eqref{C3}.
\end{proof}

\begin{proposition}\label{prop:ultracontact and contacts finite}
If $\klam{B,\scrK}$ is a finite ultracontact algebra, then:
\begin{enumerate}[label=(\arabic*),ref=\arabic*,itemsep=0pt]
\item\label{prop:ultracontact and contacts finite:1} $x \Kon y$ iff there exist $a,b \in \Atom(B)$ such that $\{a,b \} \preceq \{x,y\}$ and $\{a, b \} \in\sigma(\scrK)$.
\item\label{prop:ultracontact and contacts finite:2} Let $\con$ be a contact on $B$. Then $\con = \Kon$ iff for all $a,b \in \Atom(B)$ we have
\[
a \con b \iff \{a,b\} \in \sigma(\scrK).
\]
\item\label{prop:ultracontact and contacts finite:3} For all $\scrK'\in\UC(B)$, $\Kon = \con_{\scrK'}$ iff $\sigma(\scrK)$ and $\sigma(\scrK')$ contain the same $2$-element sets.
\end{enumerate}
\end{proposition}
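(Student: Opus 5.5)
The plan is to prove (1) directly from Lemma~\ref{lem: finite ultracontact and atoms}, and then obtain (2) and (3) as consequences of (1) and Lemma~\ref{lem:contacts and atoms}.

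For (1), recall that $x\Kon y$ means $\{x,y\}\in\scrK$. Suppose first that $\{x,y\}\in\scrK$. Lemma~\ref{lem: finite ultracontact and atoms} gives $G\subseteq\Atom(B)$ with $G\preceq\{x,y\}$ and $G\in\scrK$.
\begin{itemize}
\item Choose $a\in G$ with $a\leq x$ and $b\in G$ with $b\leq y$.
\item Then $\{a,b\}\subseteq G$ is non-empty, so $\{a,b\}\in\scrK$ by Lemma~\ref{lem:K-down-closed}.
\item Hence $\{a,b\}\in\sigma(\scrK)$, and clearly $\{a,b\}\preceq\{x,y\}$.
\end{itemize}
The case $a=b$ causes no trouble, since singletons of atoms are in $\scrK$ anyway. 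Conversely, suppose $\{a,b\}\in\sigma(\scrK)\subseteq\scrK$ and $\{a,b\}\preceq\{x,y\}$. Then \eqref{K3} gives $\{x,y\}\in\scrK$.

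For (2), first assume $\con=\Kon$. For atoms $a,b$ we then have $a\con b$ iff $\{a,b\}\in\scrK$ iff $\{a,b\}\in\sigma(\scrK)$, since $\{a,b\}\subseteq\Atom(B)$ is non-empty. Now assume the displayed equivalence holds for all atoms. By Lemma~\ref{lem:contacts and atoms}, $x\con y$ iff there are atoms $a,b$ with $\{a,b\}\preceq\{x,y\}$ and $a\con b$. By hypothesis, $a\con b$ can be replaced by $\{a,b\}\in\sigma(\scrK)$. By (1), the resulting condition is exactly $x\Kon y$. So $\con=\Kon$.

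For (3), we use that $\con_{\scrK'}$ is a contact relation by Theorem~\ref{thm:UCtoC}.
\begin{itemize}
\item By (2) applied with $\con\defeq\con_{\scrK'}$, we get $\Kon=\con_{\scrK'}$ iff for all atoms $a,b$: $a\con_{\scrK'}b$ iff $\{a,b\}\in\sigma(\scrK)$.
\item Since $a\con_{\scrK'}b$ iff $\{a,b\}\in\sigma(\scrK')$, this says $\sigma(\scrK)$ and $\sigma(\scrK')$ contain the same sets of the form $\{a,b\}$ with $a,b$ atoms.
\item The one-element sets among these are all singletons of atoms, which both contain by (SC1). So the condition reduces to agreement on $2$-element sets.
\end{itemize}

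The only mildly delicate point is in (1): we must extract a two-element (or one-element) atom set from $G$, and this relies on closure under non-empty subsets. I do not expect any real obstacle.
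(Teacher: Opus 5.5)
Your proof is correct. Parts (2) and (3) follow the paper's reasoning. The only difference there is that the paper gets (3) by applying (1) directly to both $\scrK$ and $\scrK'$, whereas you route it through (2) and Theorem~\ref{thm:UCtoC}; this is cosmetic.

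The real divergence is in (1).

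\begin{itemize}
\item The paper notes that $\Kon$ is a contact relation (Theorem~\ref{thm:UCtoC}) and applies the contact-level reduction, Lemma~\ref{lem:contacts and atoms}, to it. This immediately produces atoms $a\leq x$ and $b\leq y$ with $a\Kon b$, i.e.\ $\{a,b\}\in\sigma(\scrK)$, and nothing more is needed.
\item You stay inside the ultracontact axioms. You apply the ultracontact-level reduction, Lemma~\ref{lem: finite ultracontact and atoms}, to $F=\{x,y\}$, and then cut the resulting atom set $G$ down to $\{a,b\}$ with Lemma~\ref{lem:K-down-closed}. You rightly flag this trimming as necessary, since that lemma says nothing about the size of $G$.
\end{itemize}

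The paper's route shows that (1) is really a statement about the binary relation $\Kon$, resting only on the axioms \eqref{C2}--\eqref{C4} once $\Kon$ is known to be a contact. Your route shows that (1) is just the two-element instance of the atom-support characterization of $\scrK$ in the finite case. For part (1) it never needs the fact that $\Kon$ is a contact.
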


\begin{proof}
\eqref{prop:ultracontact and contacts finite:1} Since $\con_\scrK$ is a contact, Lemma~\ref{lem:contacts and atoms} implies that $x \Kon y$ iff there exist $a,b \in \Atom(B)$ such that $\{a,b\} \preceq \{x,y\}$ and $a \Kon b$. By the definitions of $\Kon$ and $\sigma(\scrK)$, the latter condition is equivalent to the existence of $a,b \in \Atom(B)$ such that $\{a,b \} \preceq \{x,y\}$ and $\{a, b \} \in \sigma(\scrK)$.

\smallskip

\eqref{prop:ultracontact and contacts finite:2} Suppose that $\con = \Kon$ and let $a,b \in \Atom(B)$. Then $a \con b$ iff $a \Kon b$. The definitions of $\Kon$ and $\sigma(\scrK)$ imply that $a \Kon b$ iff $\{a,b\} \in \sigma(\scrK)$. So, $a \con b$ iff $\{a,b\} \in \sigma(\scrK)$.
The converse implication is an immediate consequence of \eqref{prop:ultracontact and contacts finite:1} and Lemma~\ref{lem:contacts and atoms}.

\smallskip

\eqref{prop:ultracontact and contacts finite:3} It follows from \eqref{prop:ultracontact and contacts finite:1} that $\Kon = \con_{\scrK'}$ iff $\sigma(\scrK)$ and $\sigma(\scrK')$ contain the same sets of cardinality at most $2$. Since both $\sigma(\scrK)$ and $\sigma(\scrK')$ contain all the singletons in $2^{\Atom(B)}_+$, we conclude that \eqref{prop:ultracontact and contacts finite:3} holds.
\end{proof}

\begin{remark}\label{rem:geometrical-meaning-1}
    Let us rephrase the points above in terms of simplicial complexes to emphasize the geometrical meaning of the proposition. If $\klam{B,\scrK}$ is a finite UCA, then we turn our attention to both the contact algebra $\frB\defeq\klam{B,\con_{\scrK}}$ and the simplicial complex $\frS\defeq\klam{\Atom(B),\sigma(\scrK)}$. From this point of view, we can express the properties above in the following way:
\begin{enumerate}[label=(\arabic*),ref=\arabic*,itemsep=0pt]
\item Regions $x$ and $y$ of $\frB$ are in contact iff they are above (in the sense of the Boolean order) vertices of a simplex of $\frS$; in particular, two atoms of the algebra are in contact if they are vertices of a simplex.
\item $\con_{\scrK}$ is the unique contact on $B$ such that the pairs of atoms in $\con_{\scrK}$ are precisely the pairs of vertices of simplexes of $\frS$.
\item An ultracontact $\scrK'$ gives rise to the same contact as $\scrK$ iff the simplicial complex $\frS'\defeq\klam{\Atom(B),\sigma(\scrK')}$ agrees with $\frS$ on $2$-element sets (which is the case when the underlying graphs of the two complexes coincide).\QED
\end{enumerate}
\end{remark}

\pagebreak

\begin{theorem}\label{thm:correspondence contact finite}
Let $\klam{B,\con}$ be a finite Boolean contact algebra. Then there exist the smallest and greatest ultracontacts on $B$ whose corresponding contact is $\con$. In particular, if $\scrK$ is an ultracontact on $B$, then:
\begin{enumerate}
\item $\scrK$ is the smallest ultracontact on $B$ such that $\con = \Kon$ iff
\[
\sigma(\scrK)= \{ \{a,b \} \in 2^{\Atom(B)}_+ \mid a \con b \}.
\]
\item $\scrK$ is the greatest ultracontact on $B$ such that $\con = \Kon$ iff
\[
\sigma(\scrK) = \{ H \in 2^{\Atom(B)}_+ \mid a \con b \text{ for all } a,b \in H\}.
\]
\end{enumerate}
\end{theorem}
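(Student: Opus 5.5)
The plan is to work entirely on the simplicial side via the order isomorphism $\sigma$ of Theorem~\ref{thm:correspondence ultracontact finite}. By Proposition~\ref{prop:ultracontact and contacts finite}(2), an ultracontact $\scrK$ satisfies $\Kon=\con$ iff, for all atoms $a,b$, $a\con b\iff\{a,b\}\in\sigma(\scrK)$. Since $\sigma$ is a bijection onto $\SC(\Atom(B))$ that preserves and reflects inclusion, the set $\{\scrK\in\UC(B)\mid \Kon=\con\}$ is order-isomorphic to the family $\mathcal{S}_\con$ of simplicial complexes $\Sigma$ on $\Atom(B)$ whose $\leq 2$-element simplexes are exactly the singletons together with the pairs $\{a,b\}$ with $a\con b$. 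It therefore suffices to exhibit the least and the greatest members of $\mathcal{S}_\con$. These are then the $\sigma$-images of the smallest and greatest ultracontacts inducing $\con$, which gives (1) and (2) by injectivity of $\sigma$.

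For the smallest member, set $\Sigma_{\min}\defeq\{\{a,b\}\in 2^{\Atom(B)}_+\mid a\con b\}$. Here $\{a,b\}$ is allowed to be a singleton, since $a\con a$ holds for every atom by \eqref{C1}. This family contains all singletons and is closed under non-empty subsets, because a non-empty subset of $\{a,b\}$ is either $\{a,b\}$ itself or a singleton. So it is a simplicial complex. Its $2$-element members are exactly the pairs in contact, so it belongs to $\mathcal{S}_\con$. Any $\Sigma\in\mathcal{S}_\con$ contains all singletons and all pairs $\{a,b\}$ with $a\con b$, hence $\Sigma_{\min}\subseteq\Sigma$.

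For the greatest member, set $\Sigma_{\max}\defeq\{H\in 2^{\Atom(B)}_+\mid a\con b \text{ for all } a,b\in H\}$, the clique complex. Its elements are finite because $B$ is finite. It contains all singletons by \eqref{C1} and is obviously closed under non-empty subsets. Its $2$-element members are exactly the pairs in contact, so $\Sigma_{\max}\in\mathcal{S}_\con$. Now take any $\Sigma\in\mathcal{S}_\con$ and $H\in\Sigma$. For $a,b\in H$, the set $\{a,b\}$ is a non-empty subset of $H$, hence a simplex of $\Sigma$, hence $a\con b$. Therefore $H\in\Sigma_{\max}$, and $\Sigma\subseteq\Sigma_{\max}$.

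The main point needing care is the translation in the first step. We need $\sigma$ to be an order isomorphism onto all of $\SC(\Atom(B))$, not just an order embedding, so that $\sigma^{-1}(\Sigma_{\min})$ and $\sigma^{-1}(\Sigma_{\max})$ exist as ultracontacts. Surjectivity is exactly what the theorem gives. With that, the characterizations in (1) and (2) follow: $\scrK$ is the smallest (greatest) ultracontact with $\Kon=\con$ iff $\sigma(\scrK)$ is the least (greatest) element of $\mathcal{S}_\con$, i.e.\ iff $\sigma(\scrK)=\Sigma_{\min}$ (respectively $\Sigma_{\max}$).
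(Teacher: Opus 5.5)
Your proposal is correct and matches the paper's own proof. Both transfer the problem along the order isomorphism $\sigma$ and use Proposition~\ref{prop:ultracontact and contacts finite}(2) to reduce it to finding the least and greatest simplicial complexes on $\Atom(B)$ whose edges are exactly the $\con$-pairs. You carry out the verification that the paper calls ``straightforward'', including the correct observation that the singletons in the smallest complex come from taking $a=b$ and using \eqref{C1}.
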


\begin{proof}
It is straightforward to verify that the sets
\[
\{ \{a,b \} \in 2^{\Atom(B)}_+ \mid a \con b \} \quad \text{and} \quad \{ H \in 2^{\Atom(B)}_+ \mid a \con b \text{ for all } a,b \in H\}
\]
are, respectively, the smallest and largest $\Sigma \in \SC(\Atom(B))$ such that $a \con b$ iff $\{a,b\} \in \Sigma$, for all $a,b \in \Atom(B)$.
Therefore, the statement follows from Theorem~\ref{thm:correspondence ultracontact finite} and Proposition~\ref{prop:ultracontact and contacts finite}\eqref{prop:ultracontact and contacts finite:2}.
\end{proof}

\begin{remark}
As in the case of Remark~\ref{rem:geometrical-meaning-1}, let us take a geometrical perspective on the theorem above. Given a finite Boolean contact algebra $\klam{B,\con}$, the smallest ultracontact $\scrK$ for which $\con_{\scrK}$ is precisely $\con$ is the ultracontact whose simplicial complex is the graph with vertices $\Atom(B)$ and whose edges are all the pairs of atoms in $\con$.  On the other hand, the complex $\klam{\Atom(B),\sigma(\scrK')}$ corresponding to the largest ultracontact $\scrK'$ with the same property that $\con=\con_{\scrK'}$ consists of all $\con$-cliques on the set of atoms.

For a more concrete example, we bring the reader's attention to the two ultracontacts $\scrK$ and $\scrK_{a,b,c}$ considered in the proof of Theorem~\ref{th:UCs-with-the-same-contact}. They are, respectively, the smallest and largest ultracontacts whose corresponding contact is the full contact on the  $8$-element Boolean algebra $B$. This can be checked easily using Theorem~\ref{thm:correspondence contact finite}.
Indeed, $\sigma(\scrK)$ consists of all non-empty subsets of $\Atom(B)$ that have at most $2$ elements, while $\sigma(\scrK_{a,b,c})$ is the collection of all non-empty subsets of $\Atom(B)$.
The simplicial complex corresponding to $\scrK$ is then a complete graph on $3$ vertices, and the one corresponding to $\scrK_{a,b,c}$ is a triangular face together with its edges and vertices (see Figure~\ref{fig:simplicial complexes}).\QED
\end{remark}
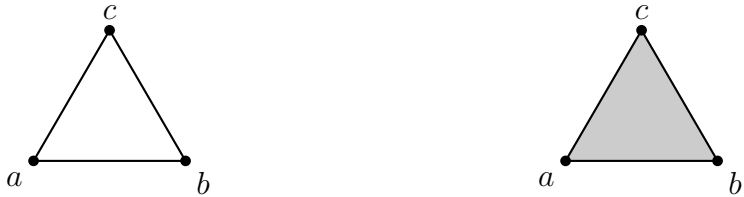
\begin{figure}[ht]
\begin{tikzpicture}[scale=2]

\begin{scope}
  \coordinate (A) at (0,0);
  \coordinate (B) at (1,0);
  \coordinate (C) at (0.5,{sqrt(3)/2});

  \draw[line width=0.8pt] (A) -- (B) -- (C) -- cycle;

  \fill (A) circle (1pt) node[below left] {$a$};
  \fill (B) circle (1pt) node[below right] {$b$};
  \fill (C) circle (1pt) node[above] {$c$};
\end{scope}

\begin{scope}[xshift=3.5cm]
  \coordinate (A') at (0,0);
  \coordinate (B') at (1,0);
  \coordinate (C') at (0.5,{sqrt(3)/2});

  \fill[gray!40] (A') -- (B') -- (C') -- cycle;
  \draw[line width=0.8pt] (A') -- (B') -- (C') -- cycle;

  \fill (A') circle (1pt) node[below left] {$a$};
  \fill (B') circle (1pt) node[below right] {$b$};
  \fill (C') circle (1pt) node[above] {$c$};
\end{scope}
\end{tikzpicture}
\caption{The simplicial complex corresponding to $\scrK$ on the left, and the one corresponding to $\scrK_{a,b,c}$ on the right}\label{fig:simplicial complexes}
\end{figure}

In light of the remarks and examples above, the following observation, with which we conclude this section, should not present any difficulties to the reader.

\begin{corollary}
 Let $\klam{B,\scrK}$ be a finite Boolean contact algebra.
 \begin{enumerate}[label=(\arabic*),ref=\arabic*,itemsep=0pt]
     \item If $\scrK=\Kmin$, then the only simplexes in $\sigma(\scrK)$ are vertices (the singletons of atoms of $B$).
     \item If $\scrK=\Kmax$, then $\sigma(\scrK)=2^{\Atom(B)}_+$, and in this case $\klam{\sigma(\scrK),\subseteq}$ is order isomorphic to $\klam{\Bne,\leq}$.
 \end{enumerate}
\end{corollary}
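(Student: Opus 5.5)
The corollary has two parts, and both follow from the description of $\Kmin$ and $\Kmax$ together with Theorem~\ref{thm:correspondence ultracontact finite}. The plan is to compute $\sigma(\scrK)=\scrK\cap\powerne{\Atom(B)}$ directly from the definitions in each case.

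For (1), take $H\in\sigma(\Kmin)$. Then $H$ is a non-empty set of atoms with a non-zero lower bound $x$. Since $x\leq a$ for each atom $a\in H$ and $x\neq\zero$, atomicity forces $x=a$ for every $a\in H$, so $H$ is a singleton $\{a\}$. Conversely, every singleton of an atom lies in $\Kmin$, since the atom itself is a non-zero lower bound (or directly by \eqref{K2}). Hence $\sigma(\Kmin)$ consists exactly of the vertices, i.e.\ the $0$-dimensional complex. This is consistent with Theorem~\ref{th:smallest-UC} and the isomorphism $\sigma$: $\Kmin$ is the least ultracontact, and the least simplicial complex on $\Atom(B)$ is the discrete one.

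For (2), note that $\zero\notin\Atom(B)$. So every non-empty set of atoms avoids $\zero$ and belongs to $\Kmax$, which gives $\sigma(\Kmax)=\powerne{\Atom(B)}$. For the order isomorphism, I will use the map $H\mapsto\bigvee H$ from $\powerne{\Atom(B)}$ to $\Bne$. Its inverse is $x\mapsto\{a\in\Atom(B)\mid a\leq x\}$, which is non-empty for $x\neq\zero$ because $B$ is finite and hence atomic. Since $B$ is finite, it is isomorphic to the power set of its atoms, with $x$ corresponding to the set of atoms below it. Under that identification, $H\subseteq H'$ iff $\bigvee H\leq\bigvee H'$, so the map preserves and reflects order, and it is a bijection onto the non-zero elements.

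There is no real obstacle. The only point needing care is the atomicity argument in (1), namely that two distinct atoms have no common non-zero lower bound. I also note that the corollary's phrase ``finite Boolean contact algebra'' should be read as ``finite ultracontact algebra'', which is how I will treat it.
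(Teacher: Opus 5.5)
Your proposal is correct. The paper gives no written proof and leaves the corollary to the reader, so your argument is exactly the intended direct verification: a non-zero common lower bound of a set of atoms must coincide with each of them, which yields $\sigma(\Kmin)$, and $\zero\notin\Atom(B)$ together with the standard identification of a finite Boolean algebra with the power set of its atoms yields $\sigma(\Kmax)=2^{\Atom(B)}_+\cong\klam{\Bne,\leq}$. Your reading of ``finite Boolean contact algebra'' as ``finite ultracontact algebra'' is also the right one.
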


\section{Stack systems}\label{sec:stacks}

In this section, we introduce the notion of a \emph{stack system} and a \emph{stack algebra} that are definitionally equivalent to ultracontact and ultracontact algebras, and offer a~different perspective on the general phenomenon we study.

\begin{definition}\label{def:SS}
We say that $\scrS\subseteq \Stack(B)$ is a \emph{stack system} if the following axioms hold for all $U,V\in \Stack(B)$ and all $x\in B$:
\begin{enumerate}[label=(SS\arabic*),ref=SS\arabic*,itemsep=3pt]
\addtocounter{enumi}{-1}
\item \label{SS0} $\emptyset\notin \scrS$.
    \item \label{SS1} $B\notin \scrS$.
    \item \label{SS2} If $x\neq\zero$, then $\upop x\in \scrS$.
    \item \label{SS3} If $\emptyset \neq U\subseteq V$ and $V\in \scrS$, then $U\in \scrS$.
    \item \label{SS4} If $U\cap V\in \scrS$, then $U\in \scrS$ or $V\in \scrS$.
\end{enumerate}
A \emph{stack algebra} is a pair $\klam{B,\scrS}$ such that $B$ is a Boolean algebra and $\scrS$ is a stack system on $B$.\QED
\end{definition}

We aim to show that ultracontact algebras and stack algebras are definitionally equivalent structures.

\begin{lemma}\label{l: preceq and + for upsets 2}
For any UCA $\klam{B,\scrK}$, if $F\in \scrK$, then $\ua{F}\in \scrK$.
\end{lemma}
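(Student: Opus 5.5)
The plan is to apply \eqref{K3} directly, with $\upop F$ in the role of $G$. Suppose $F\in\scrK$. By \eqref{K0}, $F\neq\emptyset$. Every element of $F$ lies above itself, so $F\subseteq\upop F$, and in particular $\upop F\neq\emptyset$.

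Next I will show that $F\preceq\upop F$. By Proposition~\ref{prop:support-very-basic}(6), $F\preceq G$ holds iff $G\subseteq\upop F$, and for $G=\upop F$ this is trivial. Alternatively, one can check it from Definition~\ref{def:leqUCA}: every $y\in\upop F$ satisfies $f\leq y$ for some $f\in F$, which is exactly what is required.

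With $F\in\scrK$ and $F\preceq\upop F\neq\emptyset$, axiom \eqref{K3} yields $\upop F\in\scrK$. Another route is to note $F\sim\upop F$ by Proposition~\ref{prop:support-very-basic}(9) and then apply Lemma~\ref{lem:UC-closed-for-similar}.

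The only point needing any care is the non-emptiness hypothesis in \eqref{K3}. This is handled by \eqref{K0}, which guarantees $F\neq\emptyset$ and hence $\upop F\neq\emptyset$. I expect no real obstacle beyond that.
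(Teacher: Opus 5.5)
Your proposal is correct and is essentially the paper's proof: you use \eqref{K0} to get $F\neq\emptyset$ (hence $\upop F\neq\emptyset$), Proposition~\ref{prop:support-very-basic}(6) to get $F\preceq\upop F$, and then \eqref{K3} to conclude $\upop F\in\scrK$. Your alternative route through $F\sim\upop F$ and Lemma~\ref{lem:UC-closed-for-similar} is also valid, since that lemma is proved earlier in the paper.
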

\begin{proof}
Let $F\in\scrK$, which in particular means that $F\neq\emptyset$. Then $\upop F\neq\emptyset$, and since $\upop F\subseteq\upop F$, by Proposition~\ref{prop:support-very-basic}(6), $F\preceq \upop F$. Then, by \eqref{K3} we obtain that $\upop F\in\scrK$.
\end{proof}

For an ultracontact $\scrK$, define
\begin{equation}\tag{$\mathrm{df}\,\SK$}
 \SK \defeq  \{\upop F\mid F\in \scrK\},
\end{equation}
and for a stack system $\scrS$ we put
\begin{equation}\tag{$\mathrm{df}\,\KS$}
 \KS \defeq \{F\subseteq  B\mid \upop F\in \scrS\}.
\end{equation}

\begin{proposition}\label{prop: Ku subset of K}
    For any Boolean algebra $B$,
    \begin{enumerate}
        \item $\SK\subseteq \scrK$ for every ultracontact $\scrK$.
        \item $\scrS\subseteq \KS$ for every stack system $\scrS$.
    \end{enumerate}
\end{proposition}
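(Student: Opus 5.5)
Both inclusions follow by unfolding the definitions $\mathrm{df}\,\SK$ and $\mathrm{df}\,\KS$, with Lemma~\ref{l: preceq and + for upsets 2} doing the work in the first part and the fact that stacks are fixed points of $\upop$ doing it in the second.

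For (1), I take an arbitrary element of $\SK$. By definition it has the form $\upop F$ for some $F\in\scrK$. Lemma~\ref{l: preceq and + for upsets 2} states exactly that $F\in\scrK$ implies $\upop F\in\scrK$. Hence every element of $\SK$ lies in $\scrK$, which gives $\SK\subseteq\scrK$.

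For (2), I take $U\in\scrS$. Since $\scrS\subseteq\Stack(B)$, the set $U$ is upward closed, so $\upop U\subseteq U$. The reverse inclusion $U\subseteq\upop U$ is immediate from reflexivity of $\leq$, so $\upop U=U$. Therefore $\upop U\in\scrS$, and by the definition of $\KS$ this means $U\in\KS$.

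I expect no step to be a real obstacle. The one place needing care is the identity $\upop U=U$ for stacks, which uses only Definition~\ref{def:Stack}. In part (2) there is also no need to check non-emptiness separately, because \eqref{SS0} guarantees $U\neq\emptyset$ and, in any case, membership in $\KS$ requires only $\upop U\in\scrS$.
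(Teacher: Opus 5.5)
Your proof is correct and follows the paper's own argument. In part (1) you apply Lemma~\ref{l: preceq and + for upsets 2} to $F\in\scrK$. In part (2) you use that every $U\in\scrS$ is a stack, so $\upop U=U\in\scrS$, which gives $U\in\KS$.
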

\begin{proof}
(1) If $\scrK$ is an ultracontact, and $\ua{F}\in\SK$, for some $F\in\scrK$, then $\upop F\in\scrK$ by Lemma~\ref{l: preceq and + for upsets 2}(2).

\smallskip

(2) Let $F\in\scrS$. Then, $F=\ua{F}$, since $F$ is a stack. So $F\in\KS$.
\end{proof}

\begin{lemma}\label{lem:for-last-lemma}
    For any $F,G\in 2^B$, $\upop(F+G)=\upop F\cap\upop G$.
\end{lemma}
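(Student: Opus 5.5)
The plan is to prove the equality $\upop(F+G)=\upop F\cap\upop G$ by double inclusion, working directly from the definitions of $F+G$ and of $\upop$. First I dispose of the degenerate case. If $F$ or $G$ is empty, then $F+G=\emptyset$. In that case $\upop(F+G)=\emptyset$, and also $\upop F\cap\upop G=\emptyset$, since the upward closure of the empty set is empty. So from now on I assume both $F$ and $G$ are non-empty.

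For the inclusion $\subseteq$, take $z\in\upop(F+G)$. Then there are $f\in F$ and $g\in G$ with $f+g\leq z$. Since $f\leq f+g$ and $g\leq f+g$, we get $f\leq z$ and $g\leq z$. Hence $z\in\upop F$ and $z\in\upop G$. Equivalently, this follows from Proposition~\ref{prop:support-very-basic}(1) and (6): $F\preceq F+G$ gives $F+G\subseteq\upop F$, and then $\upop(F+G)\subseteq\upop F$ because $\upop F$ is a stack. The same argument works for $G$.

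For the inclusion $\supseteq$, take $z\in\upop F\cap\upop G$. Then there are $f\in F$ and $g\in G$ with $f\leq z$ and $g\leq z$. Their join satisfies $f+g\leq z$, because $+$ is the least upper bound in $B$. Since $f+g\in F+G$, it follows that $z\in\upop(F+G)$.

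An alternative route is to apply Lemma~\ref{lem:intersection-for-stacks} to the stacks $\upop F$ and $\upop G$, which gives $\upop F\cap\upop G=\upop F+\upop G$, and then show $\upop(F+G)=\upop F+\upop G$; the direct argument above is shorter, so I will use it. There is no real obstacle in this proof. The only point needing care is the empty case, where both sides are empty and the equality still holds.
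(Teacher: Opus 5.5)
Your proof is correct and matches the paper's proof: both inclusions come from unpacking $\upop$ and $F+G$, using $f,g\leq f+g$ for $\subseteq$ and the fact that $f+g$ is the least upper bound of $f$ and $g$ for $\supseteq$. Your separate treatment of the empty case is harmless but not needed, since the element-wise argument already covers it vacuously.
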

\begin{proof}
    $(\subseteq)$ If $x\in \upop(F+G)$, then there are $f\in F$ and $g\in G$ such that $f+g\leq x$. Thus $f\leq x$ and $g\leq x$, and so $x\in\upop F\cap\upop G$.

    \smallskip

    $(\supseteq)$ If $x\in \upop F\cap\upop G$, then there are $f\in F$ and $g\in G$ such that $f\leq x$ and $g\leq x$, and so $f+g\leq x$. Thus $x\in\upop(F+G)$.
\end{proof}

\pagebreak

 \begin{lemma}\label{lem:SK-and-KS-operations}
     Let $B$ be a Boolean algebra.
     \begin{enumerate}
         \item $\SK$ is a~stack system for every ultracontact $\scrK$.
         \item $\KS$ is an ultracontact for every stack system $\scrS$.
     \end{enumerate}
 \end{lemma}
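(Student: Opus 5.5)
Both parts are proved by checking the axioms one at a time. Throughout, the tools are Proposition~\ref{prop:support-very-basic}(6),(7),(9), Lemma~\ref{lem:UC-closed-for-similar}, Lemma~\ref{l: preceq and + for upsets 2}, Lemma~\ref{lem:intersection-for-stacks} and Lemma~\ref{lem:for-last-lemma}.

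For (1), let $\scrK$ be an ultracontact. Each $\upop F$ is a stack, so $\SK\subseteq\Stack(B)$. Note that for a stack $U$ we have $\upop U=U$. Hence, by Lemma~\ref{l: preceq and + for upsets 2} and Proposition~\ref{prop: Ku subset of K}(1), a stack $U$ lies in $\SK$ iff $U\in\scrK$. So it suffices to check the axioms for stacks in $\scrK$.
\begin{itemize}
\item \eqref{SS0} follows from \eqref{K0}.
\item \eqref{SS1} follows from \eqref{K1}, since $\zero\in B$.
\item \eqref{SS2}: if $x\neq\zero$, then $\{x\}\in\scrK$ by \eqref{K2}, so $\upop x=\upop\{x\}\in\SK$.
\item \eqref{SS3}: if $\emptyset\neq U\subseteq V\in\scrK$, then $U\in\scrK$ by Lemma~\ref{lem:K-down-closed}.
\item \eqref{SS4}: if $U\cap V\in\scrK$, then by Lemma~\ref{lem:intersection-for-stacks} $U\cap V=U+V$. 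Then \eqref{K4} gives $U\in\scrK$ or $V\in\scrK$.
\end{itemize}

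For (2), let $\scrS$ be a stack system and recall $\KS=\{F\mid \upop F\in\scrS\}$.
\begin{itemize}
\item \eqref{K0}: $\upop\emptyset=\emptyset\notin\scrS$ by \eqref{SS0}.
\item \eqref{K1}: if $\zero\in F$, then $\upop F=B\notin\scrS$ by \eqref{SS1}.
\item \eqref{K2}: $\upop\{x\}=\upop x\in\scrS$ by \eqref{SS2}.
\item \eqref{K3}: suppose $F\preceq G\neq\emptyset$ and $\upop F\in\scrS$. By Proposition~\ref{prop:support-very-basic}(6), $G\subseteq\upop F$, hence $\upop G\subseteq\upop F$. Also $\upop G\neq\emptyset$, so $\upop G\in\scrS$ by \eqref{SS3}.
\item \eqref{K4}: if $\upop(F+G)\in\scrS$, then by Lemma~\ref{lem:for-last-lemma} this stack is $\upop F\cap\upop G$. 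Now \eqref{SS4} yields $\upop F\in\scrS$ or $\upop G\in\scrS$.
\end{itemize}

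None of these steps is a real obstacle. The only point needing care is in (1), in passing from membership in $\SK$ to membership in $\scrK$ for stacks. A stack $U$ belongs to $\SK$ if $U=\upop F$ for some $F\in\scrK$, and then $U\in\scrK$ by Lemma~\ref{l: preceq and + for upsets 2}. Conversely, if $U\in\scrK$ then $U=\upop U\in\SK$. With this identification in place, the remaining checks are direct.
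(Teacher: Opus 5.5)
Your proof is correct and follows essentially the same axiom-by-axiom verification as the paper; part (2) matches it step for step. The only difference is in part (1): you first record that $\SK=\scrK\cap\Stack(B)$ (via Lemma~\ref{l: preceq and + for upsets 2} and $\upop U=U$ for stacks). This lets you dispatch \eqref{SS3} by Lemma~\ref{lem:K-down-closed} and \eqref{SS4} directly by \eqref{K4}, whereas the paper re-derives membership in $\scrK$ through \eqref{K3} in each of those cases.
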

\begin{proof}
(1) Suppose that $\scrK$ is an ultracontact. We will show that $\SK$ is a stack system.

\smallskip

\eqref{SS0} If $\emptyset\in\SK$, then for some $F\in\scrK$, $\emptyset=\upop F$. But this means that $F=\emptyset$, which contradicts \eqref{K0}.

\smallskip

\eqref{SS1}  If $\ua{F}=B$ for some $F\in \scrK$, then $\zero \in F$, contradicting \eqref{K1}.

\smallskip

\eqref{SS2} If $x \neq \zero$, then $\set{x} \in \scrK$ by \eqref{K2}, and thus $\ua{x}\in \SK$.

\smallskip

\eqref{SS3} Let $U,V \in \Stack(B), V \in \SK$, and $\emptyset \neq U \subseteq V$. By definition of $\SK$ there is some $F \in \scrK$ such that $\ua{F} = V$. By Proposition~\ref{prop:support-very-basic}(6) $F \preceq U$, and \eqref{K3} implies $U \in \scrK$.  Now, $U = \ua{U}$ shows that $U \in \SK$.

\smallskip

\eqref{SS4} Let $U,V \in \Stack(B)$ and $U \cap V \in \SK$. By Lemma~\ref{lem:intersection-for-stacks}, $U + V \in \SK$, so there is an $M\in\scrK$ such that $U+V=\upop M$, and in consequence $M\preceq U+V$ by Proposition~\ref{prop:support-very-basic}(6). By \eqref{K3}, $U+V\in\scrK$, and by \eqref{K4}, either $U$ or $V$ is in $\scrK$. If it is $U$, then $\ua{U}\in\SK$, and since $U = \ua{U}$, we obtain $U\in\SK$.  If it is $V$, then similarly, $V\in\SK$.

\smallskip

(2) Suppose that $\scrS$ is stack system. We will show that $\KS$ is an ultracontact.

  \smallskip

    \eqref{K0} Since $\emptyset\not\in\scrS$ by \eqref{SS0}, we also have $\emptyset\notin \KS$.

\smallskip

    \eqref{K1} Suppose that $\zero\in F \subseteq B$. Then $\ua{F} = B \not\in \scrS$ which implies $F \not\in \KS$.

\smallskip

    \eqref{K2} If $x \neq \zero$, then $\ua{x} \in \scrS$ by \eqref{SS2} which implies $\set{x} \in \KS$.

\smallskip

    \eqref{K3} Let $F \in \KS, G \neq \emptyset$, and $F \preceq G$. By Proposition~\ref{prop:support-very-basic}(6) we have $G \subseteq \ua{F}$. Then, $\ua{G} \subseteq \ua{F}$, and $\ua{F} \in \scrS$ since $F \in \KS$. Now, \eqref{SS3} implies that $\ua{G} \in \scrS$, and therefore, $G \in \KS$.

\smallskip

    \eqref{K4} Let $F + G \in \KS$. Then, $\ua{(F+G)} \in \scrS$ by definition of $\KS$, and therefore, $\ua{F} \cap \ua{G} \in \scrS$ by Lemma~\ref{lem:for-last-lemma}. It follows from \eqref{SS4} that either $\ua{F} \in \scrS$ which implies $F \in \KS$, or $\ua{G} \in \scrS$ which implies $G\in \KS$
\end{proof}

\begin{theorem}\label{th:UC-one-to-one-SS}
The mappings $\scrK\mapsto\SK$ and $\scrS\mapsto\KS$ are mutual inverses, that is $\scrK=\scrK_{\SK}$ and $\scrS=\scrS_{\KS}$, so for any Boolean algebra $B$, there is a one-to-one correspondence between ultracontacts and stack systems on~$B$.
\end{theorem}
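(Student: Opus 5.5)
We must prove $\scrK=\scrK_{\SK}$ for every ultracontact $\scrK$ and $\scrS=\scrS_{\KS}$ for every stack system $\scrS$. Lemma~\ref{lem:SK-and-KS-operations} guarantees that both composites are again an ultracontact and a stack system, respectively. So all that remains is to verify the two set equalities. Each equality will be checked by a pair of inclusions, using only the definitions $(\mathrm{df}\,\SK)$ and $(\mathrm{df}\,\KS)$, Lemma~\ref{l: preceq and + for upsets 2}, and the fact that an ultracontact is closed under similarity (Lemma~\ref{lem:UC-closed-for-similar} together with Proposition~\ref{prop:support-very-basic}(9)).

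For $\scrK=\scrK_{\SK}$, unfold the definitions: $\scrK_{\SK}=\{F\subseteq B\mid \upop F\in\SK\}$. Note first that $\SK$ equals the set of stacks belonging to $\scrK$. Indeed, if $F\in\scrK$, then $\upop F\in\scrK$ by Lemma~\ref{l: preceq and + for upsets 2}. Conversely, a stack $U\in\scrK$ satisfies $U=\upop U$, so $U\in\SK$.

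\emph{Inclusion $\scrK\subseteq\scrK_{\SK}$.} If $F\in\scrK$, then $\upop F\in\SK$ by definition, hence $F\in\scrK_{\SK}$.

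\emph{Inclusion $\scrK_{\SK}\subseteq\scrK$.} If $\upop F\in\SK$, then $\upop F\in\scrK$ by Proposition~\ref{prop: Ku subset of K}(1). Since $F\sim\upop F$ and $F\neq\emptyset$ (otherwise $\upop F=\emptyset\notin\SK$ by \eqref{SS0}), Lemma~\ref{lem:UC-closed-for-similar} gives $F\in\scrK$.

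For $\scrS=\scrS_{\KS}$, recall that $\scrS_{\KS}=\{\upop F\mid F\in\KS\}=\{\upop F\mid \upop F\in\scrS\}$.

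\emph{Inclusion $\scrS_{\KS}\subseteq\scrS$.} Every element of $\scrS_{\KS}$ has the form $\upop F$ with $\upop F\in\scrS$, so it lies in $\scrS$.

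\emph{Inclusion $\scrS\subseteq\scrS_{\KS}$.} Given $U\in\scrS$, $U$ is a stack, so $\upop U=U\in\scrS$. Hence $U\in\KS$ (this is Proposition~\ref{prop: Ku subset of K}(2)), and therefore $U=\upop U\in\scrS_{\KS}$.

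Mutual inverseness then yields that both maps are bijections between $\UC(B)$ and $\STS(B)$. Both maps are also clearly monotone with respect to inclusion, so the correspondence is even an order isomorphism, though that is not required here.

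The only mildly delicate point is the inclusion $\scrK_{\SK}\subseteq\scrK$, because an $F$ with $\upop F\in\scrK$ need not itself be a stack. This is exactly where closure under similarity is needed, and where non-emptiness must be secured so that \eqref{K3} applies. Everything else is direct unfolding of definitions.
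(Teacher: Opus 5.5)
Your proof is correct and takes essentially the same route as the paper. Both equalities come from unfolding $(\mathrm{df}\,\SK)$ and $(\mathrm{df}\,\KS)$, and the only substantive step is $\scrK_{\SK}\subseteq\scrK$. There the paper picks $G\in\scrK$ with $\upop F=\upop G$ and applies \eqref{K3} to $G\preceq F$, whereas you pass through $\upop F\in\scrK$ and closure under similarity, which amounts to the same argument. Your use of \eqref{SS0} to get $F\neq\emptyset$ is cleaner than the paper's appeal to $\emptyset\notin\Stack(B)$, since the empty set is in fact a stack.
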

\begin{proof}
First, let us check $\scrK=\scrK_{\SK}$ for every ultracontact $\scrK$. The inclusion $\scrK\subseteq \scrK_{\SK}$ is clear, since if $F\in \scrK$ then $\ua{F}\in \SK$, by definition, hence $F\in \scrK_{\SK}$. For the other inclusion, suppose that $F\subseteq B$ is such that $\ua{F}\in \SK$. Note that $F\neq \emptyset$ as $\ua{\emptyset}=\emptyset$ and $\emptyset\notin \Stack(B)$. This means that there is $G\in \scrK$ with $\ua{F}=\ua{G}$. In particular, $F\subseteq \ua{G}$, that is $G\preceq F$ by Proposition~\ref{prop:support-very-basic}(6), and so $F\in \scrK$ by \eqref{K3}.

\smallskip

Let us now check that $\scrS=\scrS_{\KS}$, for all stack systems $\scrS$. If $U\in \scrS$ then $U\in \KS$ by Proposition \ref{prop: Ku subset of K}(2),  and it follows that $\ua{U}=U\in \scrS_{\KS}$. Conversely, let $U\in \scrS_{\KS}$. Then $U=\ua{F}$ for some $F\in \KS$. As $F\in \KS$, it follows that $\ua{F}=U\in \scrS$.
\end{proof}

Given a Boolean algebra $B$, let $\STS(B)$ be the poset of all stack systems on $B$ ordered by the set-theoretical inclusion. From the definitions, for any ultracontacts $\scrK_1$, $\scrK_2$, and any stack systems $\scrS_1$, $\scrS_2$ the following equivalences hold
\begin{align*}
    \scrK_1\subseteq\scrK_2\iff\scrS_{\scrK_{1}}\subseteq\scrS_{\scrK_{2}},\\
    \scrS_1\subseteq\scrS_2\iff\scrK_{\scrS_{1}}\subseteq\scrK_{\scrS_{2}}.
\end{align*}
Moreover, by Theorem~\ref{th:UC-one-to-one-SS}, we know that there is a~bijective correspondence between ultracontacts and stack systems on $B$. This implies that $\STS(B)$ is a complete lattice. In particular, $\scrS_{\Kmin}$ and $\scrS_{\Kmax}$ are the zero $\Smin$ and the unity $\Smax$ of $\STS(B)$. We can describe precisely what they are.

\begin{proposition} For every Boolean algebra $B$
\begin{align*}
\Smin&{}=\{M\in \Stack(B)\mid M\subseteq \ua{x},\text{ for some }x\in B^+\},\\
\Smax&{}=\Stack(B)\setminus\{\emptyset\}
\end{align*}
are---respectively---the zero and the unity of $\STS(B)$.
\end{proposition}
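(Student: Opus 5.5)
The plan is to compute $\scrS_{\Kmin}$ and $\scrS_{\Kmax}$ directly from the definition $\SK=\{\upop F\mid F\in\scrK\}$. The paragraph preceding the statement already explains, via Theorem~\ref{th:UC-one-to-one-SS} and the order-equivalences, why these two systems are the zero and the unity of $\STS(B)$. So all that remains is to identify them with the displayed sets. Throughout I follow the convention the paper itself uses in the proof of Theorem~\ref{th:UC-one-to-one-SS}: the empty set is not counted among the stacks that can appear in a stack system, as forced by \eqref{SS0}.

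\emph{The set $\Smin$.} Take $M\in\scrS_{\Kmin}$, so $M=\upop F$ for some $F\in\Kmin$. Then $F$ has a non-zero lower bound $x$, so $F\subseteq\upop x$. Since $\upop x$ is upward closed, $M=\upop F\subseteq\upop x$. Conversely, let $M$ be a (non-empty) stack with $M\subseteq\upop x$ for some $x\in\Bne$. Then $x\leq M$, so $M\in\Kmin$. Because $M$ is a stack, $M=\upop M\in\scrS_{\Kmin}$. Alternatively, one can use \eqref{SS2} and \eqref{SS3}: the stack $\upop x$ lies in every stack system, and hence so does every non-empty substack of it. This second route also shows minimality directly.

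\emph{The set $\Smax$.} If $F\in\Kmax$, then $F\neq\emptyset$ and $\zero\notin F$. Hence $\upop F$ is a non-empty stack, and $\upop F\neq B$, since $\zero\in\upop F$ would force $\zero\in F$. Conversely, let $M$ be a non-empty stack different from $B$. Then $\zero\notin M$, because a stack containing $\zero$ is all of $B$. So $M\in\Kmax$ and $M=\upop M\in\scrS_{\Kmax}$. Thus $\scrS_{\Kmax}$ consists of the non-empty stacks other than $B$.

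The main obstacle is not any calculation but aligning this result with the displayed formula $\Stack(B)\setminus\{\emptyset\}$. The stack $B=\upop\zero$ is excluded from every stack system by \eqref{SS1}, so it cannot belong to the unity. Likewise, $\emptyset\subseteq\upop x$ is excluded from $\Smin$ by \eqref{SS0}. The proof therefore has to state explicitly how the displayed sets are to be read. In $\Smin$, $M$ ranges over non-empty stacks, matching the paper's convention that $\emptyset\notin\Stack(B)$. In $\Smax$, $B$ itself must also be removed, because \eqref{SS1} excludes it. With that reading fixed, both identifications above are immediate from the definitions of $\Kmin$ and $\Kmax$. Extremality then follows from Theorems~\ref{th:smallest-UC} and~\ref{th:largest-UC}, transported along the order isomorphism $\scrK\mapsto\SK$.
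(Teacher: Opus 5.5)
Your proposal is correct and follows the same route as the paper: you compute $\scrS_{\Kmin}$ and $\scrS_{\Kmax}$ directly from $\SK=\{\upop F\mid F\in\scrK\}$ and transport extremality along the order isomorphism of Theorem~\ref{th:UC-one-to-one-SS}. You are also right that the displayed formula for $\Smax$ cannot stand as written: the paper's proof asserts $\Kmax=2^{B}_{+}$, whereas $\Kmax$ excludes sets containing $\zero$, so the correct unity is $\Stack(B)\setminus\{\emptyset,B\}$ (and, likewise, \eqref{SS0} forces $\emptyset$ out of the displayed $\Smin$).
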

\begin{proof}
We have that
\[
\Kmin=\{M\subseteq B\mid\emptyset\neq M\subseteq\upop x,\text{ for some }x\in B^+\}.
\]
By the definition, $\scrS_{\Kmin}\defeq\{\upop F\mid F\in\Kmin\}$. So, the first equality follows by the order isomorphisms of the lattices of stack systems and ultracontacts. For the second one, recall that $\Kmax=2^{B}_{+}$, so by the definition of $\scrS_{\Kmax}$ we obtain that it consists of all non-empty stacks, and again, the second equality from the proposition follows from the order isomorphism of the two lattices.
\end{proof}

\section{Future work}\label{sec:future}

The most natural continuation of the work presented above concerns finding a~proper topological representation of ultracontact algebras in the spirit of \cite{Duntsch-et-al-RTBCA} and \cite{Dimov-et-al-CARBTSPA1} for Boolean contact algebras. Also, understanding a~deeper connection between ultracontacts and stack systems is a~topic worth studying.

Another interesting problem involves studying ultracontact in larger classes of algebraic structures, such as Heyting algebras or distributive lattices.

\section*{Acknowledgements}

This research was funded by the National Science Center (Poland), grant number 2020/39/B/HS1/00216, ``Logico-philosophical foundations of geometry and topology''. We would like to thank Paula Mench\'{o}n who contributed to the development of the ideas presented above.

\bibliographystyle{apalike}

\providecommand{\noop}[1]{}

\end{document}